\theoremstyle{plain}
\newtheorem{thm}{\protect\theoremname}
\newenvironment{lyxlist}[1]
	{\begin{list}{}
		{\settowidth{\labelwidth}{#1}
		 \setlength{\leftmargin}{\labelwidth}
		 \addtolength{\leftmargin}{\labelsep}
		 }}
	{\end{list}}
\theoremstyle{remark}
\newtheorem{rem}{\protect\remarkname}
\theoremstyle{plain}
\newtheorem{prop}{\protect\propositionname}
\providecommand{\theoremname}{Theorem}
\providecommand{\propositionname}{Proposition}
\providecommand{\remarkname}{Remark}
\providecommand{\theoremname}{Theorem}
\begin{document}
\title{Data-Driven Linearization of Dynamical Systems }
\author{George Haller\thanks{Corresponding author. Email: georgehaller@ethz.ch}
and Bálint Kaszás\\
 Institute for Mechanical Systems\\
 ETH Zürich\\
 Leonhardstrasse 21, 8092 Zürich, Switzerland\\
 }

\maketitle
Dynamic Mode Decomposition (DMD) and its variants, such as extended
DMD (EDMD), are broadly used to fit simple linear models to dynamical
systems known from observable data. As DMD methods work well in several
situations but perform poorly in others, a clarification of the assumptions
under which DMD is applicable is desirable. Upon closer inspection,
existing interpretations of DMD methods based on the Koopman operator
are not quite satisfactory: they justify DMD under assumptions that
hold only with probability zero for generic observables. Here, we
give a justification for DMD as a local, leading-order reduced model
for the dominant system dynamics under conditions that hold with probability
one for generic observables and non-degenerate observational data.
We achieve this for autonomous and for periodically forced systems
of finite or infinite dimensions by constructing linearizing transformations
for their dominant dynamics within attracting slow spectral submanifolds
(SSMs). Our arguments also lead to a new algorithm, data-driven linearization
(DDL), which is a higher-order, systematic linearization of the observable
dynamics within slow SSMs. We show by examples how DDL outperforms
DMD and EDMD on numerical and experimental data.

\section{Introduction\label{sec:DMD and EDMD}}

In recent years, there has been an overwhelming interest in devising
linear models for dynamical systems from experimental or numerical
data (see the recent review by \citet{schmid22}). This trend was
largely started by the Dynamic Mode Decomposition (DMD), put forward
in seminal work by \citet{schmid10}. The original exposition of the
method has been streamlined by various authors, most notably by \citet{rowley09}
and \citet{kutz16}.

To describe DMD, we consider an autonomous dynamical system 
\begin{equation}
\dot{\mathbf{x}}=\mathbf{f}(\mathbf{x}),\qquad\mathbf{x}\in\mathcal{\mathbb{R}}^{n},\qquad\mathbf{f}\in C^{1}\left(\mathcal{\mathbb{R}}^{n}\right),\label{eq:nonlinear system0}
\end{equation}
for some $n\in\mathbb{N}^{+}$. Trajectories $\left\{ \mathbf{x}(t;\mathbf{x}_{0})\right\} _{t\in\mathbb{R}}$
of this system evolve from initial conditions $\mathbf{x}_{0}$. The
flow map $\mathbf{F}^{t}\colon\mathcal{\mathbb{R}}^{n}\to\mathcal{\mathbb{R}}^{n}$
is defined as the mapping taking the initial trajectory positions
at time $t_{0}=0$ to current ones at time $t$, i.e., 
\begin{equation}
\mathbf{F}^{t}(\mathbf{x}_{0})=\mathbf{x}(t;\mathbf{x}_{0}).\label{eq:defintion of flow map}
\end{equation}

As observations of the full state space variable $\mathbf{x}$ of
system (\ref{eq:nonlinear system0}) are often not available, one
may try to explore the dynamical system (\ref{eq:nonlinear system0})
by observing $d$ smooth scalar functions $\phi_{1}(x),\ldots,\text{\ensuremath{\phi}}_{d}(x)$
along trajectories of the system. We order these scalar observables
into the observable vector 
\begin{equation}
\boldsymbol{\phi}(x)=\left(\begin{array}{c}
\phi_{1}(\mathbf{x})\\
\vdots\\
\phi_{d}(\mathbf{x})
\end{array}\right)\in C^{1}\left(\mathcal{\mathbb{R}}^{n}\right)\label{eq:definition of observables}
\end{equation}
The basic idea of DMD is to approximate the observed evolution of
$\boldsymbol{\phi}\left(\mathbf{F}^{t}(\mathbf{x}_{0})\right)$ of
the dynamical system with the closest fitting autonomous linear dynamical
system
\begin{equation}
\dot{\boldsymbol{\phi}}=\mathbf{L}\boldsymbol{\phi},\qquad\mathbf{L}\in\mathbb{R}^{d\times d},\label{eq:linear observable dynamics}
\end{equation}
based on available trajectory observations.

This is a challenging objective for multiple reasons. First, the original
dynamical system (\ref{eq:nonlinear system0}) is generally nonlinear
whose dynamics cannot be well approximated by a single linear system
on a sizable open domain. For instance, one may have several isolated,
coexisting attracting or repelling stationary states (such as periodic
orbits, limit cycles or quasiperiodic tori), which linear systems
cannot have. Second, it is unclear why the dynamics of $d$ observables
should be governed by a self-contained autonomous dynamical system
induced by the original system (\ref{eq:nonlinear system0}), whose
dimension is $n$. Third, the result of fitting system (\ref{eq:linear observable dynamics})
to observable data will clearly depend on the initial conditions used,
the number and the functional form of the observables chosen, as well
as on the objective function used in minimizing the fit.

Despite these challenges, we may proceed to find an appropriately
defined closest linear system (\ref{eq:linear observable dynamics})
based on available observable data. We assume that for some fixed
time step $\Delta t$, discrete observations of $m$ initial conditions,
$\mathbf{x}^{1}(t_{0}),\ldots,\mathbf{x}^{m}(t_{0})$, and their images
$\mathbf{F}^{\Delta t}(\mathbf{x}^{1}(t_{0})),\ldots,\mathbf{F}^{\Delta t}\left(\mathbf{x}^{m}(t_{0})\right)$,
under the sampled flow map $\mathbf{F}^{\Delta t}$ are available
in the data matrices 

\begin{equation}
\boldsymbol{\Phi}=\left[\boldsymbol{\phi}\left(\mathbf{x}^{1}(t_{0})\right),\ldots,\boldsymbol{\phi}\left(\mathbf{x}^{m}(t_{0})\right)\right],\qquad\hat{\boldsymbol{\Phi}}=\left[\boldsymbol{\phi}\left(\mathbf{F}^{\Delta t}(\mathbf{x}^{1}(t_{0}))\right),\ldots,\boldsymbol{\phi}\left(\mathbf{F}^{\Delta t}\left(\mathbf{x}^{m}(t_{0})\right)\right)\right],\label{eq:DMD data matrix definitions}
\end{equation}
respectively. We seek the best fitting linear system of the form (\ref{eq:linear observable dynamics})
for which
\begin{equation}
\hat{\boldsymbol{\Phi}}\approx\boldsymbol{\mathcal{D}}\boldsymbol{\Phi},\quad\boldsymbol{\mathcal{D}}=e^{\mathbf{L}\Delta t},\label{eq:approximation by DMD}
\end{equation}
holds. The eigenvalues of such a $\boldsymbol{\mathcal{D}}$ are usually
called DMD eigenvalues, and their corresponding eigenvectors are called
the DMD modes. 

Various norms can be chosen with respect to which the difference of
$\hat{\boldsymbol{\Phi}}$ and $\mathcal{\boldsymbol{\mathcal{D}}}\boldsymbol{\Phi}$
is to be minimized. The most straightforward choice is the Euclidean
matrix norm $\left|\,\cdot\,\right|$, which leads to the minimization
principle 
\begin{equation}
\boldsymbol{\mathcal{D}}^{*}=\underset{\boldsymbol{\mathcal{D}}\in\mathbb{R}^{d\times d}}{\mathrm{argmin}\left|\hat{\boldsymbol{\Phi}}-\boldsymbol{\mathcal{D}}\boldsymbol{\Phi}\right|^{2}}.\label{eq:general DMD problem}
\end{equation}
 An explicit solution to this problem is given by
\begin{equation}
\boldsymbol{\mathcal{D}}=\left(\hat{\boldsymbol{\Phi}}\boldsymbol{\Phi}^{\mathrm{T}}\right)\left(\hat{\boldsymbol{\Phi}}\boldsymbol{\Phi}^{\mathrm{T}}\right)^{\dagger},\label{eq:general solution of DMD problem}
\end{equation}
with the dagger referring to the pseudo-inverse of a matrix (see,
e.g., \citet{kutz16} for details). We note that the original formulation
of \citet{schmid10} is for discrete dynamical processes and assumes
observations of a single trajectory (see also \citet{rowley09}).

Among several later variants of DMD surveyed by \citet{schmid22},
the most broadly used one is the Extended Dynamic Mode Decomposition
(EDMD) of \citet{williams15}. This procedure seeks the best-fitting
linear dynamics for an a priori unknown set of functions $\mathbf{K}(\boldsymbol{\phi}(\mathbf{x}))$
of $\boldsymbol{\phi}(\mathbf{x})$, rather than for $\boldsymbol{\phi}(\mathbf{x})$
itself. In practice, one often chooses $\mathbf{K}$ as an $N(d,k)$-dimensional
vector of $d$-variate scalar monomials of order $k$ or less, where
$N(d,k)=\left(\begin{array}{c}
d+k\\
k
\end{array}\right)$ is the total number of all such monomials. The underlying assumption
of EDMD is that a self-contained linear dynamical system of the form
\begin{equation}
\frac{d}{dt}\mathbf{K}\left(\boldsymbol{\phi}(\mathbf{F}^{t}(\mathbf{x}_{0}))\right)=\mathbf{L}\mathbf{K}\left(\boldsymbol{\phi}(\mathbf{F}^{t}(\mathbf{x}_{0}))\right)\label{eq:EDMD in principle}
\end{equation}
 can be obtained on the feature space $\mathbb{R}^{N(d,k)}$ by optimally
selecting $\mathbf{L}\in\mathbb{R}^{N(d,k)\times N(d,k)}$. For physical
systems, the $N(d,k)$-dimensional ODE in eq. (\ref{eq:EDMD in principle})
defined on the feature space $\mathbb{R}^{N(d,k)}$ can be substantially
higher-dimensional than the $d$-dimensional ODE (\ref{eq:linear observable dynamics}).
In fact, $N(d,k)$ may be substantially higher than the dimension
$n$ of the phase space $\mathbb{R}^{n}$ of the original nonlinear
system (\ref{eq:nonlinear system0}). 

Once the function library used in EDMD is fixed, one again seeks to
choose $\mathbf{L}$ so that
\[
\mathbf{K}(\hat{\boldsymbol{\Phi}})\approx\boldsymbol{\mathcal{D}}\mathbf{K}(\boldsymbol{\Phi}^{\mathrm{}}),\quad\boldsymbol{\mathcal{D}}=e^{\mathbf{L}\Delta t}.
\]
This again leads to a linear optimization problem that can be solved
using linear algebra tools. For higher-dimensional systems, a kernel-based
version of EDMD was developed by \citet{williams15b}. This method
computes inner products necessary for EDMD implicitly, without requiring
an explicit representation of (polynomial) basis functions in the
space of observables. As a result, kernel-based EDMD operates at computational
costs comparable to those of the original DMD. 

\section{Prior justifications for DMD methods}

Available justifications for DMD (see \citet{rowley09}) and EDMD
(see \citet{williams15}) are based on the Koopman operator, whose
basics we review in Appendix \ref{subsec:Koopman} for completeness.
The argument starts with the observation that special observables
falling in invariant subspaces of this operator in the space of all
observables obey linear dynamics. Consequently, DMD should recover
the Koopman operator restricted to this subspace if the observables
are taken from such a subspace. 

In this sense, DMD is viewed as an approximate, continuous immersion
of a nonlinear system into an infinite dimensional linear dynamical
system. While such an immersion is not possible for typical nonlinear
systems with multiple limit sets (see \citet{liu23,liu24}), one still
hopes that this approximate immersion is attainable via DMD or EDMD
for nonlinear systems with a single attracting steady state that satisfies
appropriate nondegeneracy conditions (see \citet{kvalheim23}). In
that case, unlike classic local linearization near fixed points, the
linearization via DMD or EDMD is argued to be non-local, as it covers
the full domain of definition of Koopman eigenfunctions spanning the
underlying Koopman-invariant subspace.

However, Koopman eigenfunctions, whose existence, domain of definition
and exact form are a priori unknown for general systems, are notoriously
difficult--if not impossible--to determine accurately from data.
More importantly, even if Koopman-invariant subspaces of the observable
space were known, any countable set of generically chosen observables
would lie outside those subspaces with probability one. As a consequence,
DMD eigenvectors (which are generally argued to be approximations
of Koopman eigenfunctions and can be used to compute Koopman modes\footnote{Assuming that each coordinate component $x_{j}$ of the full phase
space vector $\mathbf{x}$ of system (\ref{eq:nonlinear system0})
falls in a span of the same set of Koopman eigenfunctions $\left\{ \phi_{1}(\mathbf{x}),\ldots,\phi_{N}(\mathbf{x})\right\} $,
one defines the $j^{th}$ Koopman mode associated with $x_{j}$ as
the vector $\mathbf{v}_{j}=\left(v_{1j},\dots,v_{Nj}\right)^{\mathrm{T}}$
for which $x_{j}=\sum_{i=1}^{N}v_{ij}\phi_{i}(x)$ (see, e.g., \citet{williams15}).}) would also lie outside Koopman-invariant subspaces, given that such
eigenvectors are just linear combinations of the available observables.
Consequently, practically observed data sets would fall under the
realm of Koopman-based explanation for DMD with probability zero.
This is equally true for EDMD, whose flexibility in choosing the function
set $\mathbf{K}(\boldsymbol{\phi}(\mathbf{x}))$ of observables also
introduces further user-dependent heuristics beyond the dimension
$d$ of the DMD. 

One may still hope that by enlarging the dimension $d$ of observables
in DMD and enlarging the function library $\mathbf{K}(\boldsymbol{\phi}(\mathbf{x}))$
in EDMD, the optimization involved in these methods brings DMD and
EDMD eigenvectors closer and closer to Koopman eigenfunctions. The
required enlargements, however, may mean hundreds or thousand of dimensions
even for dynamical system governed by simple, low-dimensional ODEs
(\citet{williams15b}). These enlargements succeed in fitting linear
systems closely to sets of observer trajectories, but they also unavoidably
lead to overfits that give unreliable predictions for initial conditions
not used in the training of DMD or EDMD. Indeed, the resulting large
linear systems can perform substantially worse in prediction than
much lower dimensional linear or nonlinear models obtained from other
data-driven techniques (see, e.g., \citet{alora23b}). 

Similar issues arise in justifying the kernel-based EDMD of \citet{williams15b}
based on the Koopman operator. Additionally, the choice of the kernel
function that represents the inner product of the now implicitly defined
polynomial basis functions remains heuristic and problem-dependent.
Again, the accuracy of the procedure is not guaranteed, as available
observer data is generically not in a Koopman eigenspace. As \citet{williams15b}
write: ``Like most existing data-driven methods, there is no guarantee
that the kernel approach will produce accurate approximations of even
the leading eigenvalues and modes, but it often appears to produce
useful sets of modes in practice if the kernel and truncation level
of the pseudoinverse are chosen properly.'' 

Finally, a lesser known limitation of the Koopman-based approach to
DMD is the limited domain in the phase space over which Koopman eigenfunctions
(and hence their corresponding invariant subspaces) are typically
defined in the observable space. Specifically, at least one principal
Koopman eigenfunction necessarily blows up near basin boundaries of
attracting and repelling fixed points and periodic orbits (see Proposition
\ref{prop:Koopman blow-up} of our Appendix \ref{subsec:Koopman-eigenfunctions}
for a precise statement and Theorem 3 of \citet{kvalheim23} for a
more general related result). 

Expansions of observables in terms of such blowing-up eigenfunctions
have even smaller domains of convergence, as was shown explicitly
in a simple example by \citet{page19}. This is a fundamental obstruction
to the often envisioned concept of global linear models built of different
Koopman eigenfunctions over multiple domains of attraction (see, e.g.,
\citet{williams15}, p. 1309). While it is broadly known that such
models would be discontinuous along basin boundaries (\citet{liu23,liu24,kvalheim23}),
it is rarely noted (see \citet{kvalheim23} for such a rare exception)
that such models would also generally blow up at those boundaries
and hence would become unmanageable even before reaching the boundaries.

For these reasons, an alternative mathematical foundation for DMD
is desirable. Ideally, such an approach should be defined on an equal
or lower dimensional space, rather than on higher or even infinite-dimensional
spaces, as suggested by the Koopman-based view on DMD. This should
help in avoiding overfitting and computational difficulties. Additionally,
an ideal treatment of DMD should also provide specific non-degeneracy
conditions on the underlying dynamical system, on the available observables,
and on the specific data to be used in the DMD procedure. 

In this paper, we develop a treatment of DMD that satisfies these
requirements. This enables to derive conditions for DMD to approximate
the dominant linearized observable dynamics near hyperbolic fixed
points and periodic orbits of finite- and infinite-dimensional dynamical
systems. 

Our approach to DMD also leads to a refinement of DMD which we call
data-driven linearization (or DDL, for short). DDL effectively carries
out exact local linearization via nonlinear coordinate changes on
a lower-dimensional attracting invariant manifold (spectral submanifold)
of the dynamical system. We illustrate the increased accuracy and
domain of validity of DDL models relative to those obtained from DMD
and EDMD on examples of autonomous and forced dynamical systems.

\section{A simple justification for the DMD algorithm \label{sec:DMD-in-phase-space}}

Here we give an alternative interpretation of DMD and EDMD as approximate
models for a dynamical system known through a set of observables.
The main idea (to be made more precise shortly) is to view DMD executed
on $d$ observables $\phi_{1},\ldots,\phi_{d}$ as a model reduction
tool that captures the leading-order dynamics of $n\geq d$ phase
space variables along a $d$-dimensional slow manifold in terms of
$\phi_{1},\ldots,\phi_{d}$. 

Such manifolds arise as slow spectral submanifolds (SSMs) under weak
non-degeneracy assumptions on the linearized spectrum at stable hyperbolic
fixed points of the $n$-dimensional dynamical system (see \citet{cabre03,haller16,cenedese22a}).
Specifically, a slow SSM $\mathcal{W}\left(E\right)$ is tangent to
the real eigenspace $E$ spanned by the $d$ slowest decaying linearized
modes at the fixed point. If $m$ sample trajectories $\left\{ \mathbf{x}^{j}(t)\right\} _{j=1}^{m}$
are released from a set of initial conditions $\left\{ \mathbf{x}^{j}(0)\right\} _{j=1}^{m}$at
time $t=0$, then due to their fast decay along the remaining fast
spectral subspace $F$ , the $\mathbf{x}^{j}(t)$ trajectories will
become exponentially close to $\mathcal{W}\left(E\right)$ by some
time $t=t_{0}\geq0$, and closely synchronize with its internal dynamics,
as seen in Fig. \ref{fig: geometry of DMD}.

\begin{figure}[H]
\centering{}\includegraphics[width=0.8\textwidth]{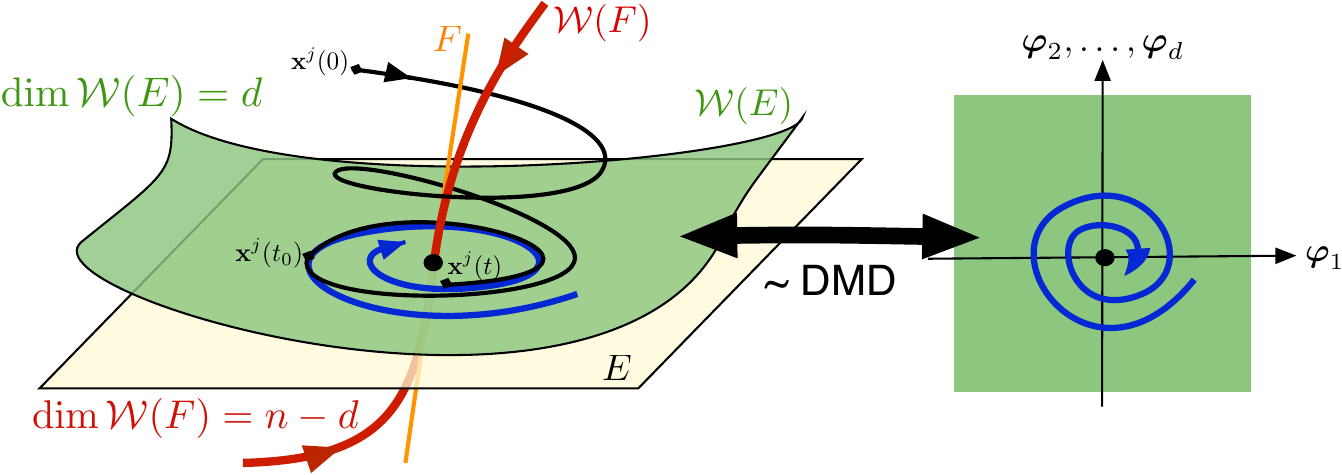}\caption{The geometric meaning of DMD performed on $d$ observables, $\phi_{1}(\mathbf{x}),\ldots,\phi_{d}(\mathbf{x})$,
after initial fast transients die out at an exponential rate in the
data. DMD then identifies the leading-order (linear) dynamics on a
$d$-dimensional attracting spectral submanifold (SSM) $\mathcal{W}\left(E\right)$
tangent to the $d$-dimensional slow spectral subspace $E$. These
linearized dynamics can be expressed in terms of the SSM-restricted
observables $\boldsymbol{\varphi}=\boldsymbol{\phi}\vert_{\mathcal{W}\left(E\right)}$.
Also shown is the spectral subspace $F$ of the faster decaying modes
and its associated nonlinear continuation, the fast spectral subspace
$\mathcal{W}\left(F\right)$. \label{fig: geometry of DMD}}
\end{figure}

If $\mathcal{W}\left(E\right)$ admits a non-degenerate parametrization
with respect to the observables $\phi_{1},\ldots,\phi_{d}$, then
one can pass to these observables as new coordinates in which to approximate
the leading order, linearized dynamics inside $\mathcal{W}\left(E\right)$.
Specifically, DMD executed over time the interval $\left[t_{0},t_{0}+\Delta t\right]$
provides the closest linear fit to the reduced dynamics of $\mathcal{W}\left(E\right)$
from the available observable histories $\phi_{1}\left(\mathbf{x}^{j}(t)\right),\ldots,\phi_{d}\left(\mathbf{x}^{j}(t)\right)$
for $t\in\left[t_{0},t_{0}+\Delta t\right]$, as illustrated in Fig.
\ref{fig: geometry of DMD}. This fit is a close representation of
the actual linearized dynamics on the SSM $\mathcal{W}\left(E\right)$
if the trajectory data $\left\{ \mathbf{x}^{j}(t)\right\} _{j=1}^{m}$
is sufficiently diverse for $t\geq t_{0}$. The resulting DMD model
with coefficient matrix $\mathcal{\boldsymbol{\mathcal{D}}}$ will
then be smoothly conjugate to the linearized reduced dynamics on $\mathcal{W}\left(E\right)$
with an error of the order of the distance of$\left\{ \mathbf{x}^{j}(t)\right\} _{j=1}^{m}$
from $\mathcal{W}\left(E\right)$ between the times $t_{0}$ and $t_{0}+\Delta t$. 

The slow SSM $\mathcal{W}\left(E\right)$ may also contain unstable
modes in applications. Similar results hold for that case as well,
provided that we select a small enough $\Delta t$, ensuring that
the trajectories $\left\{ \mathbf{x}^{j}(0)\right\} _{j=1}^{m}$are
not ejected from the vicinity of the fixed point origin for $t\in\left[t_{0},t_{0}+\Delta t\right]$.
As DMD will show sensitivity with respect to the choice of $t_{0}$
and $\Delta t$ in this case, we will not discuss the justification
of DMD near unstable fixed points beyond Remarks \ref{rem:unstable fixed points}
and \ref{rem:mixed mode SSM in infinite dimensions}.

In the following sections, we make this basic idea more precise both
for finite and infinite-dimensional dynamical systems. This approach
also reveals explicit, previously undocumented non-degeneracy conditions
on the underlying dynamical system, on the available observable functions
and on the specific data set used, under which DMD should give meaningful
results.

\subsection{Justification of DMD for continuous dynamical systems\label{subsec:Justification-of-DMD-cont-systems}}

We start by assuming that the observed dynamics take place in a domain
containing a fixed point, which is assumed to be at the origin, without
loss of generality, i.e., 
\begin{equation}
\mathbf{f}(\mathbf{0})=\mathbf{0}.\label{eq:hyperbolic fixed point}
\end{equation}
We can then rewrite the dynamical system \eqref{eq:nonlinear system0}
in the more specific form
\begin{equation}
\dot{\mathbf{x}}=\mathbf{Ax}+\tilde{\mathbf{f}}(\mathbf{x}),\qquad\mathbf{x}\in\mathcal{\mathbb{R}}^{n},\qquad\mathbf{A}=D\mathbf{f}(\mathbf{0}),\qquad\tilde{\mathbf{f}}(\mathbf{x})=o\left(\left|\mathbf{x}\right|\right),\label{eq:more specific nonlinear system}
\end{equation}
where the classic $o\left(\left|\mathbf{x}\right|\right)$ notation
refers to the fact that $\lim_{\mathbf{x}\to\mathbf{0}}\left[\left|\tilde{\mathbf{f}}(\mathbf{x})\right|/\left|\mathbf{x}\right|\right]=0$. 

Most expositions of DMD methods and their variants do not state assumption
(\ref{eq:hyperbolic fixed point}) explicitly and hence may appear
less restrictive than our treatment here. However, all of them implicitly
assume the existence of such a fixed point, as all of them end up
returning homogeneous linear ODEs or mappings with a fixed point at
the origin. Indeed, all known applications of these methods that produce
reasonable accuracy target the dynamics of ODEs or discrete maps near
their fixed points. 

Assumption (\ref{eq:hyperbolic fixed point}) can be replaced with
the existence of a limit cycle in the original system \eqref{eq:nonlinear system0},
in which case the first return map (or Poincaré map) defined near
the limit cycle will have a fixed point. We give a separate treatment
on justifying DMD as a linearization for such a Poincaré map in Section
\ref{subsec:Justification-of-DMD-disc-systems}. 

We do not advocate, however, the often used procedure of applying
DMD to fit a linear system to the flow map (rather than the Poincaré
map) near a stable limit cycle. Such a fit only produces the desired
limiting periodic behavior if one or more of the DMD eigenvalues are
artificially constrained to be on the complex unit circle by the user
of DMD. This renders the DMD model both structurally unstable and
conceptually inaccurate for prediction. Indeed, the model will approximate
the originally observed limit cycle and convergence to it only within
a measure zero, cylindrical set of its phase space. Outside this set,
all trajectories of the DMD model will converge to some other member
of the infinite family of periodic orbits or invariant tori within
the center subspace corresponding to the unitary eigenvalues. These
periodic orbits or tori have a continuous range of locations and amplitudes,
and hence represent spurious asymptotic behaviors that are not seen
in the original dynamical system \eqref{eq:nonlinear system0}.

In the special case of $d=n$ and for the special observable $\boldsymbol{\phi}(x)=\mathbf{x}$,
the near-linear form of eq. (\ref{eq:more specific nonlinear system})
motivates the DMD procedure because a linear approximation to the
system near $\mathbf{x}=\mathbf{0}$ seems feasible. It is a priori
unclear, however, to what extent the nonlinearities distort the linear
dynamics and how DMD would account for that. Additionally, in a data-driven
analysis, choosing the full phase space variable $\mathbf{x}$ as
the observable $\boldsymbol{\phi}(\mathbf{x})$ is generally unrealistic.
For these reasons, a mathematical justification of DMD requires further
assumptions, as we discuss next.

Let $\lambda_{1},\ldots,\lambda_{n}\in\mathbb{C}$ denote the eigenvalues
of $A$ and let $\mathbf{e}_{1},\ldots,\mathbf{e}_{n}\in\mathbb{C}^{n}$
denote the corresponding generalized eigenvectors. We assume that
at least one of the modes of the linearized system at $\mathbf{x}=\mathbf{0}$
decays exponentially and there is at least one other mode that decays
slower or even grows. More specifically, for some positive integer
$d<n$, we assume that the spectrum of $A$ can be partitioned as
\begin{equation}
\mathrm{Re}\text{\ensuremath{\lambda_{n}}}\leq\ldots\leq\mathrm{Re}\text{\ensuremath{\lambda_{d+1}} }<\mathrm{Re}\text{\ensuremath{\lambda_{d}} }\leq\ldots\leq\mathrm{Re}\text{\ensuremath{\lambda_{1}<0}. }\label{eq:eigenvalue configuration for E}
\end{equation}
This guarantees the existence of a $d$-dimensional, normally attracting\emph{
slow spectral subspace}
\begin{equation}
E=\mathrm{span\,\left\{ \mathrm{Re}\,\text{\ensuremath{\mathbf{e}_{1}},\,}\mathrm{Im\,}\text{\ensuremath{\mathbf{e}_{1}},}\ldots,\mathrm{Re\,}\text{\ensuremath{\mathbf{e}_{d}},\,}\mathrm{Im}\,\text{\ensuremath{\mathbf{e}_{d}}}\right\} }\label{eq:spectral subspace}
\end{equation}
for the linearized dynamics, with linear decay rate towards $E$ strictly
dominating all decay rates inside $E$.  Note that the set of vectors ${\mathrm{Re }\  \mathbf{e}_1, \mathrm{Im }\  \mathbf{e}_1, …, \mathrm{Re }\  \mathbf{e}_d, \mathrm{Im } \ \mathbf{e}_d}$ is, in general, not linearly independent, but they span a $d$-dimensional subspace. We also define the (real)
spectral subspace of faster decaying linear modes:
\begin{equation}
F=\mathrm{span\,\left\{ \mathrm{Re}\,\text{\ensuremath{\mathbf{e}_{d+1}},\,}\mathrm{Im\,}\text{\ensuremath{\mathbf{e}_{d+1}},}\ldots,\mathrm{Re\,}\text{\ensuremath{\mathbf{e}_{n}},\,}\mathrm{Im}\,\text{\ensuremath{\mathbf{e}_{n}}}\right\} }.\label{eq:spectral subspace-1}
\end{equation}

We will also use matrices containing the left and right eigenvectors
of the operator $\mathbf{A}$ and of its restrictions, $\mathbf{A}\vert_{E}$
and $\mathbf{A}\vert_{F}$, to its spectral subspaces $E$ and $F$,
respectively. Specifically, we let

\begin{align}
\mathbf{T} & =\left[\mathbf{T}_{E},\mathbf{T}_{F}\right],\qquad\mathbf{P}=\left[\begin{array}{c}
\mathbf{P}_{E}\\
\mathbf{P}_{F}
\end{array}\right],\label{eq:Tdef}
\end{align}
where the columns of $\mathbf{T}_{E}\in\mathbb{R}^{n\times d}$ are
the real and imaginary parts of the generalized right eigenvectors
of $\mathbf{A}\vert_{E}$ and the columns of $\mathbf{T}_{F}\in\mathbb{R}^{n\times(n-d)}$
are defined analogously for $\mathbf{A}\vert_{F}.$ Similarly, the
rows of $\mathbf{P}_{E}\in\mathbb{R}^{d\times n}$ are the real and
imaginary parts of the generalized left eigenvectors of $\mathbf{A}\vert_{E}$
and the rows of $\mathbf{P}_{F}\in\mathbb{R}^{(n-d)\times d}$ are
defined analogously for$\mathbf{A}\vert_{F}.$ Under assumption (\ref{eq:eigenvalue configuration for E}),
$F$ is always a fast spectral subspace, containing all trajectories
of the linearized system that decay faster to the origin as any trajectory
in $E.$ 

We will use the notation 
\begin{equation}
\mathbf{X}=\left[\mathbf{x}^{1}(t_{0}),\ldots,\mathbf{x}^{m}(t_{0})\right],\quad\hat{\mathbf{X}}=\left[\mathbf{F}^{\Delta t}(\mathbf{x}^{1}(t_{0})),\ldots,\mathbf{F}^{\Delta t}\left(\mathbf{x}^{m}(t_{0})\right)\right]\label{eq:trajectory data matrices}
\end{equation}
for trajectory data in the underlying dynamical system (\eqref{eq:nonlinear system0})
on which the observable data matrices $\boldsymbol{\Phi}$ and $\hat{\boldsymbol{\Phi}}$
defined in (\ref{eq:DMD data matrix definitions}) are defined. In
truly data-driven applications, the matrices $\mathbf{X}$ and $\hat{\mathbf{X}}$
are not known. We will nevertheless use them to make precise statements
about a required dominance of the slow linear modes of $E$ in the
available data. Such a dominance will arise for generic initial conditions
if one selects the initial conditions $\mathbf{x}^{1}(t_{0}),\ldots,\mathbf{x}^{m}(t_{0})$
after initial fast transients along $F$ have died out. This can be
practically achieved by initializing $\mathbf{x}^{1}(t_{0}),\ldots,\mathbf{x}^{m}(t_{0})$
after a linear spectral analysis of the observable data matrix $\boldsymbol{\Phi}$
only returns a number of dominant frequencies consistent with a $d$-dimensional
SSM.

We now state a theorem that provides a general justification for the
DMD procedure under explicit nondegeneracy conditions and with specific
error bounds. Specifically, we give a minimal set of conditions under
which DMD can be justified as an approximate, leading-order, $d$-dimensional
reduced-order model for an nonlinear system of dimension $n\geq d$
near its fixed point. Based on relevance for applications, we only
state Theorem \ref{thm: DMD} for stable hyperbolic fixed points,
but discuss subsequently in Remark \ref{rem:unstable fixed points}
its extension to unstable fixed points.
\begin{thm}
\emph{\label{thm: DMD}{[}}\textbf{\emph{Justification of DMD for
ODEs with stable hyperbolic fixed points{]}}}\emph{ Assume that}
\end{thm}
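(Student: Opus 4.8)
The plan is to nest three structures — an attracting slow SSM $\mathcal{W}(E)$, a linearizing coordinate change for its internal dynamics, and the observable parametrization — and then to show that the least-squares solution \eqref{eq:general solution of DMD problem} recovers the leading-order linear map in these composed coordinates. First I would invoke the SSM existence results (\citet{cabre03,haller16,cenedese22a}) under the spectral-gap condition \eqref{eq:eigenvalue configuration for E} to obtain a smooth invariant manifold $\mathcal{W}(E)$, tangent to $E$ at the origin, writable locally as a graph $\mathbf{x}=\mathbf{W}(\mathbf{s})$ over $E$ with $D\mathbf{W}(\mathbf{0})$ spanning $E$. The same theory supplies the attraction estimate that does the real work: the strict inequality $\mathrm{Re}\,\lambda_{d+1}<\mathrm{Re}\,\lambda_d$ makes $\mathcal{W}(E)$ normally attracting, so that each trajectory in \eqref{eq:trajectory data matrices} satisfies $\mathrm{dist}(\mathbf{x}^j(t),\mathcal{W}(E))\le Ce^{-\rho t}$ for $t\ge t_0$, with $\rho$ fixed by the gap. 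This quantifies "exponentially close to $\mathcal{W}(E)$" and provides the small parameter $\delta:=\max_j\sup_{t\in[t_0,t_0+\Delta t]}\mathrm{dist}(\mathbf{x}^j(t),\mathcal{W}(E))$ in which the final error will be expressed.

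Next I would reduce and linearize the internal dynamics. Substituting the graph into \eqref{eq:more specific nonlinear system} yields a reduced ODE $\dot{\mathbf{s}}=\mathbf{A}|_E\mathbf{s}+\mathbf{r}(\mathbf{s})$ with $\mathbf{r}(\mathbf{s})=o(|\mathbf{s}|)$. Under nonresonance conditions on $\lambda_1,\ldots,\lambda_d$, which hold for generic spectra and which I expect among the theorem's hypotheses, a smooth Sternberg/Poincar\'e--Dulac linearization $\mathbf{s}=\mathbf{h}(\mathbf{y})$ conjugates this reduced flow to the linear flow $\mathbf{y}(t)=e^{\mathbf{A}|_E t}\mathbf{y}(0)$. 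Composing with the restricted observable map $\boldsymbol{\varphi}(\mathbf{s})=\boldsymbol{\phi}(\mathbf{W}(\mathbf{s}))$ and using the non-degeneracy of the parametrization, namely invertibility of $D\boldsymbol{\varphi}(\mathbf{0})=D\boldsymbol{\phi}(\mathbf{0})D\mathbf{W}(\mathbf{0})$ (the observable hypothesis I anticipate), I obtain a local diffeomorphism $\mathbf{g}:=\boldsymbol{\varphi}\circ\mathbf{h}$ with invertible Jacobian $D\mathbf{g}(\mathbf{0})=D\boldsymbol{\varphi}(\mathbf{0})$, in which the time-$\Delta t$ observable flow restricted to $\mathcal{W}(E)$ reads $\boldsymbol{\varphi}\mapsto\mathbf{g}\bigl(e^{\mathbf{A}|_E\Delta t}\,\mathbf{g}^{-1}(\boldsymbol{\varphi})\bigr)$.

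I would then show that DMD recovers the leading-order part of this map. Expanding the restricted flow map to first order about the fixed point gives $\hat{\boldsymbol{\varphi}}=\boldsymbol{\mathcal{D}}_0\,\boldsymbol{\varphi}+O(|\boldsymbol{\varphi}|^2)$ with $\boldsymbol{\mathcal{D}}_0:=D\mathbf{g}(\mathbf{0})\,e^{\mathbf{A}|_E\Delta t}\,D\mathbf{g}(\mathbf{0})^{-1}$, which is by construction linearly conjugate to $e^{\mathbf{A}|_E\Delta t}$ and hence carries exactly the slow DMD eigenvalues $e^{\lambda_1\Delta t},\ldots,e^{\lambda_d\Delta t}$. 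The off-SSM part of the data perturbs the columns of $\boldsymbol{\Phi}$ and $\hat{\boldsymbol{\Phi}}$ by $O(\delta)$. Feeding this into the least-squares solution \eqref{eq:general solution of DMD problem}, written in the corrected form $\boldsymbol{\mathcal{D}}=(\hat{\boldsymbol{\Phi}}\boldsymbol{\Phi}^{\mathrm{T}})(\boldsymbol{\Phi}\boldsymbol{\Phi}^{\mathrm{T}})^{\dagger}$, and using the data non-degeneracy hypothesis (full row rank and good conditioning of $\boldsymbol{\Phi}$, so that the pseudo-inverse is bounded), I would conclude $\boldsymbol{\mathcal{D}}=\boldsymbol{\mathcal{D}}_0+O(\delta)+O(a)$, where $a$ bounds the on-SSM trajectory amplitudes controlling the discarded quadratic terms. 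This establishes that $\boldsymbol{\mathcal{D}}$ is smoothly conjugate to the linearized reduced dynamics, with error of the claimed order.

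The hard part will be this last step: controlling how the two distinct error sources propagate through the pseudo-inverse $(\hat{\boldsymbol{\Phi}}\boldsymbol{\Phi}^{\mathrm{T}})(\boldsymbol{\Phi}\boldsymbol{\Phi}^{\mathrm{T}})^{\dagger}$ — the $O(\delta)$ normal deviation and the $O(a)$ neglected nonlinearity — and making the data non-degeneracy condition precise enough that the bound on $(\boldsymbol{\Phi}\boldsymbol{\Phi}^{\mathrm{T}})^{\dagger}$ is uniform over the sampling. A secondary subtlety is that linear conjugacy of two matrices is equivalent to similarity, so I must verify that $\boldsymbol{\mathcal{D}}_0$ is genuinely \emph{similar} to $e^{\mathbf{A}|_E\Delta t}$ — which it is, through the explicit similarity $D\mathbf{g}(\mathbf{0})$ — rather than merely topologically conjugate; this is precisely what upgrades the conclusion from "same stability type" to the recovery of the correct DMD eigenvalues and modes.
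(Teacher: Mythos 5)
Your overall architecture---attracting slow manifold, linearization of the internal dynamics, observable parametrization, then perturbation of the least-squares solution (and you are right that eq. (\ref{eq:general solution of DMD problem}) should read $(\hat{\boldsymbol{\Phi}}\boldsymbol{\Phi}^{\mathrm{T}})(\boldsymbol{\Phi}\boldsymbol{\Phi}^{\mathrm{T}})^{\dagger}$, which is the form the paper's appendix actually uses)---matches the paper's. But there is a genuine gap: you import hypotheses that Theorem \ref{thm: DMD} does not make. The theorem assumes only $\mathbf{f}\in C^{2}$ and a stable hyperbolic fixed point with the spectral gap (\ref{eq:eigenvalue configuration for E}); it imposes \emph{no} nonresonance conditions. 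Your step ``under nonresonance conditions \ldots which I expect among the theorem's hypotheses'' invokes Sternberg/Poincar\'e--Dulac linearization, which requires exactly such conditions plus higher smoothness, and your SSM-existence step via the parametrization method of \citet{cabre03,haller16} likewise requires nonresonant spectral subspaces and $C^{r}$ regularity with $r$ large. The paper's proof avoids both by a single tool you do not use: Hartman's 1960 theorem, which for $C^{2}$ systems with a stable hyperbolic fixed point gives a near-identity linearizing map $\mathbf{x}=\mathbf{y}+\mathbf{h}(\mathbf{y})$ with $D\mathbf{h}$ H\"older, so $\mathbf{h},\mathbf{h}^{-1}=\mathcal{O}\left(\left|\mathbf{y}\right|^{1+\beta}\right)$, with no nonresonance whatsoever. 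This delivers your first two structures at once: $\mathcal{W}(E)=\left(\mathbf{I}+\mathbf{h}\right)^{-1}(E)$ is the SSM (only $C^{1}$, which is precisely why the theorem's conclusion states $\mathcal{W}(E)\in C^{1}$), and in the linearized $(\boldsymbol{\xi},\boldsymbol{\eta})$ coordinates the reduced dynamics are \emph{exactly} $\dot{\boldsymbol{\xi}}=\boldsymbol{\Lambda}_{E}\boldsymbol{\xi}$, so no separate internal linearization step exists or is needed. As written, your argument proves a strictly weaker theorem than the one claimed; the deliberate point of Theorem \ref{thm: DMD} (cf. Remark \ref{rem:higher smoothness under nonresonance}) is to justify DMD without knowledge of resonances in the spectrum.

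The second, related problem is the error bookkeeping in your final step, which you defer as ``the hard part.'' Under the actual hypotheses the conjugacy is only $C^{1,\beta}$, so the remainder in the observable expansion is $\mathcal{O}\left(\left|\left(\boldsymbol{\xi},\boldsymbol{\eta}\right)\right|^{1+\beta}\right)$, not the quadratic $\mathcal{O}(|\boldsymbol{\varphi}|^{2})$ you posit; your $O(a)$ control of ``discarded quadratic terms'' silently relies on the same unwarranted smoothness. More importantly, the pseudo-inverse amplifies all perturbations by $|\boldsymbol{\Xi}|^{-2}$ through $\left(\boldsymbol{\Xi}\boldsymbol{\Xi}^{\mathrm{T}}\right)^{-1}$, and it is precisely the quantitative half of (A4), $\left|\mathbf{P}_{F}\mathbf{X}\right|,\left|\mathbf{P}_{F}\hat{\mathbf{X}}\right|\leq\left|\mathbf{P}_{E}\mathbf{X}\right|^{1+\beta}$---which your proposal never invokes---that tames this amplification: terms of the form $|\boldsymbol{\Xi}|^{-2}|\boldsymbol{H}|^{2}$, $|\boldsymbol{\Xi}|^{-2}|\boldsymbol{H}|\left|\left(\boldsymbol{\Xi},\boldsymbol{H}\right)\right|^{1+\beta}$ and $|\boldsymbol{\Xi}|^{-2}\left|\left(\boldsymbol{\Xi},\boldsymbol{H}\right)\right|^{2+2\beta}$ then collapse to the stated $\mathcal{O}\left(\left|\mathbf{P}_{E}\mathbf{X}\right|^{\beta}\right)$. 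Your qualitative substitutes ($\delta$ from normal attraction, ``good conditioning of $\boldsymbol{\Phi}$'') do not couple the off-manifold deviation to the on-manifold data amplitude, so they cannot produce the theorem's error order; to close the argument you must either adopt (A4) as stated or prove an equivalent amplitude-dependent bound.
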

\begin{lyxlist}{00.00.0000}
\item [{\emph{(A1)}}] \emph{The $\mathbf{x}=\mathbf{0}$ is a stable hyperbolic
fixed point of system }\eqref{eq:more specific nonlinear system}
with a spectral gap, i.e., the spectrum $\mathrm{Spect}\left[\mathbf{A}\right]$
satisfies eq. (\ref{eq:eigenvalue configuration for E}). 
\item [{\emph{(A2)}}] \emph{$\mathbf{f}\in C^{2}$ in a neighborhood of
the origin.}
\item [{\emph{(A3)}}] \emph{For some integer $d\in\left[1,n\right]$, a
$d$-dimensional observable function $\boldsymbol{\phi}\in C^{2}$
and the $d$-dimensional slow spectral subspace $E$ of the hyperbolic
fixed point $\mathbf{x}=\mathbf{0}$ of system (}\ref{eq:more specific nonlinear system}\emph{)
satisfy the non-degeneracy condition 
\begin{equation}
\mathrm{rank\,}\left[D\boldsymbol{\phi}\left(\mathbf{0}\right)\vert_{E}\right]=d.\label{eq:nondegeneracy of observables for DMD1}
\end{equation}
}
\item [{\emph{(A4)}}] \emph{The data matrices $\boldsymbol{\Phi}$ and
$\hat{\boldsymbol{\Phi}}$ are non-degenerate and the initial conditions
in $\mathbf{X}$ and $\hat{\mathbf{X}}$ have been selected after
fast transients from the modes outside $E$ have largely died out,
i.e., 
\begin{equation}
\mathrm{rank_{\mathrm{row}}\,}\boldsymbol{\Phi}=d,\qquad\left|\mathbf{P}_{F}^{\mathrm{}}\mathbf{X}\right|,\left|\mathbf{P}_{F}^{\mathrm{}}\hat{\mathbf{X}}\right|\leq\left|\mathbf{P}_{E}^{\mathrm{}}\mathbf{X}\right|^{1+\beta},\label{eq:nondegeneracy assumption on data1}
\end{equation}
for some $\beta\in(0,1]$.}
\end{lyxlist}
\emph{Then the DMD computed from $\boldsymbol{\Phi}$ and $\hat{\boldsymbol{\Phi}}$
yields a matrix $\boldsymbol{\mathcal{D}}$ that is locally topologically
conjugate with order $\mathcal{O}\left(\left|\mathbf{P}_{E}^{\mathrm{}}\mathbf{X}\right|^{\beta}\right)$
error to the linearized dynamics on a $d$-dimensional, slow attracting
spectral submanifold} $\mathcal{W}(E)\in C^{1}$ tangent to $E$ at
$\mathbf{x}=\mathbf{0}$\emph{. Specifically, we have 
\begin{equation}
\boldsymbol{\mathcal{D}}=D\boldsymbol{\phi}(\mathbf{0})\mathbf{T}_{E}e^{\mathbf{\Lambda}_{E}\Delta t}\left(D\boldsymbol{\phi}(\mathbf{0})\mathbf{T}_{E}\right)^{-1}+\mathcal{O}\left(\left|\mathbf{P}_{E}^{\mathrm{}}\mathbf{X}\right|^{\beta}\right).\label{eq:D matrix from data1}
\end{equation}
}
\begin{proof}
Under assumptions (A1) and (A2), any trajectory in a neighborhood
of the origin in the nonlinear system \eqref{eq:more specific nonlinear system}
converges at an exponential rate $e^{\mathrm{Re}\text{\ensuremath{\lambda_{d+1}} }t}$
to a $d$-dimensional attracting spectral submanifold $\mathcal{W}(E)$
tangent to a $d$-dimensional attracting slow spectral subspace $E$
of the linearized system at the origin. This follows from the $C^{1}$
linearization theorem of \citet{hartman60}, which is applicable to
$C^{2}$ dynamical systems with a stable hyperbolic fixed point. Under
assumption (A3), the $d$-dimensional observable function $\boldsymbol{\phi}(\mathbf{x})$
restricted to $\mathcal{W}(E)$ can be used to parametrize $\mathcal{W}(E)$
near the origin, and hence a $d$-dimensional, self-contained nonlinear
dynamical system can be written down for the restricted observable
$\boldsymbol{\varphi}=\boldsymbol{\phi}\vert_{\mathcal{W}(E)}$ along
$\mathcal{W}(E)$. Under the first assumption in (A4), the available
observational data matrices $\boldsymbol{\Phi}$ and $\hat{\boldsymbol{\Phi}}$
are rich enough to characterize the reduced dynamics on $\mathcal{W}(E)$.
Under the second assumption in (A4), transients from the faster modes
outside $E$ have largely died out before the selection of the initial
conditions in $\mathbf{X}$, so that the linear part of the dynamics
on $\mathcal{W}(E)$ can be approximately inferred from $\boldsymbol{\Phi}$
and $\hat{\boldsymbol{\Phi}}$. In that case, up to an error proportional
to the distance of the training data from $\mathcal{W}(E)$, the matrix
$\boldsymbol{\mathcal{D}}\in\mathbb{R}^{d\times d}$ returned by DMD
is similar to the time-$\Delta t$ flow map of the linearized flow
of the underlying dynamical system restricted to $\mathcal{W}(E)$.
This linearized flow then acts as a local reduced-order model with
which nearby trajectory observations synchronize exponentially fast
in the observable space. We give a more detailed proof of the theorem
in Appendix \eqref{sec: Proof of Theorem 1}. 
\end{proof}
\begin{rem}
\label{rem:unstable fixed points}In Theorem \ref{thm: DMD}, we can
replace assumption (A1) with
\begin{equation}
\mathrm{Re}\text{\ensuremath{\lambda_{n}}}\leq\ldots\leq\mathrm{Re}\text{\ensuremath{\lambda_{d+1}} }<0,\qquad\mathrm{Re}\text{\ensuremath{\lambda_{d+1}} }<\mathrm{Re}\text{\ensuremath{\lambda_{d}} }\leq\ldots\leq\mathrm{Re}\text{\ensuremath{\lambda_{1}}, }\quad\mathrm{Re}\text{\ensuremath{\lambda_{j}\neq0,\quad j=1,\ldots,n.} }\label{eq:eigenvalue configuration for E-2}
\end{equation}
This means that the $\mathbf{x}=\mathbf{0}$ fixed point is only assumed
hyperbolic with a spectral gap and $\mathbf{A}$ has an attracting
$d$-dimensional spectral subspace $E$ that possibly contains some
instabilities, i.e., eigenvalues with positive real parts. Then the
statements of Theorem \ref{thm: DMD} still hold, but $\mathcal{W}(E)$
will only be guaranteed $C^{1}$ at $\mathbf{x}=\mathbf{0}$ and Hölder-continuous
at other points near the fixed point. This follows by replacing the
linearization theorem of \citet{hartman60} with that of \citet{vanstrien90},
which still enables us to use eq. (\ref{eq:Hartman linearization})
in the proof. Therefore, slow subspaces $E$ containing a mixture
of stable and unstable modes can also be allowed, as long as $F$
contains only fast modes consistent with the splitting assume in eq.
(\ref{eq:eigenvalue configuration for E}). In that case, however,
the time $t_{0}+\Delta t$ must be chosen carefully to ensure that
$\left|\mathbf{P}_{F}^{\mathrm{}}\hat{\mathbf{X}}\right|\leq\left|\mathbf{P}_{E}^{\mathrm{}}\mathbf{X}\right|^{1+\beta}$
still holds, i.e., the data used in DMD still samples a neighborhood
of the origin.
\end{rem}
\begin{rem}
In related work, \citet{Bollt_2018} construct the transformation
relating a pair of conjugate dynamical systems based on a limited
set of matching Koopman eigenfunctions, which are either known explicitly
or constructed from EDMD with dictionary learning (EDMD-DL; see \citet{Li_2017}).
In principle, this could be used to construct linearizing transformation
as well. However, even when the eigenfunctions are approximated from
data, the approach assumes that the linearized system, as well as
a linearized trajectory and its preimage under the linearization,
are available. As these assumptions are not satisfied in practice,
only very simple and low-dimensional analytic examples are treated
by \citet{Bollt_2018}.
\end{rem}
In Appendix \eqref{sec: Proof of Theorem 1}, Remarks \ref{rem:oscillatory modes}-\ref{rem:higher smoothness under nonresonance}
summarize technical points on the application and possible further
extensions of Theorem \ref{thm: DMD}. In practice, Theorem \ref{thm: DMD}
provides previously unspecified non-degeneracy conditions on the linear
part of the dynamical system to be analyzed via DMD (assumption (A1)),
on the regularity of the nonlinear part of the system (assumption
(A2)), on the type of observables available for the analysis (assumption
(A3)) and on the specific observable data used in the analysis (assumption
(A4)). The latter assumption requires that there have to be at least
as many independent observations in time as observables. This specifically
excludes the popular use of tall $\boldsymbol{\Phi}$ observable data
matrices which provide more free parameters to pattern-match observational
data but will also lead to an overfit that diminishes the predictive
power of the DMD model on initial conditions not used in its training.

To illustrate these points, we demonstrate the necessity of assumptions
(A2)-(A4) of Theorem \ref{thm: DMD} in Appendix \ref{sec:Necessity-of-assumptions}
on simple examples. 

\subsection{Justification of DMD for discrete and for time-periodic continuous
dynamical systems\label{subsec:Justification-of-DMD-disc-systems}}

The linearization results we have applied to deduce Theorem \ref{thm: DMD}
are equally valid for discrete dynamical systems defined by iterated
mappings. Such mappings are of the form
\begin{equation}
\mathbf{x}_{n+1}=\mathbf{f}(\mathbf{x}_{n})=\mathbf{A}\mathbf{x}_{n}+\tilde{\mathbf{f}}(\mathbf{x}_{n}),\qquad\mathbf{x}_{j}\in\mathbb{R}^{n},\qquad\mathbf{A}\in\mathbb{R}^{n\times n},\qquad\tilde{\mathbf{f}}(\mathbf{x})=o\left(\left|\mathbf{x}\right|\right).\label{eq:discrete dynamical system}
\end{equation}
 We will use a similar ordering for the eigenvalues of $\mathbf{A}$
as in the continuous time case:

\begin{equation}
\left|\text{\ensuremath{\lambda_{n}}}\right|\leq\ldots\leq\left|\text{\ensuremath{\lambda_{d+1}} }\right|<\left|\text{\ensuremath{\lambda_{d}} }\right|\leq\ldots\leq\left|\text{\ensuremath{\lambda_{1}} }\right|<1.\label{eq:eigenvalue configuration for E-1}
\end{equation}
 As in the continuous time case, we will use the observable data matrices 

\begin{equation}
\boldsymbol{\Phi}=\boldsymbol{\phi}\left(\mathbf{X}\right),\qquad\hat{\boldsymbol{\Phi}}=\boldsymbol{\phi}\left(\mathbf{f}\left(\mathbf{X}\right)\right),\label{eq:DMD data matrix definitions-1}
\end{equation}
with the initial conditions for the map $\mathbf{f}$ stored in $\mathbf{X}$. 

With these ingredients, we need only minor modifications in the assumptions
of the theorems that account for the usual differences between the
spectrum of an ODE and a map.
\begin{thm}
\emph{\label{thm:DMD-maps}{[}}\textbf{\emph{Justification of DMD
for maps with stable hyperbolic fixed points}}\emph{{]} Assume that }
\end{thm}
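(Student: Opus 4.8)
The plan is to mirror the proof of Theorem~\ref{thm: DMD} step by step, substituting the discrete-time analogues of each tool used in the continuous-time argument. The only structural differences stem from the spectral configuration~\eqref{eq:eigenvalue configuration for E-1}, in which stability and the spectral gap are expressed through the moduli $|\lambda_j|$ rather than the real parts $\mathrm{Re}\,\lambda_j$, and from the fact that the linearized reduced dynamics on $E$ is now the single linear map $\mathbf{A}\vert_{E}$ rather than a time-$\Delta t$ flow map. Consequently the factor $e^{\mathbf{\Lambda}_{E}\Delta t}$ appearing in~\eqref{eq:D matrix from data1} is replaced by $\mathbf{\Lambda}_{E}$, the real canonical representation of $\mathbf{A}\vert_{E}$, with all assumptions (A1)--(A4) reinterpreted for the map~\eqref{eq:discrete dynamical system} and the data matrices~\eqref{eq:DMD data matrix definitions-1}.

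First I would establish, under the discrete analogues of (A1)--(A2), the existence of a $d$-dimensional $C^{1}$ attracting slow spectral submanifold $\mathcal{W}(E)$ tangent to $E$ at the origin. This is the discrete-time instance of the invariant-manifold results of \citet{cabre03,haller16,cenedese22a}, which apply to $C^{2}$ diffeomorphisms with a hyperbolic fixed point once the spectral gap $|\lambda_{d+1}|<|\lambda_{d}|$ is read multiplicatively. Orbits in a neighborhood of the origin then converge to $\mathcal{W}(E)$ at the geometric rate $|\lambda_{d+1}|^{n}$, the discrete counterpart of the exponential rate $e^{\mathrm{Re}\,\lambda_{d+1}t}$ used in Theorem~\ref{thm: DMD}.

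Next I would invoke the discrete-time $C^{1}$ linearization theorem---the map counterpart of the result of \citet{hartman60}, valid for $C^{2}$ diffeomorphisms with a stable hyperbolic fixed point---to obtain a topological conjugacy between $\mathbf{f}$ and its linear part $\mathbf{A}$ that is differentiable at the origin. Restricting this conjugacy to $\mathcal{W}(E)$ shows that the reduced map on $\mathcal{W}(E)$ is locally topologically conjugate to $\mathbf{A}\vert_{E}$. Under the analogue of (A3), the nondegeneracy~\eqref{eq:nondegeneracy of observables for DMD1} makes $D\boldsymbol{\phi}(\mathbf{0})\mathbf{T}_{E}$ invertible, so $\boldsymbol{\varphi}=\boldsymbol{\phi}\vert_{\mathcal{W}(E)}$ furnishes valid local coordinates and a self-contained reduced map in the observable space, whose linearization at the origin is precisely $D\boldsymbol{\phi}(\mathbf{0})\mathbf{T}_{E}\,\mathbf{\Lambda}_{E}\,(D\boldsymbol{\phi}(\mathbf{0})\mathbf{T}_{E})^{-1}$. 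Finally, under the analogue of (A4), the row-rank condition guarantees that $\boldsymbol{\Phi}$ samples the reduced coordinates richly enough, and the transient bound in~\eqref{eq:nondegeneracy assumption on data1} guarantees that the data lie within $|\mathbf{P}_{E}\mathbf{X}|^{1+\beta}$ of $\mathcal{W}(E)$; the least-squares DMD fit~\eqref{eq:general solution of DMD problem} then returns the claimed $\boldsymbol{\mathcal{D}}$.

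I expect the main obstacle to be identical to that of Theorem~\ref{thm: DMD}: quantifying how the off-manifold data component, bounded by $|\mathbf{P}_{F}\hat{\mathbf{X}}|\le|\mathbf{P}_{E}\mathbf{X}|^{1+\beta}$, propagates through the pseudo-inverse in~\eqref{eq:general solution of DMD problem} to yield the $\mathcal{O}(|\mathbf{P}_{E}\mathbf{X}|^{\beta})$ error in $\boldsymbol{\mathcal{D}}$. Since the data matrices~\eqref{eq:DMD data matrix definitions-1} and the linearization machinery are the discrete analogues of those treated in Appendix~\eqref{sec: Proof of Theorem 1}, I anticipate that the estimate carries over with only the replacement of exponential by geometric decay rates, so the full argument can be recorded as a short set of modifications to the proof of Theorem~\ref{thm: DMD} rather than repeated in full.
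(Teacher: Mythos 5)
Your proposal is correct and matches the paper's argument exactly: the paper proves Theorem \ref{thm:DMD-maps} by declaring the proof identical to that of Theorem \ref{thm: DMD}, with the sole substitution of the discrete-time version of the linearization theorem of \citet{hartman60} for stable hyperbolic fixed points of maps, which is precisely the reduction you carry out (including the replacement of $e^{\mathbf{\Lambda}_{E}\Delta t}$ by $\mathbf{\Lambda}_{E}$ and of exponential by geometric decay rates). Your observation that the error propagation through the pseudo-inverse in Appendix \ref{sec: Proof of Theorem 1} carries over unchanged is also exactly what the paper relies on.
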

\begin{lyxlist}{00.00.0000}
\item [{(A1)}] \emph{$\mathbf{x}=\mathbf{0}$ is a stable hyperbolic fixed
point of system }(\ref{eq:discrete dynamical system}), i.e., assumption
(\ref{eq:eigenvalue configuration for E-1}) holds.
\item [{\emph{(A2)}}] \emph{In eq. (}\ref{eq:discrete dynamical system}),\emph{
$\tilde{\mathbf{f}}\in C^{2}$ holds in a neighborhood of the origin.}
\item [{\emph{(A3)}}] \emph{For some integer $d\in\left[1,n\right]$, a
$d$-dimensional observable function $\boldsymbol{\phi}\in C^{2}$
and the $d$-dimensional slow spectral subspace $E$ of the hyperbolic
fixed point $\mathbf{x}=\mathbf{0}$ of system (}\ref{eq:discrete dynamical system}\emph{)
satisfy the non-degeneracy condition 
\begin{equation}
\mathrm{rank\,}\left[D\boldsymbol{\phi}\left(\mathbf{0}\right)\vert_{E}\right]=d.\label{eq:nondegeneracy of observables for DMD1-1}
\end{equation}
}
\item [{\emph{(A4)}}] \emph{The data matrices $\boldsymbol{\Phi}$ and
$\hat{\boldsymbol{\Phi}}$ collected from iterations of system (}\ref{eq:discrete dynamical system}\emph{)
are non-degenerate and are dominated by data near $E$, i.e., 
\begin{equation}
\mathrm{rank_{\mathrm{row}}\,}\boldsymbol{\Phi}=d,\qquad\left|\mathbf{P}_{F}^{\mathrm{}}\mathbf{X}\right|,\left|\mathbf{P}_{F}^{\mathrm{}}\hat{\mathbf{X}}\right|\leq\left|\mathbf{P}_{E}^{\mathrm{}}\mathbf{X}\right|^{1+\beta},\label{eq:nondegeneracy assumption on data1-1}
\end{equation}
for some $\beta\in(0,1)$.}
\end{lyxlist}
\emph{Then the DMD computed from $\boldsymbol{\Phi}$ and $\hat{\boldsymbol{\Phi}}$
yields a matrix $\boldsymbol{\mathcal{D}}$ that is locally topologically
conjugate with order $\mathcal{O}\left(\left|\mathbf{P}_{E}^{\mathrm{}}\mathbf{X}\right|^{\beta}\right)$
error to the linearized dynamics on a $d$-dimensional attracting
spectral submanifold }$\mathcal{W}(E)\in C^{1}$\emph{ tangent to
$E$ at $\mathbf{x}=\mathbf{0}$. Specifically, we have 
\begin{equation}
\boldsymbol{\mathcal{D}}=D\boldsymbol{\phi}(\mathbf{0})\mathbf{T}_{E}\mathbf{\Lambda}_{E}\left(D\boldsymbol{\phi}(\mathbf{0})\mathbf{T}_{E}\right)^{-1}+\mathcal{O}\left(\left|\mathbf{P}_{E}^{\mathrm{}}\mathbf{X}\right|^{\beta}\right).\label{eq:D matrix from data1-1}
\end{equation}
The spectral submanifold $\mathcal{W}(E)$ and its reduced dynamics
are of class $C^{1}$ at the origin, and at least Hölder continuous
in a neighborhood of the origin.}
\begin{proof}
The proof is identical to the proof of Theorem \ref{thm: DMD} but
uses the discrete version of the linearization result by \citet{hartman60}
for stable hyperbolic fixed points of maps.
\end{proof}
Theorem \ref{thm: DMD} can be immediately applied to justify DMD
as a linearization tool for period-one maps (or Poincaré maps) of
time-periodic, non-autonomous dynamical systems near their periodic
orbits. This requires the data matrices \emph{$\boldsymbol{\Phi}$
}and\emph{ $\hat{\boldsymbol{\Phi}}$} to contain trajectories of
such a Poincaré map. Remark \pageref{rem:unstable fixed points} on
the treatment of slow spectral subspaces $E$ containing possible
instabilities also applies here under the modified assumption
\begin{equation}
\left|\text{\ensuremath{\lambda_{n}}}\right|\leq\ldots\leq\left|\text{\ensuremath{\lambda_{d+1}} }\right|<1,\qquad\left|\text{\ensuremath{\lambda_{d+1}} }\right|<\left|\text{\ensuremath{\lambda_{d}} }\right|\leq\ldots\leq\left|\text{\ensuremath{\lambda_{1}} }\right|,\quad\left|\text{\ensuremath{\lambda_{j}} }\right|\neq1,\quad j=1,\ldots,n,\label{eq:eigenvalue configuration for E-1-1}
\end{equation}
which only requires the fixed point to be hyperbolic and $\mathbf{A}$
to have a $d$-dimensional normally attracting subspace.

\subsection{Justification of DMD for infinite-dimensional dynamical systems\label{subsec:Justification-of-DMD-inf-dimensional-systems}}

Most data sets of interest arguably arise from infinite-dimensional
dynamical systems of fluids and solids. Examples include experimental
or numerical data describing fluid motion, continuum vibrations, climate
dynamics or salinity distribution in the ocean. In the absence of
external forcing, these problems are governed by systems of autonomous
nonlinear partial differential equations that can often be viewed
as evolutionary differential equations in a form similar to eq. (\ref{eq:nonlinear system0}),
but defined on an appropriate infinite-dimensional Banach space. Accordingly,
time-sampled solutions of these equations can be viewed as iterated
mappings of the form (\ref{eq:discrete dynamical system}) but defined
on Banach spaces.

Our approach to justifying DMD generally carries over to this infinite-dimensional
setting, as long as the observable vector $\boldsymbol{\phi}(\mathbf{x})$
remains finite-dimensional, and both the Banach space and the discrete
or continuous dynamical system defined on it satisfy appropriate regularity
conditions. These regularity conditions tend to be technical, but
when they are satisfied, they do guarantee the extension of Theorems
\ref{thm: DMD}-\ref{thm:DMD-maps} to Banach spaces. This offers
a justification to use DMD to obtain an approximate finite-dimensional
linear model for the dynamics of the underlying continuum system on
a finite-dimensional attracting slow manifold (or inertial manifold)
in the neighborhood of a non-degenerate stationary solution. 

To avoid major technicalities, we only state here a generalized version
of Theorem \ref{thm: DMD} to justify the use of DMD for observables
defined on Banach spaces for a discrete evolutionary process with
a stable hyperbolic stationary state. We consider mappings of the
form
\begin{equation}
\mathbf{x}_{n+1}=\mathbf{f}(\mathbf{x}_{n})=\mathbf{A}\mathbf{x}_{n}+\tilde{\mathbf{f}}(\mathbf{x}_{n}),\qquad\mathbf{x}_{j}\in\mathcal{B},\qquad\tilde{\mathbf{f}}\colon U\subset\mathcal{B}\to\mathcal{B},\qquad\tilde{\mathbf{f}}(\mathbf{0})=\mathbf{0}\in U,\label{eq:discrete dynamical system-1}
\end{equation}
where $\mathcal{B}$ is a Banach space, $U$ is an open set in $\mathcal{B}$,
and $\mathbf{A}\colon\mathcal{B}\to\mathcal{B}$ is an invertible
linear operator that is bounded in the norm defined on $\mathcal{B}$.
The function $\mathbf{f}$ can be here the time-sampled version of
an infinite-dimensional flow map of an autonomous evolutionary PDE
or the Poincaré map of a time-periodic evolutionary PDE. We assume
that for some $\alpha\in\left(0,1\right)$, $\tilde{\mathbf{f}}\in C^{1,\alpha}(U)$
holds, i.e., $\tilde{\mathbf{f}}$ is (Fréchet-) differentiable in
$U$ and its derivative, $D\tilde{\mathbf{f}}$, is Hölder-continuous
in $\mathbf{x}\in U$ with Hölder exponent $\alpha$. 

The \emph{spectral radius} of $A$ is defined as
\[
\rho(\mathbf{A})=\lim_{k\to\infty}\left|\mathbf{A}^{k}\right|^{\frac{1}{k}}.
\]
We recall that in the special case $\mathcal{B}=\mathbb{R}^{n}$ treated
in Section \ref{subsec:Justification-of-DMD-disc-systems}, we have
$\rho(\mathbf{A})=\max_{1\leq j\leq n}\left|\text{\ensuremath{\lambda_{j}}}\right|$.
For some $\alpha\in\left(0,1\right)$, the linear operator $\mathbf{A}$
is called \emph{$\alpha$-contracting} if 
\begin{equation}
\rho(\mathbf{A})^{1+\alpha}\rho\left(\mathbf{A}^{-1}\right)<1,\label{eq:alpha-contracting condition}
\end{equation}
which can only hold if $\rho(\mathbf{A})<1$ (see \citet{newhouse17}).
Therefore, in the simple case of $\mathcal{B}=\mathbb{R}^{n}$, $A$
is \emph{$\alpha$-contracting} if it is a contraction (i.e., all
its eigenvalues are less than one in norm) and 
\[
\left|\lambda_{1}\right|^{1+\alpha}<\left|\lambda_{n}\right|,
\]
showing that the spectrum of $\mathbf{A}$ is confined to an annulus
of outer radius $\left|\lambda_{1}\right|<1$ and inner radius $\left|\lambda_{1}\right|^{1+\alpha}$.
We can now state our main result on the justification of DMD for infinite-dimensional
discrete dynamical systems.
\begin{thm}
\emph{\label{thm:DMD-infinite-dim}{[}}\textbf{\emph{Justification
of DMD for infinite-dimensional maps with stable hyperbolic fixed
points}}\emph{{]} Assume that }
\end{thm}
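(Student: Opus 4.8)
The plan is to mirror the proof of Theorem~\ref{thm:DMD-maps}, upgrading each finite-dimensional ingredient to its Banach-space counterpart. The three pillars of the earlier proofs are: (i) a linearization/invariant-manifold theorem guaranteeing a $C^1$ attracting slow manifold $\mathcal{W}(E)$ tangent to the slow spectral subspace $E$, with trajectories synchronizing onto it at a rate dictated by the spectral gap; (ii) the non-degenerate observable parametrization (A3), which lets $\boldsymbol{\varphi}=\boldsymbol{\phi}\vert_{\mathcal{W}(E)}$ serve as a coordinate on $\mathcal{W}(E)$; and (iii) the data-richness and slow-dominance conditions (A4), which ensure DMD recovers the linear part of the reduced dynamics up to an error set by the distance of the data from $\mathcal{W}(E)$. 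In the Banach setting, pillar~(i) is exactly where the $\alpha$-contracting hypothesis \eqref{eq:alpha-contracting condition} and the regularity $\tilde{\mathbf{f}}\in C^{1,\alpha}(U)$ enter: I would invoke the invariant-manifold and linearization results of \citet{newhouse17} (the same source cited for the $\alpha$-contracting notion) to obtain a $C^1$ attracting spectral submanifold $\mathcal{W}(E)$ of dimension $d$, normally attracting with a spectral gap, so that any orbit starting in $U$ converges to $\mathcal{W}(E)$ at a rate governed by $\rho(\mathbf{A}\vert_F)$.

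\emph{First} I would state the hypotheses by direct analogy with Theorem~\ref{thm:DMD-maps}, replacing the eigenvalue-ordering assumption \eqref{eq:eigenvalue configuration for E-1} by the requirement that $\mathbf{A}$ be $\alpha$-contracting together with a spectral-gap splitting $\mathrm{Spect}[\mathbf{A}]=\mathrm{Spect}[\mathbf{A}\vert_E]\cup\mathrm{Spect}[\mathbf{A}\vert_F]$ with $\rho(\mathbf{A}\vert_F)<\inf_{\mu\in\mathrm{Spect}[\mathbf{A}\vert_E]}\lvert\mu\rvert$, where $E$ is a finite-dimensional ($d$-dimensional) $\mathbf{A}$-invariant slow subspace and $F$ its invariant complement. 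The non-degeneracy conditions (A3) and (A4) carry over verbatim, since $\boldsymbol{\phi}$ is finite-dimensional and $D\boldsymbol{\phi}(\mathbf{0})\vert_E$ is a finite-rank object regardless of the dimension of $\mathcal{B}$; the projections $\mathbf{P}_E,\mathbf{P}_F$ are now the spectral projections associated with the invariant splitting.

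\emph{Next}, with $\mathcal{W}(E)$ in hand, the argument becomes finite-dimensional on the manifold itself: the reduced dynamics on the $d$-dimensional $\mathcal{W}(E)$ is a $C^1$ map with a hyperbolic fixed point, and by \citet{newhouse17} (or the Banach $C^1$ linearization theorem) it is locally $C^1$-conjugate to its linearization $\mathbf{A}\vert_E$ at the origin, $C^0$-conjugate (Hölder) away from it. Pulling this back through the observable parametrization $\boldsymbol{\varphi}$, which is a $C^1$ diffeomorphism onto its image near $\mathbf{0}$ by (A3), the restricted observable dynamics is conjugate to the linear map whose matrix in the observable coordinates is $D\boldsymbol{\phi}(\mathbf{0})\mathbf{T}_E\,\mathbf{\Lambda}_E\,(D\boldsymbol{\phi}(\mathbf{0})\mathbf{T}_E)^{-1}$, exactly as in \eqref{eq:D matrix from data1-1}. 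Finally, condition (A4) controls the residual: since the data lie within distance $\mathcal{O}(\lvert\mathbf{P}_E\mathbf{X}\rvert^{1+\beta})$ of $\mathcal{W}(E)$, the DMD least-squares fit \eqref{eq:general solution of DMD problem} reproduces this conjugacy with the stated $\mathcal{O}(\lvert\mathbf{P}_E\mathbf{X}\rvert^{\beta})$ error, and the conjugacy/regularity claims ($C^1$ at the origin, Hölder nearby) follow.

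The \emph{main obstacle} I anticipate is verifying that the Banach-space invariant-manifold and linearization machinery actually applies under the stated hypotheses --- specifically, that $\alpha$-contractivity plus $C^{1,\alpha}$ regularity is the correct sufficient condition to produce a $C^1$ normally attracting slow manifold with the needed Hölder-conjugate reduced dynamics, rather than merely a Lipschitz or invariant (but non-smooth) manifold. This is precisely the role of the \citet{newhouse17} results, and the delicate point is that boundedness and invertibility of $\mathbf{A}$ on the infinite-dimensional $F$-direction, together with the spectral-gap and $\alpha$-contraction inequalities, are exactly what guarantee the relevant $C^1$ smoothness and the fast normal decay; once this is granted, the remainder of the proof is a verbatim repetition of the finite-dimensional argument, since all the DMD-specific estimates depend only on the finite-dimensional reduced dynamics and the finite-dimensional observable.
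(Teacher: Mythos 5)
Your proposal is correct in substance and rests on the same key ingredient as the paper --- the linearization theory of \citet{newhouse17} for $\alpha$-contracting operators --- but it is organized in the opposite order. The paper's proof is a one-step argument: it invokes Theorem 3.1 of \citet{newhouse17} to obtain a near-identity conjugacy $\mathbf{x}=\mathbf{y}+\mathbf{h}(\mathbf{y})$ with $\mathbf{h}\in C^{1,\alpha}(B)$ on a small ball, linearizing the full Banach-space map at once; the slow SSM then comes for free as $\mathcal{W}(E)=\left(\mathbf{I}+\mathbf{h}\right)^{-1}(E)$, the reduced dynamics are exactly linear in the new coordinates, and the data-matrix estimates from the proof of Theorem \ref{thm: DMD} in Appendix \ref{sec: Proof of Theorem 1} go through verbatim, because the near-identity bound $\mathbf{h}(\mathbf{y})=\mathcal{O}\left(\left|\mathbf{y}\right|^{1+\beta}\right)$ is precisely what produces the $\mathcal{O}\left(\left|\mathbf{P}_{E}\mathbf{X}\right|^{\beta}\right)$ error. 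You instead construct $\mathcal{W}(E)$ first and then linearize the reduced $d$-dimensional dynamics on it. That modular route can be completed, but it requires two ingredients the paper's route avoids: (i) a standalone Banach-space slow-manifold theorem delivering $\mathcal{W}(E)$ with $C^{1,\alpha}$ --- not merely $C^{1}$ --- regularity, since a $C^{1}$ manifold would leave you with only a $C^{1}$ reduced map, and $C^{1}$ contractions are in general not $C^{1}$-linearizable; and (ii) the observation that $\mathbf{A}\vert_{E}$ inherits $\alpha$-contraction from $\mathbf{A}$ (true, because spectral radii do not increase under restriction to invariant subspaces), so that the finite-dimensional linearization theorem applies to the reduced map. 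You correctly flag (i) as the main obstacle. Note also that your final step --- converting ``data within $\mathcal{O}\left(\left|\mathbf{P}_{E}\mathbf{X}\right|^{1+\beta}\right)$ of $\mathcal{W}(E)$'' into the DMD error bound --- implicitly requires the full-space $\left(\boldsymbol{\xi},\boldsymbol{\eta}\right)$ coordinates of the global linearization anyway, because the expansions of $\boldsymbol{\Phi}$ and $\hat{\boldsymbol{\Phi}}$ in the appendix are carried out for data \emph{off} the manifold; so in practice your argument collapses back to the paper's single-conjugacy computation. One incidental merit of your write-up: your spectral-gap condition $\rho\left(\mathbf{A}\vert_{F}\right)<\inf_{\mu\in\mathrm{Spect}\left[\mathbf{A}\vert_{E}\right]}\left|\mu\right|$ is the inequality actually needed for $E$ to be the \emph{slow} subspace, whereas eq. (\ref{eq:spectral gap condiition for infinite dimensions}) as printed in the paper interchanges the roles of $E$ and $F$.
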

\begin{lyxlist}{00.00.0000}
\item [{(A1)}] For some $\alpha\in\left(0,1\right)$, the linear operator
$\mathbf{A}$ is \emph{$\alpha$-contracting }(and hence the $x=0$
fixed point of system (\ref{eq:discrete dynamical system-1}) is linearly
stable).
\item [{\emph{(A2)}}] \emph{In eq. (}\ref{eq:discrete dynamical system-1}),\emph{
}$\tilde{\mathbf{f}}\in C^{1,\alpha}(U)$\emph{ holds in a $U$ neighborhood
of the origin.}
\item [{\emph{(A3)}}] \emph{For some integer $d\in\mathbb{N}^{+}$, there
is a splitting $\mathcal{B}=E\oplus F$ of $\mathcal{B}$ into two
$A$-invariant subspaces $E,F\subset\mathcal{B}$ such that $E$ is
$d$-dimensional and slow, i.e., 
\begin{equation}
\rho\left(\mathbf{A}\vert_{E}\right)<\frac{1}{\rho\left(\mathbf{A}^{-1}\vert_{F}\right)}\label{eq:spectral gap condiition for infinite dimensions}
\end{equation}
Furthermore, a $d$-dimensional observable function $\boldsymbol{\phi}\in C^{2}$
satisfies the non-degeneracy condition 
\begin{equation}
\mathrm{rank\,}\left[D\boldsymbol{\phi}\left(\mathbf{0}\right)\vert_{E}\right]=d.\label{eq:nondegeneracy of observables for DMD1-1-1}
\end{equation}
}
\item [{\emph{(A4)}}] \emph{The data matrices $\boldsymbol{\Phi}$ and
$\hat{\boldsymbol{\Phi}}$ collected from iterations of system (}\ref{eq:discrete dynamical system-1}\emph{)
are non-degenerate and are dominated by data near $E$, i.e., 
\begin{equation}
\mathrm{rank_{\mathrm{row}}\,}\boldsymbol{\Phi}=d,\qquad\left|\mathbf{P}_{F}^{\mathrm{}}\mathbf{X}\right|,\left|\mathbf{P}_{F}^{\mathrm{}}\hat{\mathbf{X}}\right|\leq\left|\mathbf{P}_{E}^{\mathrm{}}\mathbf{X}\right|^{1+\beta},\label{eq:nondegeneracy assumption on data1-1-1}
\end{equation}
for some $\beta\in(0,1)$.}
\end{lyxlist}
\emph{Then the DMD computed from $\boldsymbol{\Phi}$ and $\hat{\boldsymbol{\Phi}}$
yields a matrix $\boldsymbol{\mathcal{D}}$ that is locally topologically
conjugate with order $\mathcal{O}\left(\left|\mathbf{P}_{E}^{\mathrm{}}\mathbf{X}\right|^{\beta}\right)$
error to the linearized dynamics on a $d$-dimensional attracting
spectral submanifold $\mathcal{W}(E)$ tangent to $E$ at $\mathbf{x}=\mathbf{0}$.
Specifically, we have 
\begin{equation}
\boldsymbol{\mathcal{D}}=D\boldsymbol{\phi}(\mathbf{0})\mathbf{T}_{E}\mathbf{\Lambda}_{E}\left(D\boldsymbol{\phi}(\mathbf{0})\mathbf{T}_{E}\right)^{-1}+\mathcal{O}\left(\left|\mathbf{P}_{E}^{\mathrm{}}\mathbf{X}\right|^{\beta}\right).\label{eq:D matrix from data1-1-1}
\end{equation}
 The spectral submanifold $\mathcal{W}(E)$ and its reduced dynamics
are of class $C^{1}$ in a neighborhood of the origin.}
\begin{proof}
The proof follows the steps in the proof of Theorem \ref{thm:DMD-maps}
but uses an infinite-dimensional linearization result, Theorem 3.1
of \citet{newhouse17}, for stable hyperbolic fixed points of maps
on Banach spaces. Specifically, if $\mathbf{A}$ is $\alpha$-contracting,
then \citet{newhouse17} shows the existence of a near-identity linearizing
transformation $\mathbf{x}=\mathbf{y}+\mathbf{h}(\mathbf{y})$ for
the discrete dynamical system (\ref{eq:discrete dynamical system-1})
such that $\mathbf{h}\in C^{1,\alpha}(B)$ holds on a small enough
ball $B\subset U$ centered at $\mathbf{x}=\mathbf{0}$. Using this
linearization theorem instead of its finite-dimensional version from
\citet{hartman60}, we can follow the same steps as in the proof of
Theorem \ref{thm:DMD-maps} to conclude the statement of the theorem.
\end{proof}
In Appendix \ref{sec: Proof of Theorem 1}, Remarks \ref{rem:smoothness in infinite dimensions}-\ref{rem:mixed mode SSM in infinite dimensions}
summarize technical remarks on possible further extensions of Theorem
\ref{thm:DMD-infinite-dim}.

\section{Data-Driven Linearization (DDL)\label{sec:DDL}}

\subsection{Theoretical foundation for DDL}

Based on the results of the previous section, we now refine the first-order
approximation to the linearized dynamics yielded by DMD near a hyperbolic
fixed point. Specifically, we construct the specific nonlinear coordinate
change that linearizes the restricted dynamics on the attracting spectral
submanifold $\mathcal{W}(E)$ illustrated in Fig. \ref{fig: geometry of DMD}.
This classic notion of linearization on $\mathcal{W}(E)$ yields a
$d$-dimensional linear reduced model, which can be of significantly
lower dimension than the original $n$-dimensional nonlinear system.
This is to be contrasted with the broadly pursued Koopman embedding
approach (see, e.g., \citet{rowley09,budisic12,mezic13}), which seeks
to immerse nonlinear systems into linear systems of dimensions substantially
higher (or even infinite) relative to $n$. 

The following result gives the theoretical basis for our subsequent
data-driven linearization (DDL) algorithm. We will use the notation
$C^{a}$ for the class of real analytic functions.
\begin{thm}
\emph{\label{thm:DDL}{[}}\textbf{\emph{DDL principle for ODEs with
a stable hyperbolic fixed points}}\emph{{]} Assume that the $\mathbf{x}=\mathbf{0}$
stable hyperbolic fixed point of system \eqref{eq:more specific nonlinear system}
and the spectrum of $\mathbf{A}$ has a spectral gap as in eq. (\ref{eq:eigenvalue configuration for E}).
Assume further that for some $r\in\mathbb{N}^{+}\cup\left\{ \infty,a\right\} $,
the following conditions are satisfied:}
\end{thm}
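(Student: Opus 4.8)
The plan is to realize the linearizing change of variables as a composition of three maps — the smooth parametrization of the slow SSM, a smooth (or analytic) normal-form transformation that linearizes the reduced vector field, and the invertible restricted observable — and then to read off the asserted regularity from the weakest link in this chain. First I would invoke the existence theory for slow spectral submanifolds (\citet{cabre03,haller16,cenedese22a}): under the spectral-gap condition (\ref{eq:eigenvalue configuration for E}) and $C^r$ regularity of $\mathbf{f}$, there is a unique attracting slow SSM $\mathcal{W}(E)$ of class $C^r$ tangent to $E$ at the origin, provided the absolute spectral-gap ratio dominates $r$. Writing $\mathbf{x}=\mathbf{W}(\boldsymbol{\xi})$, $\boldsymbol{\xi}\in E$, for its $C^r$ parametrization with $D\mathbf{W}(\mathbf{0})|_E=\mathrm{id}$, the dynamics of (\ref{eq:more specific nonlinear system}) restricted to $\mathcal{W}(E)$ becomes a self-contained $C^r$ reduced system $\dot{\boldsymbol{\xi}}=\mathbf{A}|_E\boldsymbol{\xi}+\mathbf{r}(\boldsymbol{\xi})$ with $\mathbf{r}(\boldsymbol{\xi})=o(|\boldsymbol{\xi}|)$ and linear part $\mathbf{A}|_E$ carrying exactly the slow spectrum $\lambda_1,\ldots,\lambda_d$.

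Second, I would linearize this reduced field. Because the fixed point is stable hyperbolic, the slow spectrum lies in the open left half-plane, i.e. in the Poincaré domain, where only finitely many resonances can occur. Under the non-resonance conditions assumed in the theorem, I would apply Sternberg's (and Chen's) $C^r$ linearization theorem when $r\in\mathbb{N}^+\cup\{\infty\}$, and Poincaré's analytic linearization theorem when $r=a$, to obtain a near-identity change of variables $\boldsymbol{\eta}=\boldsymbol{\xi}+\mathbf{h}(\boldsymbol{\xi})$, of class $C^r$ (resp. $C^a$), conjugating the reduced field to its linearization $\dot{\boldsymbol{\eta}}=\mathbf{A}|_E\boldsymbol{\eta}$. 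Diagonalizing $\mathbf{A}|_E$ through $\mathbf{T}_E$ then yields the flow $e^{\mathbf{\Lambda}_E t}$ in eigencoordinates, matching the leading-order DMD matrix in (\ref{eq:D matrix from data1}).

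Third, I would transport this linearization into the observable space. The restricted observable $\boldsymbol{\varphi}=\boldsymbol{\phi}\circ\mathbf{W}\colon E\to\mathbb{R}^d$ is $C^r$ with derivative $D\boldsymbol{\varphi}(\mathbf{0})=D\boldsymbol{\phi}(\mathbf{0})|_E$, which has full rank $d$ by the non-degeneracy hypothesis (cf. (\ref{eq:nondegeneracy of observables for DMD1})); hence $\boldsymbol{\varphi}$ is a local $C^r$ diffeomorphism onto its image and is invertible near the origin. The DDL transformation is then $\mathbf{y}=\mathbf{g}(\boldsymbol{\varphi})$ with $\mathbf{g}=D\boldsymbol{\phi}(\mathbf{0})\mathbf{T}_E\circ(\mathrm{id}+\mathbf{h})\circ\boldsymbol{\varphi}^{-1}$, and in these coordinates the observed dynamics on $\mathcal{W}(E)$ obeys the exact linear law $\dot{\mathbf{y}}=\mathbf{\Lambda}_E\mathbf{y}$, i.e. $\mathbf{y}(t)=e^{\mathbf{\Lambda}_E t}\mathbf{y}(0)$. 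Composing these $C^r$ (resp. $C^a$) maps preserves the regularity class, yielding the claimed smoothness of the linearizing transformation.

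The main obstacle is the interplay between the smoothness demanded of the final transformation and the two independent regularity bottlenecks it must respect. On the one hand, $\mathcal{W}(E)$ is only $C^r$ when the spectral-gap ratio $\mathrm{Re}\,\lambda_{d+1}/\mathrm{Re}\,\lambda_1$ exceeds $r$, so the hypotheses must include an absolute-gap (or $r$-fold-gap) condition, not merely the splitting (\ref{eq:eigenvalue configuration for E}). On the other hand, Sternberg-type linearization requires the slow eigenvalues to avoid resonances $\lambda_j=\sum_k m_k\lambda_k$ up to the order dictated by $r$ (all orders for $r=\infty$, and — via Poincaré — the finitely many possible resonances in the Poincaré domain for $r=a$). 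Verifying that these are precisely the conditions listed in the theorem, and that the composition loses no regularity at the inversion step $\boldsymbol{\varphi}^{-1}$, is the delicate part; the remainder is a bookkeeping of compositions whose existence and smoothness are supplied by the cited SSM and linearization theorems.
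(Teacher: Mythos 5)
Your route is genuinely different from the paper's. You construct $\mathcal{W}(E)$ first from SSM existence theory (the parametrization method of \citet{cabre03}), restrict the dynamics to obtain a nonlinear $d$-dimensional reduced field, and then linearize \emph{once}, in $d$ dimensions, transporting the result through the observable chart $\boldsymbol{\varphi}^{-1}$. The paper instead linearizes the \emph{full} $n$-dimensional system via \citet{sternberg57} and \citet{poincare1892} (with the global strengthening of \citet{kvalheim21}), which yields $\mathcal{W}(E)=(\mathbf{I}+\mathbf{h})^{-1}(E)$ together with already-linear reduced dynamics $\dot{\boldsymbol{\xi}}=\boldsymbol{\Lambda}_{E}\boldsymbol{\xi}$; the passage to observable coordinates then re-introduces nonlinear terms, and a \emph{second}, $d$-dimensional application of the same linearization theorems produces the near-identity map $\boldsymbol{\kappa}$. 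Your single-linearization composition is locally legitimate and arguably more economical, and you correctly identify the Poincaré-domain mechanism by which only finitely many resonances can obstruct the construction.

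There are, however, two concrete gaps. First, statement (ii) of the theorem asserts a \emph{unique} $C^{r}$ conjugacy valid on the \emph{entire} domain of attraction of the origin within $\mathcal{W}(E)$, whereas Sternberg/Chen/Poincaré deliver only a local, a priori non-unique conjugacy near the fixed point; your argument therefore proves a strictly weaker statement. The paper closes this gap by invoking the linearization-in-the-large and uniqueness results of \citet{lan13} and \citet{kvalheim21}, and you would need the same ingredient. Second, your claim that $C^{r}$ smoothness of $\mathcal{W}(E)$ requires the ``absolute spectral-gap ratio to dominate $r$'' imports a hypothesis that is neither stated in the theorem nor needed: that Fenichel-type ratio condition governs general normally hyperbolic manifolds, but for the primary slow SSM at a fixed point, smoothness and uniqueness follow from the nonresonance conditions (B1) alone. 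Indeed, because all eigenvalues satisfy $\mathrm{Re}\,\lambda_{1}<0$, the paper proves the elementary estimate $\sum_{j}m_{j}\mathrm{Re}\,\lambda_{j}<\mathrm{Re}\,\lambda_{k}$ for $\sum_{j}m_{j}>Q$, so resonances above the spectral-quotient order $Q$ in eq. (\ref{eq:nonresonance condition2-2}) are impossible and the finitely many conditions in (B1) suffice for the full Sternberg/Poincaré hypotheses. If you literally imposed the gap-ratio condition, you would prove the theorem only under stronger assumptions than stated; note also that (B1) involves the full spectrum (including cross-resonances between $E$ and $F$), which is exactly what simultaneously guarantees the $C^{r}$ smoothness of $\mathcal{W}(E)$ and the linearizability of its reduced field in your chain.
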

\begin{lyxlist}{00.00.0000}
\item [{(B1)}] \emph{$\mathbf{f}_{2}\in C^{r}$ and for all nonnegative
integers $m_{j}$, the nonresonance conditions }
\begin{equation}
\lambda_{k}\neq\sum_{j=1}^{n}m_{j}\lambda_{j},\qquad k=1,\ldots,n,\qquad2\leq\sum_{j=1}^{n}m_{j}\leq Q\leq r,\quad Q:=\left\lfloor \frac{\max_{i}\left|\mathrm{Re}\,\lambda_{i}\right|}{\min_{i}\left|\mathrm{Re}\,\lambda_{i}\right|}\right\rfloor +1,\label{eq:nonresonance condition2-2}
\end{equation}
\emph{hold for the eigenvalues of $\mathbf{A}$, with $\left\lfloor \,\cdot\:\right\rfloor $
referring to the integer part of a positive number.}
\item [{(B2)}] \emph{For some integer $d\in\left[1,n\right]$, a $d$-dimensional
observable function $\boldsymbol{\phi}\in C^{r}$ and the $d$-dimensional
slow spectral subspace $E$ of the stable fixed point $\mathbf{x}=\mathbf{0}$
of system (}\ref{eq:more specific nonlinear system}\emph{) satisfy
the non-degeneracy condition. 
\begin{equation}
\mathrm{rank\,}\left[D\boldsymbol{\phi}\left(\mathbf{0}\right)\vert_{E}\right]=d.\label{eq:nondegeneracy of observables for DMD-1-2}
\end{equation}
}
\end{lyxlist}
\emph{Then the following hold: }

\emph{(i) On the unique $d$-dimensional attracting spectral submanifold
$\mathcal{W}(E)\in C^{r}$ tangent to $E$ at $x=0$, the reduced
observable vector $\boldsymbol{\varphi}=\boldsymbol{\phi}\vert_{\mathcal{W}(E)}$
can be used to describe the reduced dynamics as
\begin{equation}
\dot{\boldsymbol{\varphi}}=\mathbf{B}\boldsymbol{\varphi}+\mathbf{q}\left(\boldsymbol{\varphi}\right),\qquad\mathbf{B}=D\boldsymbol{\phi}(\mathbf{0})\mathbf{T}_{E}\mathbf{\Lambda}_{E}\left(D\boldsymbol{\phi}(\mathbf{0})\mathbf{T}_{E}\right)^{-1},\quad\mathbf{q}\left(\boldsymbol{\varphi}\right)=\mathcal{O}\left(\left|\boldsymbol{\varphi}\right|^{2}\right).\label{eq:gamma ODE}
\end{equation}
(ii) There exists a unique, $C^{r}$ change of coordinates 
\begin{equation}
\boldsymbol{\varphi}=\boldsymbol{\kappa}(\boldsymbol{\gamma})=\boldsymbol{\gamma}+\boldsymbol{\ell}(\boldsymbol{\gamma}),\label{eq:linearizing transformation on W(E)}
\end{equation}
 that transforms the reduced dynamics on $\mathcal{W}(E)$ to its
linearization 
\begin{equation}
\dot{\boldsymbol{\gamma}}=\mathbf{B}\boldsymbol{\gamma}\label{eq:linearized flow on SSM}
\end{equation}
inside the domain of attraction of $\mathbf{x}=\mathbf{0}$ within
the spectral submanifold $\mathcal{W}(E).$ }

\emph{(iii) The transformation (}\ref{eq:linearizing transformation on W(E)}\emph{)
satisfies the $d$-dimensional system of nonlinear PDEs
\begin{equation}
D_{\boldsymbol{\gamma}}\boldsymbol{\ell}(\boldsymbol{\gamma})\mathbf{B}\boldsymbol{\gamma}=\mathbf{B}\boldsymbol{\ell}(\boldsymbol{\gamma})+\mathbf{q}\left(\boldsymbol{\gamma}+\boldsymbol{\ell}(\boldsymbol{\gamma})\right).\label{eq:invariance PDE}
\end{equation}
If $r\in\mathbb{N}^{+}\cup\left\{ \infty\right\} $, solutions of
this PDE can locally be approximated as}
\begin{equation}
\boldsymbol{\ell}(\boldsymbol{\gamma})=\sum_{\left|\mathbf{k}\right|=2}^{r}\mathbf{l}_{\mathbf{k}}\boldsymbol{\gamma}^{\mathbf{k}}+o\left(\left|\boldsymbol{\gamma}\right|^{r}\right),\quad\mathbf{k}\in\mathbb{N}^{d},\quad\mathbf{l}_{\mathbf{k}}\in\mathbb{R}^{d},\quad\boldsymbol{\gamma}^{\mathbf{k}}:=\gamma_{1}^{k_{1}}\cdots\gamma_{d}^{k_{d}}.\label{eq:DDL transformation formal}
\end{equation}
\emph{ If $r=a$, then the local approximation (}\ref{eq:DDL transformation formal})
\emph{can be refined to a convergent Taylor series 
\begin{equation}
\boldsymbol{\ell}(\boldsymbol{\gamma})=\sum_{\left|\mathbf{k}\right|=2}^{\infty}\mathbf{l}_{\mathbf{k}}\boldsymbol{\gamma}^{\mathbf{k}},\quad\mathbf{k}\in\mathbb{N}^{d},\quad\mathbf{l}_{\mathbf{k}}\in\mathbb{R}^{d},\quad\boldsymbol{\gamma}^{\mathbf{k}}:=\gamma_{1}^{k_{1}}\cdots\gamma_{d}^{k_{d}}\label{eq:DDL transformation convergent}
\end{equation}
in a neighborhood of the origin. In either case, the coefficients
$\mathbf{l}_{\mathbf{k}}$ can be determined by substituting the expansion
for $\boldsymbol{\ell}(\boldsymbol{\gamma})$ into the PDE (\ref{eq:invariance PDE}),
equating coefficients of equal monomials $\boldsymbol{\gamma}^{\mathbf{k}}$
and solving the corresponding recursive sequence of $d$-dimensional
linear algebraic equations for increasing $\left|\mathbf{k}\right|$.}
\begin{proof}
The proof builds on the existence of the $d$-dimensional spectral
submanifold $\mathcal{W}\left(E\right)$ guaranteed by Theorem \ref{thm: DMD}.
For a $C^{r}$ dynamical system with $r\in\mathbb{N}^{+}\cup\left\{ \infty,a\right\} $,
$\mathcal{W}\left(E\right)$ is also $C^{r}$ smooth based on the
linearization theorems of \citet{poincare1892} and \citet{sternberg57},
as long as the nonresonance condition (\ref{eq:nonresonance condition2-2})
holds. Condition (\ref{eq:nondegeneracy of observables for DMD-1-2})
then ensures that $\mathcal{W}\left(E\right)$ can be parametrized
locally by the restricted observable vector $\boldsymbol{\varphi}$
and hence its reduced dynamics can be written as a nonlinear ODE for
$\boldsymbol{\varphi}$. This ODE can again be linearized by a near-identity
coordinate change (\ref{eq:linearizing transformation on W(E)}) using
the appropriate linearization theorem of the two cited above. The
result is the restricted linear system (\ref{eq:linearized flow on SSM})
to which the dynamics is $C^{r}$ conjugate within the whole domain
of attraction of the $\boldsymbol{\varphi}=\mathbf{0}$ fixed point
inside $\mathcal{W}\left(E\right)$. The invariance PDE (\ref{eq:invariance PDE})
can be obtained by substituting the linearizing transformation (\ref{eq:linearizing transformation on W(E)})
into the reduced dynamics on $\mathcal{W}\left(E\right)$. This PDE
can then be solved via a Taylor expansion up to order $r$. We give
more a more detailed proof in Appendix \ref{sec:Proof-of-Theorem-DDL}.
\end{proof}
Note that $1:1$ resonances are not excluded by the condition (\ref{eq:nonresonance condition2-2}),
and hence repeated eigenvalues arising from symmetries in physical
systems are still amenable to DDL. Also of note is that the non-resonance
conditions (\ref{eq:nonresonance condition2-2}) do not exclude frequency-type
resonances among imaginary parts of oscillatory eigenvalues. Rather,
they exclude simultaneous resonances of the same type between the
real and the imaginary parts of the eigenvalues. Such resonances will
be absent in data generated by generic oscillatory systems. 

Assuming hyperbolicity is essential for Theorem \ref{thm:DDL} to
hold, since in this case the linearization is the same as transforming
the dynamics to the Poincaré-normal form. For a non-hyperbolic fixed
point, this normal form transformation results in nonlinear dynamics
on the center manifold. This would, however, only arise in highly
non-generic systems, precisely tuned to be at criticality. Since this
is unlikely to happen in experimentally observed or numerically simulated
systems, the hyperbolicity assumption is not restrictive. 

Finally, under the conditions of Theorem \ref{thm:DMD-infinite-dim},
the DDL results of Theorem \ref{thm:DDL} also apply to data from
infinite-dimensional dynamical systems, such as the fluid sloshing
experiments we will analyze using DDL in Section \ref{subsec:Sloshing}.
In practice, the most restrictive condition of Theorem \ref{thm:DMD-infinite-dim}
is (A1), which requires the solution operator to have a spectrum uniformly
bounded away from zero. Such uniform boundedness is formally violated
in important classes of infinite-dimensional evolution equations,
presenting a technical challenge for the direct applications of SSM
results to certain delay-differential equations (see \citet{szaksz24})
and partial differential equations (see, e.g., \citet{kogelbauer18}
and \citet{buza24}). However, this challenge only concerns rigorous
conclusions on the existence and smoothness of a finite-dimensional,
attracting SSM. If the existence of such an SSM is convincingly established
from an alternative mathematical theory (as is \citet{buza24}) or
inferred from data (as in \citet{szaksz24}), then the DDL algorithm
based on Theorem \ref{thm:DDL} can be used to obtain a data-driven
linearization of the dynamics on that SSM. 

\subsection{DDL vs. EDMD\label{subsec:DDL-vs.-EDMD}}

Here we examine whether there is a possible relationship between DDL
and the extended DMD (or EDMD) algorithm of \citet{williams15}. For
simplicity, we assume analyticity for the dynamical system ($r=a$)
and hence we can write the inverse of the linearizing transformation
(\ref{eq:DDL transformation formal}) behind the DDL algorithm as
a convergent Taylor expansion of the form
\begin{equation}
\boldsymbol{\gamma}=\boldsymbol{\kappa}^{-1}\left(\boldsymbol{\varphi}\right)=\boldsymbol{\varphi}+\sum_{\left|\mathbf{k}\right|=2}^{\infty}\mathbf{q}_{\mathbf{k}}\boldsymbol{\varphi}^{\mathbf{k}}.\label{eq: inverse linearizing transformation}
\end{equation}
We then differentiate this equation in time to obtain from the linearized
equation (\ref{eq:linearized flow on SSM}) a $d$-dimensional system
of equations
\[
\dot{\boldsymbol{\varphi}}+\sum_{\left|\mathbf{k}\right|=2}^{\infty}\mathbf{q}_{\mathbf{k}}\frac{d}{dt}\boldsymbol{\varphi}^{\mathbf{k}}=\mathbf{B}\boldsymbol{\varphi}+\sum_{\left|\mathbf{k}\right|=2}^{\infty}\mathbf{B}\mathbf{q}_{\mathbf{k}}\boldsymbol{\varphi}^{\mathbf{k}}
\]
that the restricted observable $\boldsymbol{\varphi}$ and its monomials
$\boldsymbol{\varphi}^{\mathbf{k}}$ must satisfy. This last equation
can be rewritten as a $d$-dimensional autonomous system of linear
system of ODEs,
\begin{equation}
\left[\begin{array}{cc}
\mathbf{I}_{d\times d} & \mathbf{Q}_{2}\end{array}\right]\frac{d}{dt}\left[\begin{array}{c}
\boldsymbol{\varphi}\\
\mathbf{K}_{\geq2}\left(\boldsymbol{\varphi}\right)
\end{array}\right]=\left[\begin{array}{cc}
\mathbf{B}\,\, & \mathbf{B}\mathbf{Q}_{2}\end{array}\right]\left[\begin{array}{c}
\boldsymbol{\varphi}\\
\boldsymbol{K}_{\geq2}\left(\boldsymbol{\varphi}\right)
\end{array}\right],\label{eq:d-dime linear eq. for linearization coefficients}
\end{equation}
for the reduced observable $\boldsymbol{\varphi}$ and the infinite-dimensional
vector $\mathbf{K}_{\geq2}\left(\varphi\right)$ of all nonlinear
monomials of $\boldsymbol{\varphi}$. Here $\mathbf{I}_{d\times d}$
denotes the $d$-dimensional identity matrix and $\mathbf{Q}_{2}$
contains all coefficients $\mathbf{q}_{\mathbf{k}}$ as column vectors
starting from order $\left|\mathbf{k}\right|=2$. 

If we truncate the infinite-dimensional vector of monomials $\mathbf{K}_{\geq2}\left(\boldsymbol{\varphi}\right)$
to the vector $\mathbf{K}_{2}^{k}\left(\boldsymbol{\varphi}\right)$
of nonlinear monomials up to order $k$, then eq. (\ref{eq:d-dime linear eq. for linearization coefficients})
becomes 
\begin{equation}
\left[\begin{array}{cc}
\mathbf{I}_{d\times d} & \mathbf{Q}_{2}^{k}\end{array}\right]\frac{d}{dt}\left[\begin{array}{c}
\boldsymbol{\varphi}\\
\mathbf{K}_{2}^{k}\left(\boldsymbol{\varphi}\right)
\end{array}\right]=\left[\begin{array}{cc}
\mathbf{B} & \mathbf{B}\mathbf{Q}_{2}^{k}\end{array}\right]\left[\begin{array}{c}
\boldsymbol{\varphi}\\
\mathbf{K}_{2}^{k}\left(\boldsymbol{\varphi}\right)
\end{array}\right].\label{eq:underdetermined problem}
\end{equation}
This is a $d$-dimensional implicit system of linear  ODEs for the
dependent variable vector $\left(\boldsymbol{\varphi},\mathbf{K}_{2}^{k}\left(\boldsymbol{\varphi}\right)\right)$
whose dimension is always larger than $d$. Consequently, the operator
$\left[\begin{array}{cc}
\mathbf{I}_{d\times d} & \mathbf{Q}_{2}^{k}\end{array}\right]$ is never invertible and hence, contrary to the assumption of EDMD,
there is no well-defined linear system of ODEs that governs the evolution
of an observable vector and the monomials of its components. 

The above conclusion remains unchanged even if one attempts to optimize
with respect to the choice of the coefficients $\mathbf{q}_{\mathbf{k}}$
in the matrix $\mathbf{Q}_{2}^{k}$.

\subsection{Implementation and applications of DDL}

\subsubsection{Basic implementation of DDL for model reduction and linearization}

Theorem \ref{thm:DDL} allows us to define a numerical procedure to
construct a linearizing transformation on the $d$-dimensional attracting
slow manifold\emph{ $\mathcal{W}(E)$ }systematically from data. From
eq. \eqref{eq:underdetermined problem}, the matrices $\mathbf{B}$
and $\mathbf{Q}$ are to be determined, given a set of observed trajectories.
In line with the notation used in Section \ref{subsec:DDL-vs.-EDMD},
let the data matrix $\mathbf{K}_{2}^{k}\left(\boldsymbol{\varphi}\right)$
contain monomials (from order $2$ to order $k)$ of the observable
vector $\boldsymbol{\varphi}$ and let $\hat{\mathbf{K}_{2}^{k}}\left(\boldsymbol{\varphi}\right)$
contain denote the evaluation of $\mathbf{K}_{2}^{k}\left(\boldsymbol{\varphi}\right)$
time $\Delta t$ later. Passing to the discrete version of the invariance
equation (\ref{eq:underdetermined problem}), we obtain
\[
\left[\begin{array}{cc}
\mathbf{I} & \mathbf{Q}\end{array}\right]\left[\begin{array}{c}
\hat{\boldsymbol{\varphi}}\\
\hat{\mathbf{K}_{2}^{k}}\left(\boldsymbol{\varphi}\right)
\end{array}\right]=\left[\begin{array}{cc}
\boldsymbol{\mathcal{B}} & \boldsymbol{\mathcal{B}}\mathbf{Q}\end{array}\right]\left[\begin{array}{c}
\boldsymbol{\varphi}\\
\mathbf{K}_{2}^{k}\left(\boldsymbol{\varphi}\right)
\end{array}\right]
\]
for some matrices $\mathbf{Q}\in\mathbb{R}^{d\times N(d,k)-d}$ and
$\boldsymbol{\mathcal{B}}=e^{\mathbf{B}\Delta t}\in\mathbb{R}^{d\times d}$.
Moreover, the inverse transformation of the linearization on the SSM
$\mathcal{W}\left(E\right)$ is well-defined, and hence with an appropriate
matrix $\mathbf{Q}^{inv}\in\mathbb{R}^{d\times N(d,k)-d}$, we can
write 
\[
\left[\begin{array}{cc}
\mathbf{I} & \mathbf{Q}^{inv}\end{array}\right]\left[\begin{array}{c}
\boldsymbol{\varphi}+\mathbf{Q}\mathbf{K}_{2}^{k}\left(\boldsymbol{\varphi}\right)\\
\mathbf{K}_{2}^{k}\left(\boldsymbol{\varphi}+\mathbf{Q}\mathbf{K}_{2}^{k}\left(\boldsymbol{\varphi}\right)\right)
\end{array}\right]=\boldsymbol{\varphi}.
\]

This allows us to define the cost functions 
\begin{align}
\mathcal{L}^{(1)}(\mathbf{Q},\boldsymbol{\mathcal{B}}) & =\left|\left[\begin{array}{cc}
\mathbf{I} & \mathbf{Q}\end{array}\right]\left[\begin{array}{c}
\hat{\boldsymbol{\varphi}}\\
\hat{\mathbf{K}_{2}^{k}}\left(\boldsymbol{\varphi}\right)
\end{array}\right]-\left[\begin{array}{cc}
\boldsymbol{\mathcal{B}} & \boldsymbol{\mathcal{B}}\mathbf{Q}\end{array}\right]\left[\begin{array}{c}
\boldsymbol{\varphi}\\
\mathbf{K}_{2}^{k}\left(\boldsymbol{\varphi}\right)
\end{array}\right]\right|^{2},\label{eq:ddl_costfunction_separate}\\
\mathcal{L}^{(2)}\left(\mathbf{Q},\mathbf{Q}^{inv}\right) & =\left|\mathbf{Q}\mathbf{K}_{2}^{k}\left(\boldsymbol{\varphi}\right)+\mathbf{Q}^{inv}\mathbf{K}_{2}^{k}\left(\boldsymbol{\varphi}+\mathbf{Q}\mathbf{K}_{2}^{k}\left(\boldsymbol{\varphi}\right)\right)\right|^{2},\nonumber 
\end{align}
where $\mathcal{L}^{(1)}$ measures the invariance error along the
observed trajectories and $\mathcal{L}^{(2)}$ measures the error
due to the computation of the inverse. We aim to jointly minimize
$\mathcal{L}^{(1)}$ and $\mathcal{L}^{(2)}$. To this end, we define
the combined cost function
\begin{equation}
\mathcal{L}_{\nu}\left(\mathbf{Q},\mathbf{Q}^{inv},\boldsymbol{\mathcal{B}}\right)=\mathcal{L}^{(1)}(\mathbf{Q},\boldsymbol{\mathcal{B}})+\nu\mathcal{L}^{(2)}\left(\mathbf{Q},\mathbf{Q}^{inv}\right),\label{eq:ddl_costfunction}
\end{equation}
for some $\nu\geq0$. In our examples, we choose $\nu=1$, which puts
the same weight on both terms in the cost function \eqref{eq:ddl_costfunction}.
Minimizers of $\mathcal{L}_{\nu}$ provide optimal solutions to the
DDL principle and can be written as
\begin{equation}
(\mathbf{Q}^{\star},\mathbf{Q}^{inv,\star}\boldsymbol{\mathcal{B}}^{\star})=\underset{\mathbf{Q},\mathbf{Q}^{inv},\boldsymbol{\mathcal{B}}}{\mathrm{argmin}\,\,}\mathcal{L_{\nu}}(\mathbf{Q},\mathbf{Q}^{inv},\boldsymbol{\mathcal{B}}),\label{eq:ddl_argmin}
\end{equation}
or, equivalently, as solutions of the system of equations
\begin{align}
\frac{\partial\mathcal{L}_{\nu}}{\partial Q_{ij}} & =0\quad i=1,...,d,j=1,...,N(d,k)-d,\label{eq:ddl_equations}\\
\frac{\partial\mathcal{L}_{\nu}}{\partial Q_{ij}^{inv}} & =0,\quad i=1,...,d,j=1,...,N(d,k)-d,\\
\frac{\partial\mathcal{L}_{\nu}}{\partial\mathcal{B}_{ij}} & =0\quad i,j=1,...,d.\nonumber 
\end{align}

The optimal solution \eqref{eq:ddl_argmin} does not necessarily coincide
with the Taylor-coefficients of the linearizing transformation \eqref{eq:DDL transformation convergent}.
Instead of giving the best local approximation, $(\mathbf{Q}^{\star},\mathbf{Q}^{inv,\star}\boldsymbol{\mathcal{B}}^{\star})$
approximates the linearizing transformation and the linear dynamics
in a least-squares sense over the domain of the training data. This
means that DDL is not hindered by the convergence properties of the
analytic linearization. Note that for $d=1$, one can estimate the
radius of convergence of \eqref{eq:DDL transformation convergent},
for example, by constructing the Domb-Skyes plot (see \citet{domb_1957}),
or by finding the radius of the circle in the complex plane onto which
the roots of the truncated expansion accumulate under increasing orders
of truncation (see \citet{jentzsch_1916,ponsioen2018}). For $d>1$,
such analysis is more difficult, since multivariate Taylor-series
have more complicated domains of convergence. In our numerical examples,
we estimate the domain of convergence of such analytic linearizations
as the domain on which $\boldsymbol{\kappa}\circ\boldsymbol{\kappa}^{-1}=\mathbf{I}$
holds to a good approximation. As we will see, this domain of convergence
may be substantially smaller the domain of validity of transformations
determined in a fully data-driven way. 

Since the cost function \eqref{eq:ddl_costfunction_separate} is not
convex, the optimization problem \eqref{eq:ddl_argmin} has to be
solved iteratively starting from an initial guess $(\mathbf{Q}_{0},\mathbf{Q}_{0}^{inv},\boldsymbol{\mathcal{B}}_{0})$.
For the examples presented in the paper, we use the Levenberg--Marquardt
algorithm (see \citet{bates88}), but other nonlinear optimization
methods, such as gradient descent or Adam (see \citet{adam_2015})
could also be used. For our implementation, which is available from
the repository \citet{kaszas_haller_24_code}, we used the Scipy and
Pytorch libraries of \citet{scipy2020,pytorch_2017}. In summary,
we will use the following Algorithm \ref{algorithm} in our examples for model reduction
via DDL. 

\begin{algorithm}[H]
 \KwData{ \\ \quad $d$: Dimension of the slow SSM $\mathcal{W(E)}$. \\ \quad $k$: Maximal polynomial order. \\ \quad $\boldsymbol{\varphi},\hat{\boldsymbol{\varphi}} \in \mathbb{R}^{d\times n_{samples}}$: Data matrices of the reduced coordinates and their forward-shifted images.  The input data may need to be truncated, in order to ensure that it lies sufficiently close to the SSM. This needs to be checked, e.g., by time-frequency analysis (see \citet{cenedese22a}). \\
 \quad $\nu\geq 0$: Weight parameter in the cost function \\
 \quad $tol$: Tolerance value for the cost function.}
 \KwResult{\\ \quad $\mathbf{Q}\in\mathbb{R}^{d\times N(d,k)-d}$: Coefficients of the transformation $\boldsymbol{\varphi} = \boldsymbol{\kappa}(\boldsymbol{\gamma})$. \\ \quad $\mathbf{Q}^{inv}\in\mathbb{R}^{d\times N(d,k)-d}$: Coefficients of the inverse transformation $\boldsymbol{\gamma} = \boldsymbol{\kappa}^{-1}(\boldsymbol{\varphi})$. \\ \quad $\boldsymbol{\mathcal{B}}$: The linear dynamics.}
\vspace{5pt}
 Compute the monomials $\mathbf{K}_{2}^{k}\left(\boldsymbol{\varphi}\right)$, $\hat{\mathbf{K}_{2}^{k}}\left(\boldsymbol{\varphi}\right)\in \mathbb{R}^{N(d,k)-d\times n_{samples}}$
 
 Choose an initial guess, either randomly or inferred from DMD \\
 $(\mathbf{Q},\mathbf{Q}^{inv},\boldsymbol{\mathcal{B}})\gets (\mathbf{Q}_{0},\mathbf{Q}_{0}^{inv},\boldsymbol{\mathcal{B}}_{0})$
 
 \While{$\mathcal{L}_\nu(\mathbf{Q},\mathbf{Q}^{inv},\boldsymbol{\mathcal{B}})>tol$}{
  Compute the monomials $\mathbf{K}_{2}^{k}\left(\boldsymbol{\varphi}+\mathbf{Q}\mathbf{K}_{2}^{k}\left(\boldsymbol{\varphi}\right)\right) $

  Evaluate $\mathcal{L}_\nu\left(\mathbf{Q},\mathbf{Q}^{inv},\boldsymbol{\mathcal{B}}\right)=\mathcal{L}^{(1)}(\mathbf{Q}, \boldsymbol{\mathcal{B}})+\nu \mathcal{L}^{(2)}\left(\mathbf{Q},\mathbf{Q}^{inv}\right)$

  Perform {\em Optimization step} (e.g., Levenberg–Marquardt or gradient descent)\\
    $(\mathbf{Q},\mathbf{Q}^{inv},\boldsymbol{\mathcal{B}})\gets (\mathbf{Q}_{new},\mathbf{Q}_{new}^{inv},\boldsymbol{\mathcal{B}}_{new})$
 }
 \caption{Model reduction with DDL \label{algorithm}}
\end{algorithm}

\begin{rem}
\label{rem:loss_function} The expressions \eqref{eq:ddl_costfunction_separate}-\eqref{eq:ddl_costfunction}
define one of the possible choices for the cost function. With $\nu=0$,
\eqref{eq:ddl_costfunction} simply corresponds to a one-step-ahead
prediction with the linearized dynamics. Alternatively, a multi-step
prediction can also be enforced. For a training trajectory $\boldsymbol{\varphi}(t),$
the invariance 
\[
\left[\begin{array}{cc}
\mathbf{I} & \mathbf{Q}\end{array}\right]\left[\begin{array}{c}
\boldsymbol{\varphi}\\
\mathbf{K}_{2}^{k}\left(\boldsymbol{\varphi}\right)
\end{array}\right]=\left[\begin{array}{cc}
\boldsymbol{\mathcal{B}}^{1:m} & \boldsymbol{\mathcal{B}}^{1:m}\mathbf{Q}\end{array}\right]\left[\begin{array}{c}
\boldsymbol{\varphi}(\mathbf{0})\\
\mathbf{K}_{2}^{k}\left(\boldsymbol{\varphi}(\mathbf{0})\right)
\end{array}\right]
\]
could be required, where $\boldsymbol{\mathcal{B}}^{1:m}$ is a tensor
composed of powers of the linear map $\boldsymbol{\mathcal{B}}$.
Optimizing over the entire trajectory is, however, more costly than
simply minimizing \eqref{eq:ddl_costfunction_separate}, and we found
no noticeable improvement in accuracy in our numerical examples. 
\end{rem}

\subsubsection{Relationship with DMD implementations}

Note that setting $\mathbf{Q}=\mathbf{Q}^{inv}=\mathbf{0}$ in the
optimization problem (\ref{eq:ddl_argmin}) turns the problem into
DMD. In this case, the usual DMD algorithm surveyed in the Introduction
returns 
\[
\mathcal{\boldsymbol{\mathcal{B}}}_{0}=\underset{\boldsymbol{\mathcal{B}}}{\mathrm{argmin}\,\,}\underset{}{\mathcal{L}_{0}(\mathbf{0},\mathbf{0},\boldsymbol{\mathcal{B}})},
\]
which is a good initial guess for the non-convex optimization problem
\eqref{eq:ddl_argmin}. More importantly, since Theorem \ref{thm:DDL}
guarantees the existence of a near-identity linearizing transformation,
we expect that the true minimizer is close to the DMD-solution. Therefore,
we may explicitly expand the cost function \eqref{eq:ddl_costfunction_separate}
around the DMD solution as 
\begin{align*}
\mathcal{L_{\nu}}(\mathbf{Q},\mathbf{Q}^{inv},\mathbf{\mathcal{B}}) & =\mathcal{L}_{\nu}(\mathbf{0},\mathbf{0},\boldsymbol{\mathcal{B}}_{0})+D\mathcal{L}_{(\mathbf{0},\mathbf{0},\mathcal{\boldsymbol{\mathcal{B}}}_{0})}\cdot\left(\begin{array}{c}
\mathbf{Q}\\
\mathbf{Q}^{inv}\\
\mathcal{\boldsymbol{\mathcal{B}}}-\boldsymbol{\mathcal{B}}_{0}
\end{array}\right)\\
 & +\frac{1}{2}\left[D^{2}\mathcal{L}_{(\mathbf{0},\mathbf{0},\mathcal{\boldsymbol{\mathcal{B}}}_{0})}\cdot\left(\begin{array}{c}
\mathbf{Q}\\
\mathbf{Q}^{inv}\\
\boldsymbol{\mathcal{B}}-\boldsymbol{\mathcal{B}}_{0}
\end{array}\right)\right]\cdot\left(\begin{array}{c}
\mathbf{Q}\\
\mathbf{Q}^{inv}\\
\boldsymbol{\mathcal{B}}-\boldsymbol{\mathcal{B}}_{0}
\end{array}\right)\\
 & +\mathcal{}\left(\left|\mathbf{Q}\right|^{3},\left|\mathbf{Q}^{inv}\right|^{3},\left|\boldsymbol{\mathcal{B}}-\boldsymbol{\mathcal{B}}_{0}\right|^{3}\right),
\end{align*}
where $D\mathcal{L}_{(\mathbf{0},\mathbf{0},\boldsymbol{\mathcal{B}}_{0})}$
and $D^{2}\mathcal{L}_{(\mathbf{0},\mathbf{0},\boldsymbol{\mathcal{B}}_{0})}$
are the Jacobian and the Hessian of the cost function evaluated at
the DMD solution, respectively. Since the Jacobian is nonsingular
at the DMD solution, the minimum of the quadratic approximation of
the cost function satisfies the linear equation
\begin{equation}
-D\mathcal{L}_{(\mathbf{0},\mathbf{0},\mathcal{\boldsymbol{\mathcal{B}}}_{0})}=D^{2}\mathcal{L}_{(\mathbf{0},\mathbf{0},\boldsymbol{\mathcal{B}}_{0})}\left(\begin{array}{c}
\mathbf{Q}\\
\mathbf{Q}^{inv}\\
\boldsymbol{\mathcal{B}}-\boldsymbol{\mathcal{B}}_{0}
\end{array}\right).\label{eq:approximate_ddl_sol}
\end{equation}

This serves as the first-order correction to the DMD-solution in the
DDL procedure. The equation \eqref{eq:approximate_ddl_sol} is explicitly
solvable and is equivalent to performing a single Levenberg--Marquardt
step on the non-convex cost function \eqref{eq:ddl_costfunction_separate},
with the DMD solution $(\mathbf{0},\mathbf{0},\mathcal{\boldsymbol{\mathcal{B}}}_{0})$
serving as an initial guess.

Minimization of \eqref{eq:ddl_costfunction_separate} leads to a non-convex
optimization problem. Besides computing the leading-order approximation
\eqref{eq:approximate_ddl_sol}, a possible workaround to this challenge
is to carry out the linearization in two steps. First, one can fit
a polynomial map to the reduced dynamics by linear regression. Then,
if the reduced-dynamics is non-resonant, it can be analytically linearized.
\citet{axas22} follow this approach to automatically find the extended
normal form style reduced dynamics on SSMs using the implementation
of SSMTool by \citet{jain23}. Although this procedure does convert
the DDL principle into a convex problem, the drawback is that the
linearization is obtained as a Taylor-expansion, with possibly limited
convergence properties.

\subsubsection{Using DDL to construct spectral foliations\label{subsec:foliation}}

The mathematical foundation of SSM-reduced modeling is that any trajectory
converging to a slow SSM is guaranteed to synchronize up to an exponentially
decaying error with one of the trajectories on the SSM. This follows
from the general theory of invariant foliations by \citet{fenichel71},
when applied to the $d$-dimensional normally hyperbolic invariant
manifold $\mathcal{W}(E)$.\footnote{More precisely, Fenichel's foliation results become applicable after
the wormhole construct in Proposition B1 of \citet{eldering18} is
applied to extend $\mathcal{W}(E)$ smoothly into a compact normally
attracting invariant manifold without boundary. This is needed because
Fenichel's results only apply to compact normally attracting invariant
manifolds with an empty or overflowing boundary, whereas the boundary
of $\mathcal{W}(E)$ is originally inflowing.} The main result of the theory is that off-SSM initial conditions
synchronizing with the same on-SSM trajectory turn out to form a class
$C^{r-1}$ smooth, $\left(n-d\right)$-dimensional manifold, denoted
$\mathcal{F}_{\mathbf{p}}$, which intersects $\mathcal{W}(E)$ in
a unique point $\mathbf{p}\in\mathcal{W}(E)$. The manifold $\mathcal{F}_{\mathbf{p}}$
is called the stable fiber emanating from the base point $\mathbf{p}$.
Fenichel proves that any off-SSM trajectory $\mathbf{x}(t;\mathbf{x}_{0})$
with initial condition $\mathbf{x}_{0}\in\text{\ensuremath{\mathcal{F}_{\mathbf{p}_{0}}} }$
converges to the specific on-SSM trajectory $\mathbf{p}(t;\mathbf{p}_{0})\in\mathcal{W}(E)$
with initial condition $\mathbf{p}_{0}\in\mathcal{W}(E)$ faster than
any other nearby trajectory might converge to $\mathbf{p}(t;\mathbf{p}_{0}).$
Recently, \citet{szalai20} studied this foliation in more detail
under the name ``invariant spectral foliation'', discussed its uniqueness
in an appropriate smoothness class and proposed its use in model reduction.

To predict the evolution of a specific, off-SSM initial condition
$\mathbf{x}_{0}$ up to time $t$ from an SSM-based model, we first
need to relate that initial condition to the base point $\mathbf{p}_{0}$
of the stable fiber $\mathcal{F}_{\mathbf{p}_{0}}$. Next, we need
to run the SSM-based reduced model up to time $t$ to obtain $\mathbf{p}(t;\mathbf{p}_{0})$.
Based on the exponentially fast convergence of the full solution $\mathbf{x}(t;\mathbf{x}_{0})$
to the SSM-reduced solution $\mathbf{p}(t;\mathbf{p}_{0}),$ we obtain
an accurate longer-term prediction for $\mathbf{x}(t;\mathbf{x}_{0})$
using this procedure. Such a longer-term prediction is helpful, for
instance, when we wish to predict steady states, such as fixed points
and limit cycles, from the SSM-reduced dynamics. 

Constructing this spectral foliation directly from data, however,
is challenging for nonlinear systems. Indeed, one would need a very
large number of initial conditions that cover uniformly a whole open
neighborhood of the fixed point in the phase space. For example, while
one or two training trajectories are generally sufficient to infer
accurate SSM-reduced models even for very high-dimensional systems
(see e.g., \citet{cenedese22a,cenedese22b,axas22}), thousands of
uniformly distributed initial conditions in a whole open set of a
fixed point are required to infer accurate spectral foliation-based
models even for low-dimensional systems (see \citet{szalai20}). The
latter number and distribution of initial conditions is unrealistic
to acquire in a truly data-driven setting. 

To avoid constructing the full foliation, one may simply project an
initial condition $x_{0}$ orthogonally to an observed spectral submanifold
$\mathcal{W}(E)$ to obtain $\mathbf{p}_{0}$, but this may result
in large errors if $E$ and $F$ are not orthogonal. In that case,
$\mathcal{W}(E)$ may divert substantially from $E$ (see \citep{ROWLEY_2005,Otto_2022,Otto_2023_covariance}
for a discussion of the limitations of this projection for general
invariant manifolds). 

A better solution is to project $\mathbf{x}_{0}$ orthogonally to
the slow spectral subspace $E$ over which $\mathcal{W}(E)$ is a
graph in an (often large) neighborhood of the fixed point. This approach
assumes that $E$ and $F$ are nearly normal and $\mathcal{W}(E)$
is nearly flat. As the latter is typically the case for delay-embedded
observables (\citet{axas23}), orthogonal projection onto $E$ has
been the choice so far in data-driven SSM-based reduction via the
SSMLearn algorithm (\citet{cenedese21}). This approach has produced
highly accurate reduced-order models in a number of examples (see
\citet{cenedese22a,cenedese22b,axas22}). There are nevertheless examples
in which the linear part of the dynamical system is significantly
non-normal and hence $E$ and $F$ are not close to being orthogonal
(see \citet{Bettini_2024}).

Near hyperbolic fixed points, the use of DDL eliminates the need to
construct involved nonlinear spectral foliations. Indeed, let us assume
that the slow spectral subspace $E$ in Theorem \ref{thm:DDL} can
be decomposed into a direct sum $E=E_{1}\oplus E_{2}$, where $E_{1}$
denotes the slowest spectral subspace with dim $E_{1}=d_{1}$ and
$E_{2}$ denotes the second-slowest spectral subspace with dim $E_{2}=d_{2}$,
as sketched in Fig. \ref{fig:fibers_geometry}. 
\begin{figure}
\centering{}\includegraphics[width=0.8\textwidth]{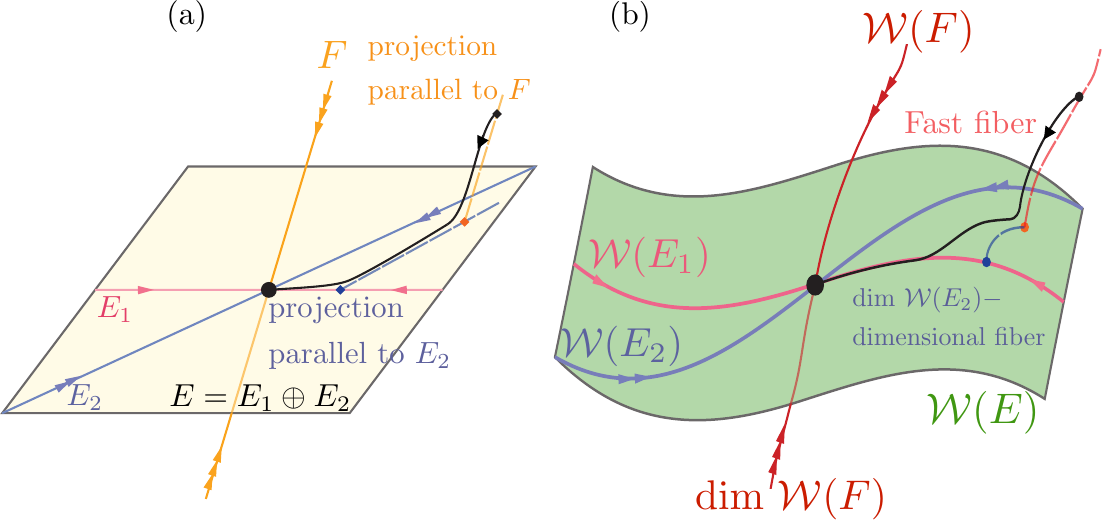}\caption{(a): The linearized phase space geometry governed by the slow spectral
subspace $E=E_{1}\oplus E_{2}$ and the slow invariant foliation within
$E$. (b): Phase space geometry in the original coordinates. \label{fig:fibers_geometry}}
\end{figure}
Reducing the dynamics to the SSM $\mathcal{W}(E)$ is accurate for
transient times given by the decay rate of $E_{2}.$ This initial
reduction can be done simply by a normal projection onto $E$. Inside
$E$, one can simply locate spectral foliations of the DDL-linearized
systems explicitly and map them back to the original nonlinear system
under the DDL transformation $\boldsymbol{\kappa}\left(\boldsymbol{\gamma}\right)$.
The unique class $C^{a}$ foliation of a linear system within $E$
is the family of stable fibers forming the affine space
\[
\mathcal{F}_{\mathbf{p}}=\mathbf{p}+E_{2},
\]
where $\mathbf{p}\in E_{1}$. The trajectories started inside $\mathcal{F}_{\mathbf{p}}$
all synchronize with $\mathbf{p}\in E_{1}$. The linear projection
$\mathbf{P}_{E_{2}}$ onto $E_{1}$ along directions parallel to $E_{2}$,
when applied to an initial condition $\mathbf{y}\in\mathcal{F}_{\mathbf{p}}$,
returns the base point 
\begin{equation}
\mathbf{P}_{E_{2}}\mathbf{y}=\mathbf{p}.\label{eq:linear projection}
\end{equation}

In the nonlinear system \eqref{eq:nonlinear system0}, the leaves
of the smooth foliation within $\mathcal{W}(E)$ 
\[
\mathcal{F}_{\boldsymbol{\kappa}\left(\mathbf{p}\right)}^{0}=\boldsymbol{\kappa}\left(\mathcal{F}_{\mathbf{p}}\right)\subset\mathcal{W}(E),
\]
where $\boldsymbol{\kappa}\left(\mathbf{p}\right)\in\mathcal{W}(E_{1})$
is the image of $\mathbf{p}$ under the mapping $\boldsymbol{\kappa}$
defined in (\ref{eq:linearizing transformation on W(E)}). The SSM
$\mathcal{W}(E)$ can then be parametrized via the foliation
\[
\mathcal{W}(E)=\bigcup_{\mathbf{q}\in\mathcal{W}(E_{1})}\mathcal{F}_{\mathbf{q}}^{0}.
\]

\subsubsection{Using DDL to predict nonlinear forced response from unforced data\label{subsec:predicting forced response from DDL}}

We now discuss how DDL performed near the fixed point of an autonomous
dynamical system can be used to predict nonlinear forced response
under additional weak periodic forcing in the domain of DDL. The addition
of such small forcing is frequent in structural vibration problems
in which the unforced structure (e.g., a beam or disk) is rigid enough
to react with small displacements under practically relevant excitation
levels (see, e.g., \citet{cenedese22a,cenedese22b} for specific examples).

We append system \eqref{eq:more specific nonlinear system} with a
small, time-periodic forcing term $\varepsilon\mathbf{F}(\mathbf{x},t)$
to obtain the system
\begin{equation}
\dot{\mathbf{x}}=\mathbf{A}\mathbf{x}+\tilde{\mathbf{f}}(\mathbf{x})+\varepsilon\mathbf{F}(\mathbf{x},t),\qquad\mathbf{x}\in\mathcal{\mathbb{R}}^{n},\qquad\mathbf{A}=D\mathbf{f}(0),\qquad\tilde{\mathbf{f}}(\mathbf{x})=\mathcal{O}\left(\left|\mathbf{x}\right|^{2}\right),\qquad0\leq\epsilon\ll1,\label{eq:forced system}
\end{equation}
with $\mathbf{F}(\mathbf{x},t)=\mathbf{F}(\mathbf{x},t+T)$ for some
period $T>0$. If the conditions of Theorem \ref{thm:DDL} hold for
the system \eqref{eq:forced system} for $\varepsilon=0$\emph{, }then,
for $\epsilon>0$ small enough, exists a unique $d$-dimensional,
$T$-periodic, attracting spectral submanifold $\mathcal{W}_{\varepsilon}(E,t)\in C^{r}$
of a locally unique attracting $T$-periodic orbit $\mathbf{x}_{\epsilon}(t)$
perturbing from $\mathbf{x}=\mathbf{0}$ (see, e.g., \citet{cabre03,haller16}).
The manifold $\mathcal{W}_{\varepsilon}(E,t)$ is $O(\varepsilon)$
$C^{1}$-close to $\mathcal{W}_{0}(E,t)\equiv\mathcal{W}(E)$ and
hence its reduced dynamics can be parametrized using the reduced observable
vector $\boldsymbol{\varphi}=\boldsymbol{\phi}\vert_{\mathcal{W}(E)}$
in the form \emph{
\begin{align}
\dot{\boldsymbol{\varphi}} & =\mathbf{B}\boldsymbol{\varphi}+\mathbf{q}\left(\boldsymbol{\varphi}\right)+\varepsilon\hat{\mathbf{F}}(\boldsymbol{\varphi},t),\qquad\mathbf{B}=D\boldsymbol{\phi}(\mathbf{0})\mathbf{T}_{E}\boldsymbol{\Lambda}_{E}\left(D\boldsymbol{\phi}(\mathbf{0})\mathbf{T}_{E}\right)^{-1},\quad\mathbf{q}\left(\boldsymbol{\varphi}\right)=\mathcal{O}\left(\left|\boldsymbol{\varphi}\right|^{2}\right),\label{eq:gamma ODE-1}\\
 & \hat{\mathbf{F}}(\boldsymbol{\varphi},t)=\left(D\boldsymbol{\phi}(\mathbf{0})\mathbf{T}_{E}\right)^{-1}\left(\mathbf{I}+D\mathbf{h}\left(\boldsymbol{\varphi},\mathbf{0}\right)\right){}^{-1}\mathbf{F}(\mathbf{0},t)+\mathcal{O}\left(\varepsilon\left|\boldsymbol{\varphi}\right|^{2}\right),\nonumber 
\end{align}
}where we have relegated the details of this calculation to Appendix
\ref{sec:Proof-of-Theorem-DDL-forced}.\emph{ }

Then the unique, $C^{r}$ change of coordinates, 
\begin{equation}
\boldsymbol{\varphi}=\boldsymbol{\kappa}\left(\boldsymbol{\gamma}\right)=\boldsymbol{\gamma}+\boldsymbol{\ell}(\boldsymbol{\gamma}),\label{eq:linearizing transformation on W(E)-1}
\end{equation}
guaranteed by statement (iii) of Theorem \ref{thm:DDL} transforms
the reduced dynamics (\ref{eq:gamma ODE-1}) to its final form
\begin{equation}
\dot{\boldsymbol{\gamma}}=\mathbf{B}\boldsymbol{\gamma}+\varepsilon(\mathbf{I}+D\boldsymbol{\ell}(\boldsymbol{\gamma}))^{-1}\hat{\mathbf{F}}(\mathbf{0},t).\label{eq:linearized flow on SSM-1}
\end{equation}
The transformation is valid on trajectories of \eqref{eq:forced system}
as long as they remain in the domain of definition of the coordinate
change (\ref{eq:linearizing transformation on W(E)-1}).

Note that eq. (\ref{eq:linearized flow on SSM-1}) is a weakly perturbed,
time-periodic nonlinear system. The matrix $\mathbf{B}$ and the nonlinear
terms $\boldsymbol{\ell}(\boldsymbol{\gamma})$ can be determined
using data from the unforced ($\varepsilon=0$) system. As a result,
nonlinear time-periodic forced response can be predicted \emph{solely
from unforced data} by applying numerical continuation to system (\ref{eq:linearized flow on SSM-1})
for $\varepsilon>0$. This is not expected to be as accurate as SSM-based
forced response prediction (see, e.g., \citet{cenedese22a,cenedese22b,axas22,axas23}),
but nevertheless offers a way to make predictions for non-linearizable
forced response based solely on DDL performed on unforced data. These
predictions are valid for forced trajectories that stay in the domain
of convergence of DDL carried out on the unforced system. We will
illustrate such predictions using actual experimental data from fluid
sloshing in Section \ref{subsec:Sloshing}.

Setting $\boldsymbol{\ell}(\boldsymbol{\gamma})=0$ in formula (\ref{eq:linearized flow on SSM-1})
enables us to carry out a forced-response prediction based on DMD
as well. Such a prediction will be fundamentally linear with respect
to the forcing and can only be reasonably accurate for very small
forcing amplitudes, as we will indeed see in examples. There is no
systematic way to model the addition of non-autonomous forcing in
the EDMD procedure, and hence EDMD will not be included in our forced
response prediction comparisons.

We also note, that one might be tempted to solve an approximate version
of \eqref{eq:linearized flow on SSM-1} by assuming
\begin{equation}
\varepsilon(\mathbf{I}+D\boldsymbol{\ell}(\boldsymbol{\gamma}))^{-1}\approx\varepsilon\mathbf{I}.\label{eq:approximate ddl assumption}
\end{equation}
This assumption simplifies the computation of the forced response
of the nonlinear system \eqref{eq:linearized flow on SSM-1} to those
of a simple linear system. Although the forced response computed using
this approximate DDL method turns out to be more accurate than DMD
on our example, we do not recommend this approach. This is because
neglecting the nonlinear effects of the coordinate change in \eqref{eq:linearized flow on SSM-1}
is, in general, inconsistent with $\boldsymbol{\ell}(\boldsymbol{\gamma})\neq0$.
We give more detail on this approximation in Appendix \ref{sec:approximate ddl}
of the Supplementary Information.

\section{Examples}

In this section, we compare the DMD, EDMD and DDL algorithms on specific
examples. When applicable, we also compute the exact analytic linearization
of the dynamical system near its fixed point as a benchmark. On a
slow SSM $\mathcal{W}\left(E\right)$, an observer trajectory $\boldsymbol{\varphi}(t)$,
starting from a select initial condition $\boldsymbol{\varphi}(0)$,
will be tracked as the image of the linearized reduced observer trajectory
$\boldsymbol{\gamma}(t)$ under the linearizing transformation (\ref{eq:linearizing transformation on W(E)-1}):
\begin{equation}
\boldsymbol{\varphi}(t)=\boldsymbol{\kappa}\left(e^{\mathbf{B}t}\boldsymbol{\gamma}(0)\right)=e^{\mathbf{B}t}\boldsymbol{\gamma}(0)+\boldsymbol{\ell}\left(e^{\mathbf{B}t}\boldsymbol{\gamma}(0)\right),\qquad\boldsymbol{\gamma}(0)=\boldsymbol{\kappa}^{-1}\left(\boldsymbol{\varphi}(0)\right).\label{eq:setting for DMD examples}
\end{equation}
When model reduction has also taken place, i.e., when the observable
vector $\boldsymbol{\varphi}$ is not defined on the full phase space,
we will nevertheless provide a prediction in the full phase space
via the parametrization of the slow SSM. 

By Theorem \ref{thm: DMD}, DMD can be interpreted as setting $\boldsymbol{\ell}(\boldsymbol{\gamma})\equiv0$
in (\ref{eq:setting for DMD examples}) and finding the linear operator
$\mathbf{B}$ as a best fit from the available data. In contrast,
DDL finds the linear operator $\mathbf{B}$, the transformation $\boldsymbol{\varphi}=\boldsymbol{\gamma}+\boldsymbol{\ell}(\boldsymbol{\gamma})$,
and its inverse simultaneously. As we explained in Section \ref{subsec:DDL-vs.-EDMD},
EDMD cannot quite be interpreted in terms of the linearizing transformation
(\ref{eq:setting for DMD examples}) as it is an attempt to immerse
the dynamics into a higher dimensional space. For our EDMD tests,
we will use monomials of the observable vector $\boldsymbol{\varphi}$. 

\subsection{1D nonlinear system with two isolated fixed points}

Consider the one-dimensional ODE obtained as the radial component
of the Stuart-Landau equation, i.e., 
\[
\dot{r}=\mu r-r^{3},
\]
which can be rescaled to 
\begin{equation}
\dot{R}=R-R^{3}.\label{eq:1deq}
\end{equation}

For $R\geq0$, the system has a repelling fixed point at $R=0$ and
an attracting one at $R=1$. \citet{page19} show that local expansion
of observables in terms of the Koopman eigenfunctions computed near
each fixed point are possible, but the expansions at the two fixed
points are not compatible with each other and both diverge at $R=\sqrt{2}/2$.
This is a consequence of the more general result that the Koopman
eigenfunctions themselves typically blow up near basin boundaries
(see our Proposition \ref{prop:Koopman blow-up} in Appendix A of
the Supplementary Information). Both DMD and EDMD can nevertheless
be computed from data, even for a trajectory crossing the turning
point at $R=\sqrt{2}/2$, but the resulting models cannot have any
connection to the Koopman operator. 

In each comparison performed on system (\ref{eq:1deq}), we generate
a single trajectory in the domain of attraction of the $R=1$ fixed
point and use it as training data for DMD, EDMD and DDL. In each subplot
of Fig. \ref{fig:1d_example}, the single training trajectory starts
from the intersection of the red horizontal line ``IC of training
trajectory'' with the  $t=0$ dashed line. We then also generate a new test trajectory (black) with
its initial condition denoted with a black dot over the line $t=0$.
We place this initial condition slightly outside the domain of linearization
for system \eqref{eq:1deq} (under the grey line labeled ``Turning
point''). We use DMD, order-$k=5$ EDMD, and DDL trained on a single
training trajectory to make predictions for the black testing trajectory
(not used in the training).

Figure \ref{fig:1d_example}a shows DDL to be the most accurate of
the three methods when applied to forward-time ($t\geq0$) segments
of the test trajectory. If we try to predict the backward-time ($t<0$)
segment of the same trajectory as it leaves the training domain, DDL
diverges immediately upwards, whereas DMD and EDMD diverge more gradually
downwards. As we increase the training domain in Fig \ref{fig:1d_example}b,
DDL continues to be the most accurate in both forward and backward
time until it reaches the domain of its training range in backward
time. At that point, it diverges quickly upwards, while DMD and EDMD
diverge more slowly downwards. 

Importantly, increasing the approximation order for DDL first to $k=10$
then to $k=18$ (see Figs. \ref{fig:1d_example}c-d), makes DDL predictions
more and more accurate in backward time inside the training domain.
At the same time, the same increase in order makes EDMD less and less
accurate inside the same domain. This is not surprising for EDMD because
it seeks to approximate the dynamics within a Koopman-invariant subspaces
for increasing $k$, and Koopman mode expansions blow up at the ``Turning
point line'', as shown both analytically and numerically by \citet{page19}.
Interestingly, however, EDMD becomes less accurate even within the
domain of linearization under increasing $k$. This is clearly visible
in Fig. \ref{fig:1d_example}d which shows spurious, growing oscillations
in the EDMD predictions close to the $R=1$ fixed point.

In summary, of the three methods tested, DDL makes the most accurate
predictions in forward time. This remains true in backward time as
longs as the trajectory remains in the training range used for the
three methods, even if this range is larger than the theoretical domain
of linearization. Inside the training range, an increase of the order
$k$ of the monomials used increases the accuracy of DDL but introduces
growing errors in EDMD. 

\begin{figure}
\centering{}\includegraphics[width=1\textwidth]{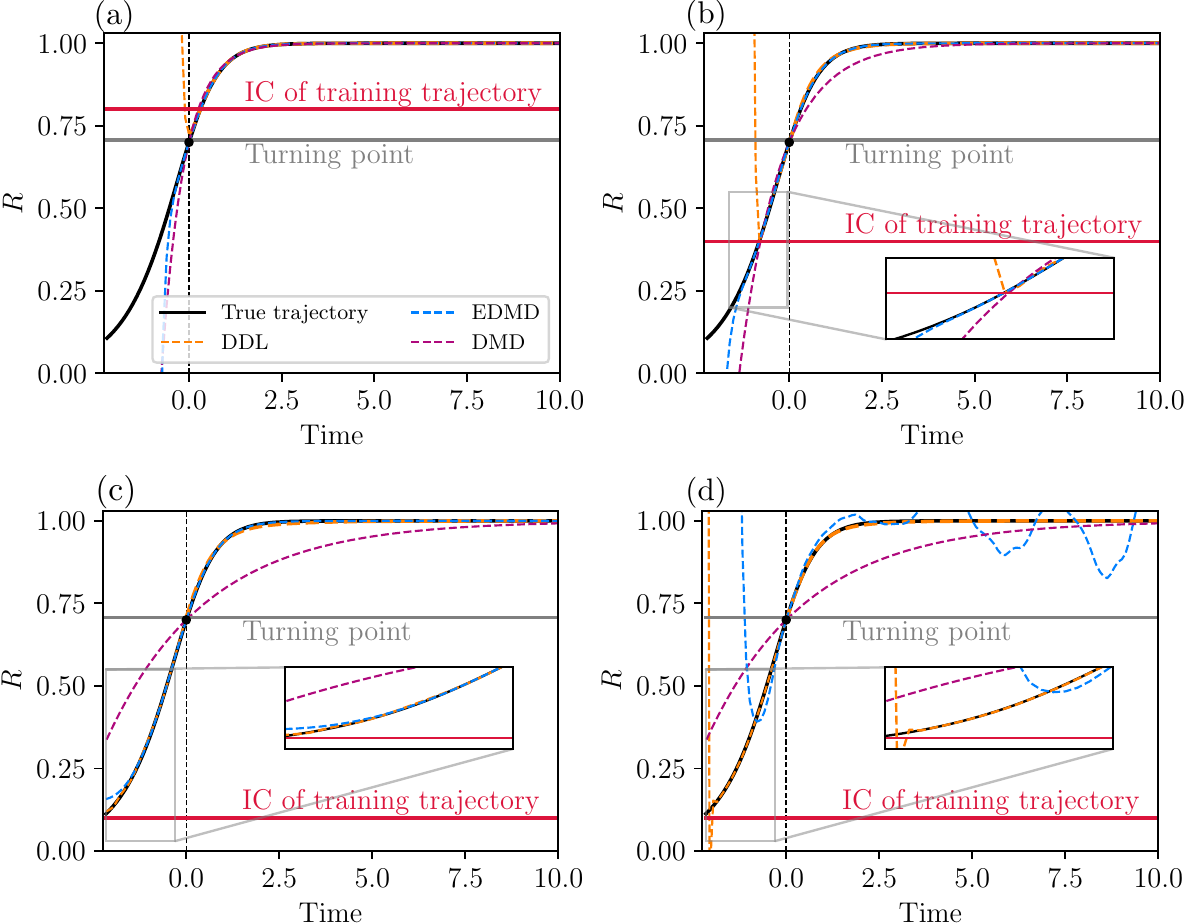}\caption{Predictions of DMD, EDMD and DDL on trajectories of (\ref{eq:1deq}).
(a) Training trajectory starts inside the domain of convergence of
the linearization, i.e. $R(0)=0.8$ (see \citet{page19}). For both
DDL and EDMD the order of the monomials used is $k=5$. (b) Same for
a different training trajectory with $R(0)=0.4$ and $k=5$ (c) Same
for $R(0)=0.1$ and $k=10$ (d) Same for $R(0)=0.1$ and $k=18$.
\label{fig:1d_example}}
\end{figure}

\subsection{3D linear system studied via nonlinear observables}

\citet{wu21} studied the ability of DMD to recover a 3D linear system
based on the time history of three nonlinear observables evaluated
on the trajectories of the system. To define the linear system, they
use a block-diagonal matrix $\boldsymbol{\Lambda}$ and a basis transformation
matrix $\mathbf{R}$ of the form
\begin{equation}
\boldsymbol{\Lambda}=\left(\begin{array}{ccc}
a & -b & 0\\
b & a & 0\\
0 & 0 & c
\end{array}\right),\quad a,b,c\in\mathbb{R},\qquad\mathbf{R}=\left(\begin{array}{ccc}
1 & 0 & \sin\theta_{1}\cos\theta_{2}\\
0 & 1 & \sin\theta_{1}\sin\theta_{2}\\
0 & 0 & \cos\theta_{2}
\end{array}\right),\label{eq:3d_lin_system1}
\end{equation}
to define the linear discrete dynamical system 
\begin{equation}
\mathbf{x}(n+1)=\left(\mathbf{R}\boldsymbol{\Lambda}\mathbf{R}^{-1}\right)\mathbf{x}(n).\label{eq:3D discrete example}
\end{equation}
The linear change of coordinates $\mathbf{R}$ rotates the real eigenspace
of $\boldsymbol{\Lambda}$ corresponding to the eigenvalue $c$ and
hence introduces non-normality in system (\ref{eq:3D discrete example}).
This system is then assumed to be observed via a 3D nonlinear observable
vector
\begin{equation}
\mathbf{y}(\mathbf{x})=\left(\begin{array}{c}
x_{1}+0.1\left(x_{1}^{2}+x_{2}x_{3}\right)\\
x_{2}+0.1\left(x_{2}^{2}+x_{1}x_{3}\right)\\
x_{3}+0.1\left(x_{3}^{2}+x_{1}x_{2}\right)
\end{array}\right).\label{eq:3d_lin_system2}
\end{equation}
Ideally, DMD should closely approximate the linear dynamics of system
(\ref{eq:3D discrete example}) because the observable function defined
in eq. (\ref{eq:3d_lin_system2}) is close to the identity and has
only weak nonlinearities. \citet{wu21} find, however, that this system
poses a challenge for DMD, which produced inaccurate predictions for
the spectrum of $\mathbf{R}\boldsymbol{\Lambda}\mathbf{R}^{-1}$. 

Following one of the parameter settings of \citet{wu21}, we set $a=0.45\sqrt{3}$,
$b=0.5$, $c=0.6$, $\theta_{1}=1.5$, and $\theta_{2}=0$. We initialize
three training trajectories with $\left\Vert \mathbf{x}(0)\right\Vert <1,$
each containing $100$ iterations of system (\ref{eq:3D discrete example}).
We then compute the predictions of a $5^{th}$ order DDL model and
compare to those of DMD and EDMD on a separate test trajectory not
used in training these three methods. The predictions and the spectrum
obtained from the three methods are shown in Fig. \ref{fig:3d_lin_example}. 

\begin{figure}
\centering{}\includegraphics[width=0.9\textwidth]{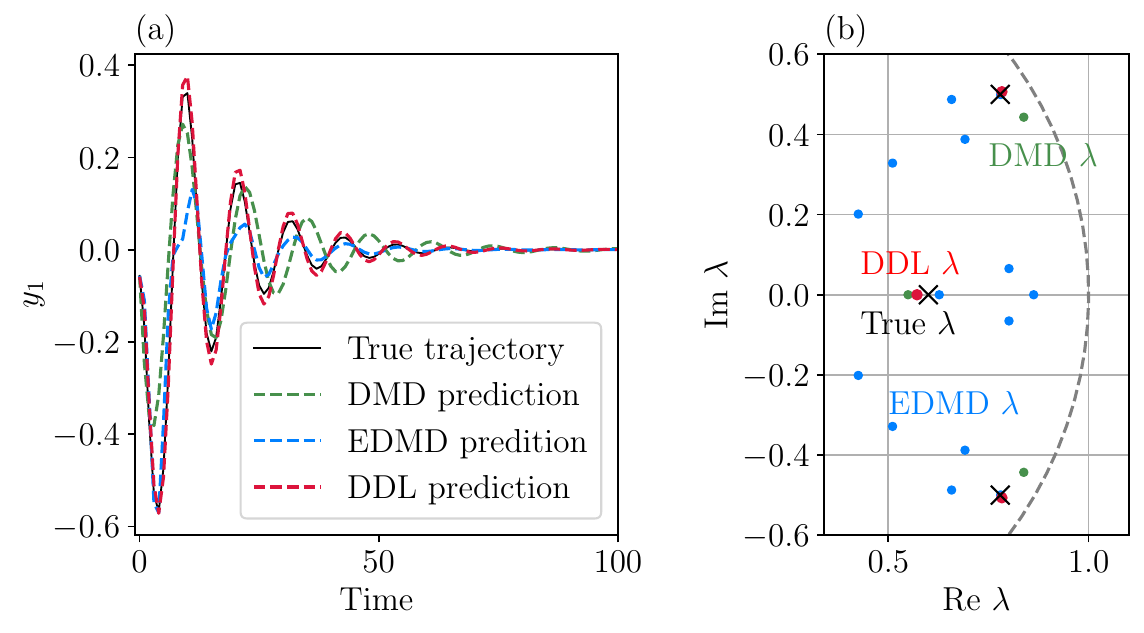}\caption{Predictions by DMD, EDMD, and DDL on the discrete dynamical system
\eqref{eq:3d_lin_system1},\eqref{eq:3d_lin_system2}.(a) Predicted
and true $y_{1}-$components of a test trajectory. (b) Spectra identified
by DMD, EDMD, and DDL superimposed on the true spectrum (marked by
crosses). The dashed line represents the unit circle. \label{fig:3d_lin_example}}
\end{figure}

The predictions of DMD and EDMD can only be considered accurate for
very low amplitude oscillations, while DDL returns accurate predictions
throughout the whole trajectory. This example consists of linear dynamics
and monomial observables of the state, and hence should be an ideal
test case for EDMD. Yet, EDMD is inaccurate in identifying the spectrum
of system (\ref{eq:3D discrete example}). Indeed, as seen in Fig.
\ref{fig:3d_lin_example}b, a number of spurious eigenvalues arise
from EDMD, both real and complex. DMD performs clearly better but it
is still markedly less accurate than DDL. These inaccuracies in the
predictions of EDMD and DMD spectra are also reflected by considerable
errors in their predictions for trajectories, as seen in Fig. \ref{fig:3d_lin_example}a.
In contrast, DDL produces the most accurate prediction for the test
trajectory. 

\subsection{Damped and periodically forced Duffing equation \label{subsec:Duffing}}

We consider the damped and forced Duffing equation

\begin{align}
\dot{x} & =y,\nonumber \\
\dot{y} & =x-x^{3}-dy+\varepsilon\cos\Omega t,\label{eq:duffing_non_transformed}
\end{align}
with damping coefficient $d=0.0141$, forcing frequency $\Omega$
and forcing amplitude $\varepsilon$. We perform a change of coordinates
$\left(x,y\right)\mapsto\boldsymbol{\varphi}=\left(\varphi_{1},\varphi_{2}\right)$
that moves the stable focus at $(x,y)=(1,0)$ to the origin and makes
the linear part block-diagonal. The resulting system is of the form
\begin{equation}
\dot{\boldsymbol{\varphi}}=\mathbf{A}\boldsymbol{\varphi}+\mathbf{f}\left(\boldsymbol{\varphi}\right)+\varepsilon\hat{\mathbf{F}}(t),\qquad\mathbf{f}\left(\boldsymbol{\varphi}\right)=\mathcal{O}\left(\left|\boldsymbol{\varphi}\right|^{2}\right),\label{eq:duffing_transformed}
\end{equation}
where 
\begin{equation}
\mathbf{A}=\begin{pmatrix}-\alpha & -\omega\\
\omega & -\alpha
\end{pmatrix},\qquad\omega=1.4142,\quad\alpha=0.00707,\label{eq:duffing_linearpart}
\end{equation}

and $\hat{\mathbf{F}}(t)$ is the transformed image of the physical
forcing vector in \eqref{eq:duffing_non_transformed}. We first consider
the unforced system with $\varepsilon=0$. In this case, the 2D slow
SSM of the fixed point coincides with the phase space $\mathbb{R}^{2}$
and hence no further model reduction is possible. However, since the
non-resonance conditions (\ref{eq:nonresonance condition2-2}) hold
for the linear part \eqref{eq:duffing_linearpart}, the system is
analytically linearizable near the origin. The linearizing transformation
and its inverse can both be computed from eq. \eqref{eq:duffing_transformed},
as outlined in eq. \eqref{eq:DDL transformation convergent}. For
reference, we carry out this linearization analytically up to order
$k=9$. The Taylor series of the linearization is estimated to converge
for $\left|\boldsymbol{\varphi}\right|<R_{crit}\approx0.15$. The
details of the calculation can be found in the repository \citep{kaszas_haller_24_code}.

We now compare the analytic linearization results it to DMD, EDMD
and DDL, with all three trained on the same three trajectories, launched
both inside and outside the domain of convergence of the analytic
linearization. The polynomial order of approximation is $k=5$ for
both the EDMD and the DDL algorithms. The performance of the various
methods is compared in Fig. \ref{fig:duffing_1}. Close to the fixed
point, in the domain of convergence of the analytic linearization,
all three methods perform well. Moving away from the fixed point,
the analytic linearization is no longer possible. Both DMD and EDMD
perform worse, while DDL continues to accurately linearize the system
even outside the domain of convergence of the analytic linearization.

\begin{figure}
\begin{centering}
\includegraphics[width=1\textwidth]{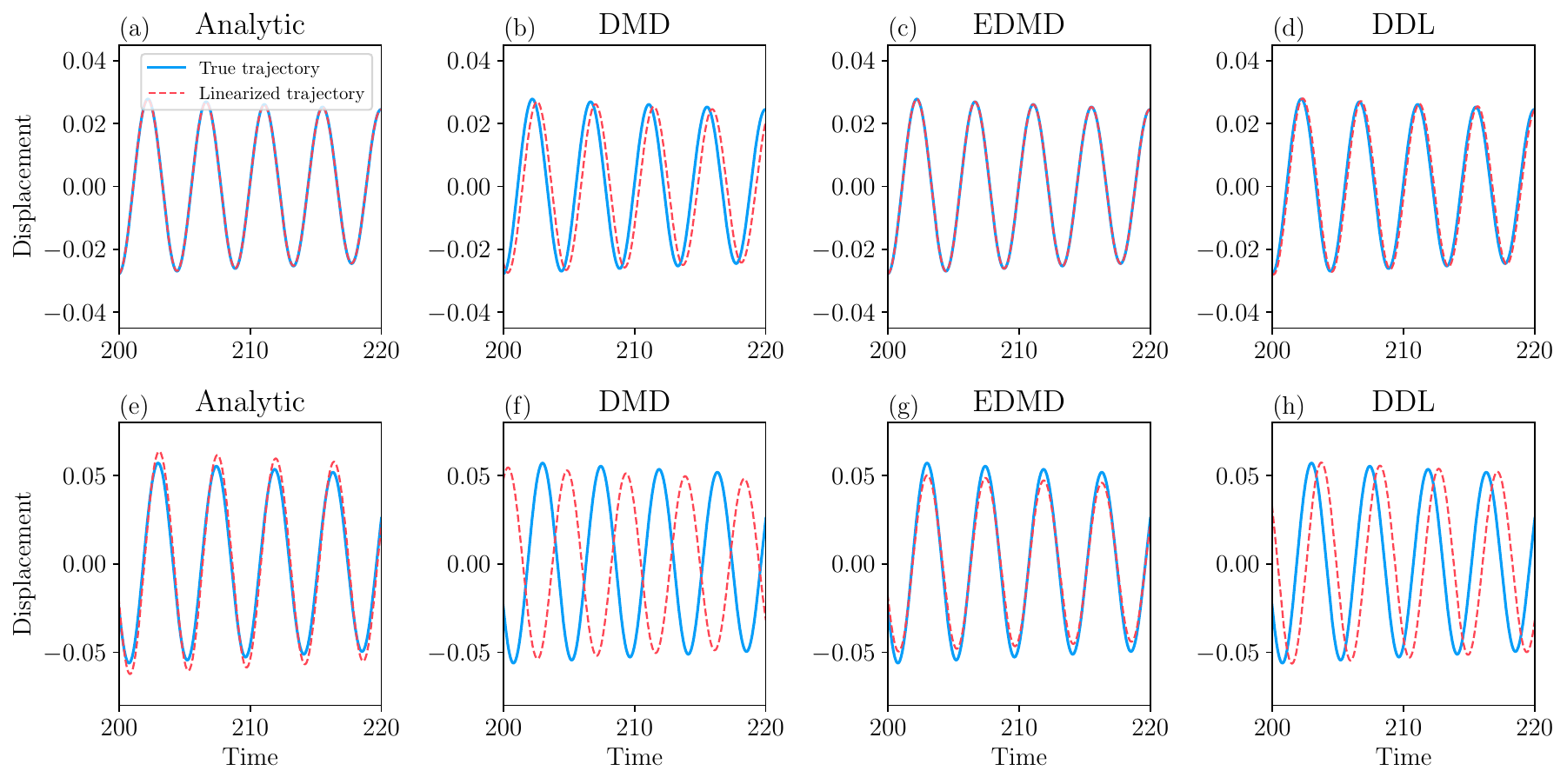}
\par\end{centering}
\caption{\label{fig:duffing_1}Comparison of the time evolution of the linearized
trajectories (red) and the full trajectories of the nonlinear system
(\ref{eq:duffing_transformed}) (blue), \eqref{eq:duffing_transformed}.
(a)-(d): Analytic linearization, DMD, EDMD and DDL models trained
and evaluated on trajectories inside the domain of convergence. (e)-(h):
Same as (a)-(d) but outside the domain of convergence of the analytic
linearization.}
\end{figure}

Using formula (\ref{eq:linearized flow on SSM-1}) and our DDL-based
model, we can also predict the response of system \eqref{eq:duffing_transformed}
for the forcing term of the form 
\begin{equation}
\varepsilon\hat{\mathbf{F}}(t)=\varepsilon\left(\begin{array}{c}
-0.006\\
1.225
\end{array}\right)\cos\Omega t,\label{eq:duffing_forcing}
\end{equation}
without using any data from the forced system. As the forced DDL model
(\ref{eq:linearized flow on SSM-1}) is nonlinear, it can capture
non-linearizable phenomena such as coexisting of stable and unstable
periodic orbits arising under the forcing. We can also make a forced
response prediction from DMD simply by setting $D\boldsymbol{\ell}(\boldsymbol{\gamma})=\boldsymbol{0}$
in eq. (\ref{eq:linearized flow on SSM-1}). As an inhomogeneous linear
system of ODEs, however, this forced DMD model cannot predict coexisting
stable and unstable periodic orbits.

In Fig. \ref{fig:duffing_2}, we compare the forced predictions of
the analytic linearization, DMD, and DDL to those computed from the
nonlinear system directly via the continuation software COCO of \citet{dankowicz2013}.
Since the forced and linearized systems are also nonlinear, we use
the same continuation software to determine the stable and unstable
branches of periodic orbits. 

As expected, the analytic linearization is accurate while the forced
response is inside the domain of convergence but deteriorates quickly
for larger amplitudes. DMD gives good predictions for the peaks of
the forced response diagrams, but cannot account for any of the nonlinear
softening behavior, i.e., the overhangs in the curves that signal
multiple coexisting periodic responses at the same forcing frequency.
In contrast, while the DDL model of order $k=5$ starts becoming inaccurate
for peak prediction at larger amplitudes outside the domain of analytic
linearization, it continues to capture accurately the overhangs arising
from non-linearizable forced response away from the peaks. Notably,
DDL even identifies the unstable branches (in dashed lines) of the
periodic response accurately. For completeness, we also show results
of approximate DDL, by assuming \eqref{eq:approximate ddl assumption}
in Appendix \ref{sec:approximate ddl}.

\begin{figure}
\centering{}\includegraphics[width=0.9\textwidth]{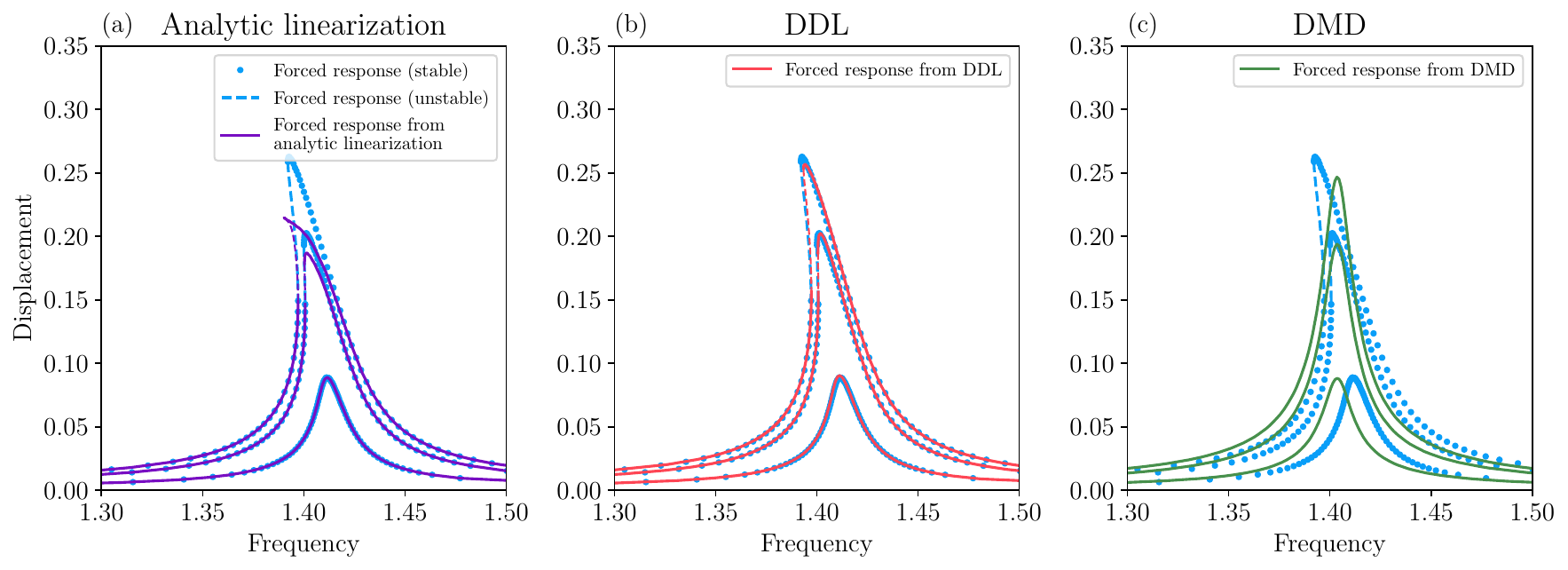}\caption{Periodic response of the Duffing oscillator under the forcing \eqref{eq:duffing_transformed}.
The three distinctly colored forced response curves correspond to
$\varepsilon=0.001,0.002,0.0028$. (a) Actual nonlinear forced response
from numerical continuation (blue) and prediction for it from analytic
linearization (purple) (b) Forced response predictions from DDL (red)
(c) Forced response predictions from DMD (green). The training data
for panels (b) and (c) is the same unforced trajectory data set as
the one used in Fig. \ref{fig:duffing_1}. \label{fig:duffing_2}}
\end{figure}

\subsection{Water sloshing experiment in a tank\label{subsec:Sloshing}}

In this section, we analyze experimental data generated by \citet{bauerlein2021}
for forced and unforced fluid sloshing in a tank. Previous studies
of this data set used nonlinear SSM-reduction to predict forced response
( \citet{cenedese22a,axas22,axas23}). Here we will use DMD and DDL
to extract and compare linear reduced-order models from unforced trajectory
data, then use them to predict and verify forced response curves obtained
from forced trajectory data. Neither DMD nor DDL is expected to outperform
the fully nonlinear approach of SSM reduction, so we will only compare
them against each other.

The tank in the experiments is mounted on a platform that is displaced
sinusoidally in time with various forcing amplitudes and frequencies
(Fig. \ref{fig:sloshing}(a)). To train DMD and DDL, we use unforced
sloshing data obtained by freezing the movement of the tank near a
resonance and recording the ensuing decaying oscillations of the water
surface with a camera under they die out. The resulting videos serve
as input data to our analysis. Specifically, the horizontal position
of the center of mass of the fluid is extracted from each video frame
tracked and used as the single scalar observable. 

During such a resonance decay experiment, the system approaches its
stable unforced equilibrium via oscillations that are dominated by
a single mode. In terms of the phase space geometry, this means an
approach to a stable fixed point along its 2D slow SSM $\mathcal{W}\left(E\right)$
tangent to the slowest 2D real eigenspace $E$ . As we only have a
single observable from the videos, we use delay embedding to generate
a larger observable space that can accommodate the 2D manifold $\mathcal{W}\left(E\right)$.
As discussed by \citet{cenedese22a}, we need an at least 5D observable
space for this purpose by the Takens embedding theorem. In this space,
$\mathcal{W}\left(E\right)$ turns out to be nearly flat for short
delays (see \citet{axas23}), which allows us to use a linear approximation
for its parametrization. The reduced coordinates on $\mathcal{W}\left(E\right)\approx E$
can then be identified via a singular value decomposition of the data
after one removes initial transients from the experimental data. The
end of the transients can be identified as a point beyond which a
frequency analysis of the data shows only one dominant frequency,
the imaginary part of the eigenvalue corresponding to $E$. 

All this analysis has been carried out using the publicly available
SSMLearn package \citet{cenedese21}. With $\mathcal{W}\left(E\right)$
identified, we use the DDL method with order $k=5$ to find the linearizing
transformation and the linearized dynamics on $\mathcal{W}\left(E\right)$.
In Fig \ref{fig:sloshing}(b) we show the prediction of the model
on a decaying trajectory reserved for testing. The displacements are
reported as percentage values, with respect to the depth of the tank.
In Fig. \ref{fig:sloshing}c-d, we show predictions from the DMD and DDL models 
for the forced response, compared with the experimentally observed
response. Since the exact forcing function is unknown, we follow the
calibration procedure outlined by \citet{cenedese22a} to find an
equivalent forcing amplitude in the reduced-order model. 

We present data for three forcing amplitudes. The DDL predictions
are accurate up to $0.17\%$ amplitude, even capturing the softening
trend. The largest-amplitude forcing resulted in response significantly
outside the range of the training data; in this range, we were unable
to find the converged forced response from DDL. We also show the corresponding
DMD-predictions in Fig. \ref{fig:sloshing}c. Although the linear
response can formally be evaluated for any forcing amplitude, DMD
shows no trace of the softening trend, and is even inaccurate for
low forcing amplitudes. 

\begin{figure}
\centering{}\includegraphics[width=0.99\textwidth]{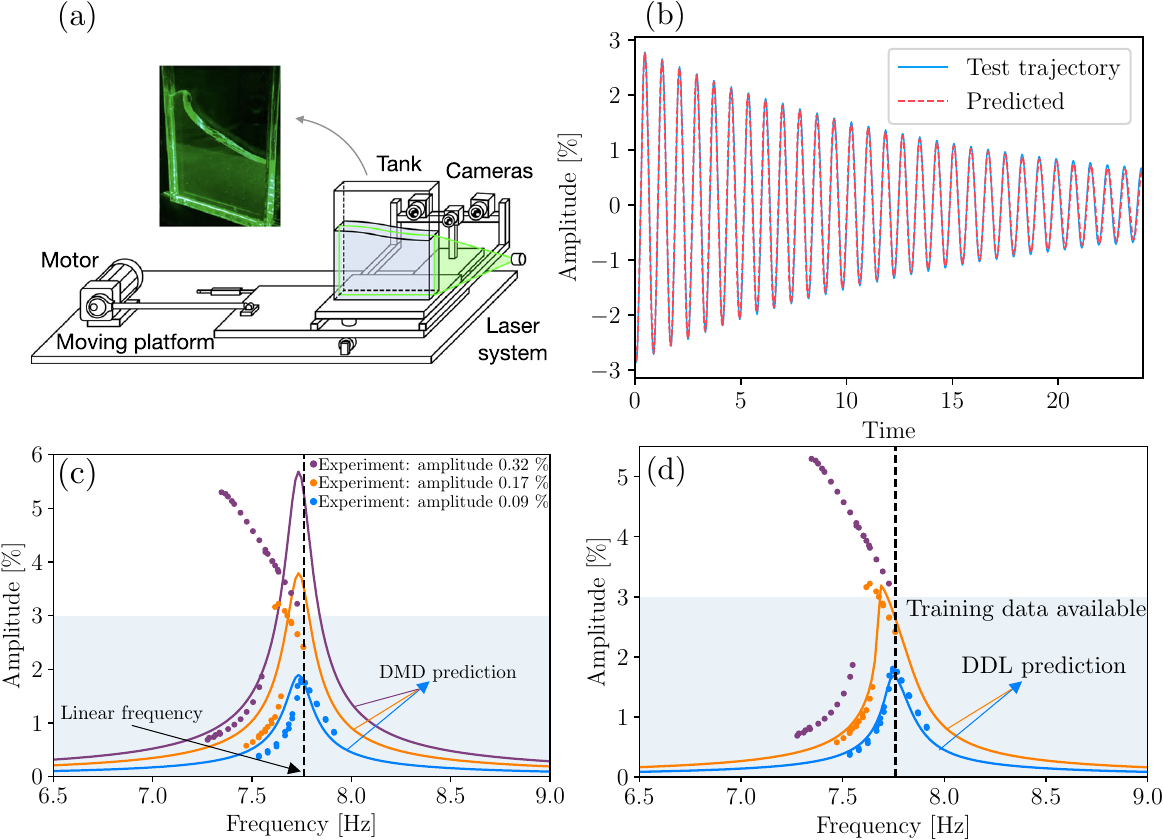}\caption{(a) Schematic representation of the experimental setup (adopted from
\citet{cenedese22a}). (b) Prediction of the decay of a test trajectory
with order-$k=5$ DDL. (c) Prediction of the forced response from
DMD. (d) Prediction of the forced response from DDL. Light shading
indicates the domain, in which training data for DMD and DDL was available.
\label{fig:sloshing}}
\end{figure}

\subsection{Model reduction and foliation in a nonlinear oscillator chain \label{subsec:Oscillator-chain}}

As a final example, we consider the dynamics of a chain of nonlinear
oscillators, which has been analyzed in the SSMLearn package \citet{cenedese21}.
Denoting the positions of the oscillators as $q_{i}$ for $i=1,...,5$,
we assume that the springs and dampers are linear, except for the
first oscillator. The non-dimensionalized equations of motion can
be written as 

\begin{equation}
\mathbf{M\ddot{\mathbf{q}}}+\mathbf{C}\dot{\mathbf{q}}+\mathbf{K}\mathbf{q}+\mathbf{f}(\mathbf{q},\dot{\mathbf{q}})=\mathbf{0},\label{eq:oscillator chain second order}
\end{equation}
where $\mathbf{M}=\mathbf{I}$; the springs have the same linear stiffness
$k=1$ which is encoded in $\mathbf{K}$ via nearest-neighbor coupling.
The damping is assumed to be proportional, i.e., we specifically set
$\mathbf{C}=0.002\mathbf{M}+0.005\mathbf{K}$. 

Three numerically generated training trajectories show decay to the
$\mathbf{q}=\mathbf{0}$ fixed point, as expected from the damped
nature of the linear part of the system. In this example, we also
seek to capture some of the transients, which motivates us to select
the slow SSM $\mathcal{W}(E)$ to be 4D, tangent to the spectral subspace
$E=E_{1}\oplus E_{2}$ spanned by the the slowest mode ($E_{1})$
and the second slowest mode ($E_{2}$). As the mode corresponding
to $E_{2}$ does disappear over time from the decaying signal, there
is no resonance between the eigenvalues and hence Theorem \ref{thm:DDL}
is applicable. As already noted, numerical data from a generic physical
system described by eq. (\ref{eq:oscillator chain second order})
will be free from resonances. An exception is a $1\colon1$ resonance
arising from a perfect symmetry, but this resonance is not excluded
by Theorem 4 and has is amenable to DDL.

Within the 4D SSM $\mathcal{W}(E)$, we also demonstrate how to optimally
reduce the dynamics to its 2D slowest SSM $\mathcal{W}(E_{1})$. As
explained in Section \ref{subsec:foliation}, to find the trajectory
in $\mathcal{W}(E_{1})$ with which a given trajectory close to $\mathcal{W}(E)$
ultimately synchronizes, we need to project along a point $\mathbf{q}_{0}$
of the full trajectory $\mathbf{q}(t)$ first onto $\mathcal{W}(E)$
orthogonally to obtain a point $\mathbf{q}_{0}^{4D}\in\mathcal{W}(E)$.
We then need to identify the stable fiber $\mathcal{F}_{\mathbf{q}_{0}^{2D}}$
in $\mathcal{W}(E)$ for which $\mathbf{q}_{0}^{4D}\in\mathcal{F}_{\mathbf{q}_{0}^{2D}}$
holds. Finally, one has to project along $\mathcal{F}_{\mathbf{q}_{0}^{2D}}$
to locate its base point $\mathbf{q}_{0}^{2D}\in\mathcal{W}(E_{1})$.
The trajectory through $\mathbf{q}_{0}^{2D}$ in $\mathcal{W}(E_{1})$
will then be the one with which the full trajectory $\mathbf{q}(t)$
will synchronize faster than with any other trajectory. As noted in
Section \ref{subsec:foliation}, computing the full nonlinear stable
foliation 
\begin{equation}
\mathcal{W}(E)=\bigcup_{\mathbf{q}_{0}^{2D}\in\mathcal{W}(E_{1})}\mathcal{F}_{\mathbf{q}_{0}^{2D}}^{0}\label{eq:foliation in oscillator chain}
\end{equation}
of $\mathcal{W}(E)$ is simple in the linearized coordinates, in which
it can be achieved via a linear projection along the faster eigenspace
$E_{2}$. 

We use a third-order polynomial approximation for $\mathcal{W}(E)$
based on the three training trajectories. The polynomials depend on
the reduced coordinates we introduce along $E$ using a singular value
decomposition of the trajectory data. These reduced coordinates are
shown in Fig. \ref{fig:oscillator_chain_1}, where we show a representative
training trajectory, the 2D slow SSM $E_{1}$, as well as the foliation
\eqref{eq:foliation in oscillator chain} computed from DDL for this
specific problem. 

\begin{figure}
\centering{}\includegraphics[width=0.99\textwidth]{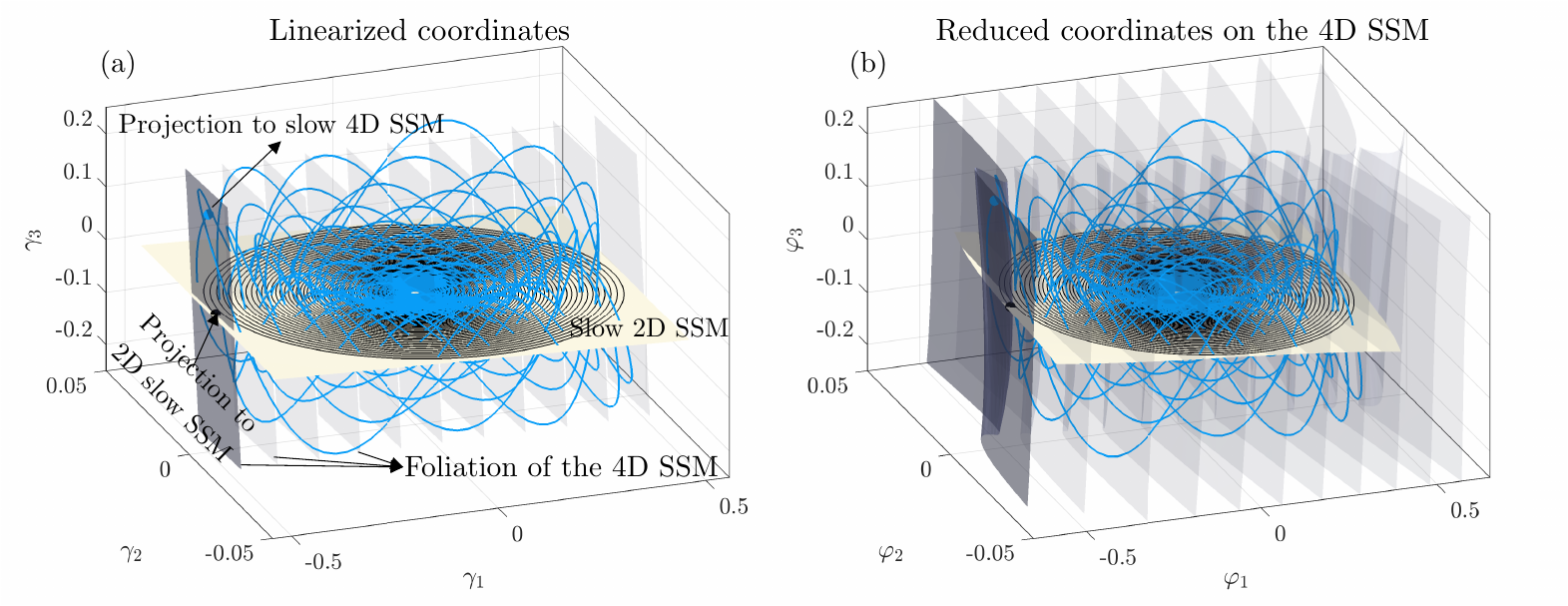}\caption{Reduced coordinates of the 4D SSM $\mathcal{W}(E)$ for the oscillator
chain. The slow 2D SSM $\mathcal{W}(E_{1})$, a typical trajectory
and its projection to the slow SSM along the fibers $\mathcal{F}_{\mathbf{q}_{0}^{2D}}^{0}$
are also shown. Panel (a) shows the linearized coordinates computed
from DDL, and (b) shows their image under the inverse of the linearizing
transformation. The order of approximation used in DDL is 3. \label{fig:oscillator_chain_1}}
\end{figure}

We also evaluate the DDL-based predictions on $\mathcal{W}(E)$ and
$\mathcal{W}(E_{1})$ by comparing them to predictions from DMD and
EDMD. Performing DMD and EDMD with the data first projected to $E$
can be interpreted as finding the linear approximation to the dynamics
in $\mathcal{W}(E)$. Similarly, performing DMD and EDMD with the
data first projected to $E_{1}$ can be interpreted as finding the
linear approximation to the dynamics in $\mathcal{W}(E_{1})$. These
are to be contrasted with performing DDL that finds the linearized
reduced dynamics within $\mathcal{W}(E)$, which in turn contains
the linearized reduced dynamics within $\mathcal{W}(E_{1})$. Figure
\ref{fig:oscillator_chain_2} shows that DMD and EDMD both perform
similarly to DDL on $\mathcal{W}(E)$. However, the 2D DMD and EDMD
results obtained for $\mathcal{W}(E_{1})$ are noticeably less accurate
than the DDL results. 

\begin{figure}
\centering{}\includegraphics[width=0.6\textwidth]{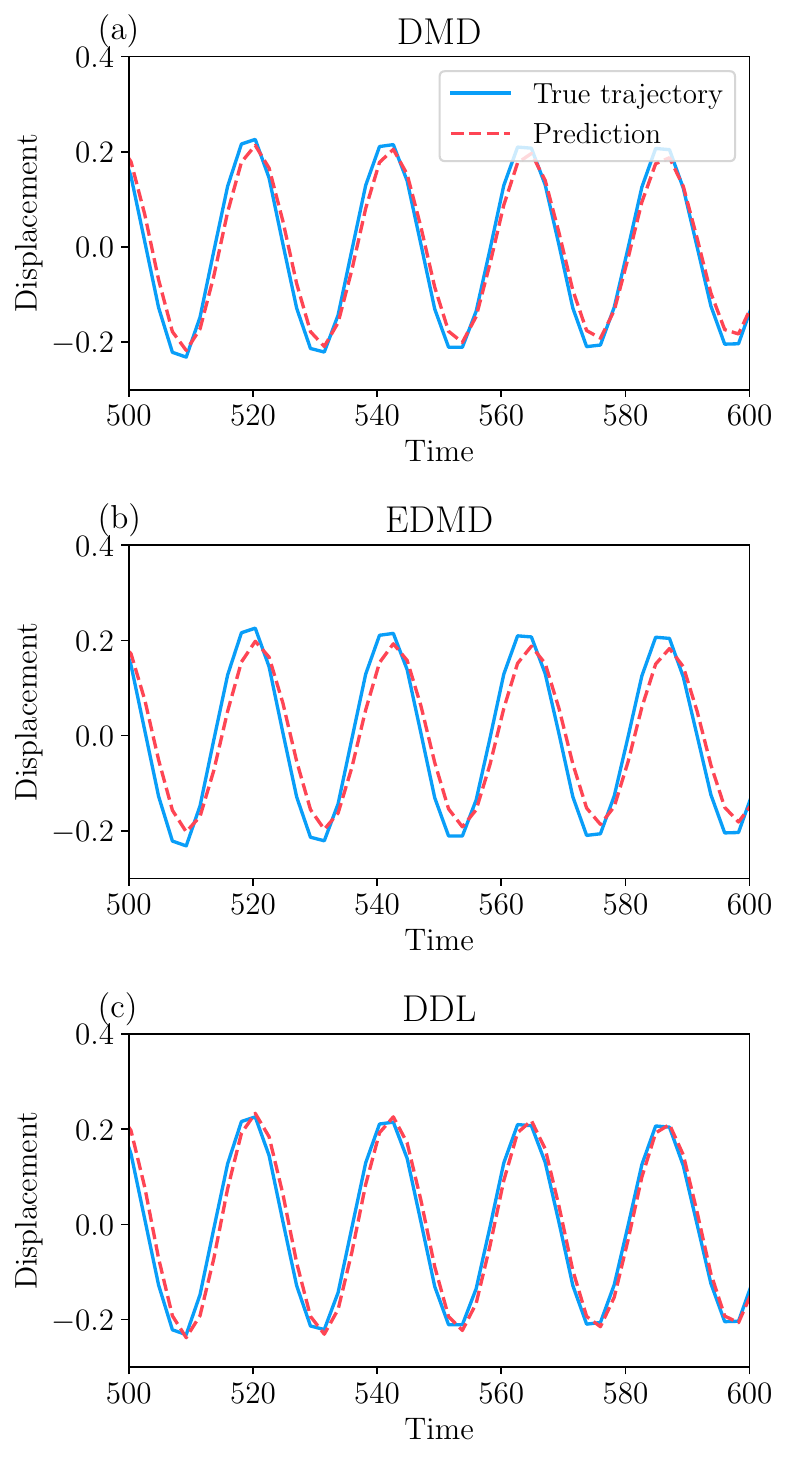}\caption{Predictions of (a) DMD (b) EDMD and (c) DDL models on a test trajectory
of the oscillator chain. The order of approximation for DDL and EDMD
is $k=3$ . \label{fig:oscillator_chain_2}}
\end{figure}

\section{Conclusions}

We have given a new mathematical justification for the broadly used
DMD procedure to eliminate the shortcomings of prior proposed justifications.
Specifically, we have shown that under specific non-degeneracy conditions
on the $n$-dimensional dynamical system, on $d\leq n$ observable
functions defined for that system, and on the actual data from these
observables, DMD gives a leading-order approximation to the observable
dynamics on an attracting $d$-dimensional spectral submanifold (SSM)
of the system. 

This result covers both discrete and continuous dynamical systems
even for $n=\infty$. Our Theorem 1 only makes explicit non-degeneracy
assumptions on the observables which will hold with probability one
in practical applications. This is to be contrasted with prior approaches
to DMD and its variants based on the Koopman operator, whose assumptions
on the observables fail with probability one on generic observables.

Our approach also yields a systematic procedure that gradually refines
the leading-order DMD approximation of the reduced observable dynamics
on SSMs to higher orders. This procedure, which we call data-driven
linearization (DDL), builds a nonlinear coordinate transformation
under which the observable becomes linear on the attracting SSM. We
have shown on several examples how DDL indeed outperforms DMD and
extended DMD (EDMD), as expected. In addition to this performance
increase, DDL also enables a prediction of truly nonlinear forced
response from unforced data within its training range. Although we
have only illustrated this for periodically forced water sloshing
experiments in a tank, recent results on aperiodically time-dependent
SSMs by \citet{haller24} allow us to predict more general forced
response using DDL trained on unforced observable data.

Despite all these advantages, DDL (as any linearization method) remains
applicable only in parts of the phase space where the dynamics are
linearizable. Yet SSMs continue to exist across basin boundaries and
hence are able to carry characteristically nonlinear dynamics with
multiple coexisting attractors. For such nonlinearizable dynamics,
data-driven nonlinear SSM-reduction algorithms, such as SSMLearn and
fastSSM, are preferable and have been showing high accuracy and predictive
ability in a growing number of physical settings (see, e.g., \citet{cenedese22a,cenedese22b,axas22,kaszas22,alora23b,kaszas24,liu24b}).\vskip 1cm

\textbf{Acknowledgement} We are grateful to Matthew Kvalheim and Shai
Revzen for several helpful comments on an earlier version of this
manuscript.\vskip 1cm

\textbf{Funding }This work was supported by the Swiss National Science
Foundation.

\vskip 1cm

\textbf{Data Availability} All data and codes used in this work are
downloadable from the repository \url{https://github.com/haller-group/DataDrivenLinearization}.

\vskip 1cm

\textbf{Competing Interests }The authors declare that they have no
conflict of interest.

\vfil\eject

\setcounter{page}{1}
\begin{center}
{\LARGE{}Supplementary Information for}{\LARGE\par}
\par\end{center}

\begin{center}
\emph{\LARGE{}Data-Driven Linearization of Dynamical Systems}{\LARGE{} }{\LARGE\par}
\par\end{center}

\begin{center}
{\large{}George Haller}\footnote{{\large{}Corresponding author. Email: georgehaller@ethz.ch}}{\large{}
and Bálint Kaszás}\\
{\large{} Institute for Mechanical Systems}\\
{\large{} ETH Zürich}\\
{\large{} Leonhardstrasse 76, 8092 Zürich, Switzerland}{\large\par}
\par\end{center}

\appendix

\section{DMD and the Koopman operator\label{sec:DMD-and-Koopman}}

\subsection{Observable dynamics and the Koopman operator\label{subsec:Koopman}}

A classic approach, introduced first by \citet{koopman31} and revived
recently by multiple authors (see, e.g., by \citet{budisic12,mezic13,kutz16}
and the references cited therein), describes observable evolution
via the Koopman operator $\boldsymbol{\mathcal{K}}^{t}\colon\mathcal{G}^{d}\to\mathcal{G}^{d}$,
defined on a Banach space $\mathcal{G}^{d}$ of $d$-dimensional observable
functions as the pull-back operation on observables under the flow
map $\mathbf{F}^{t}$ of system \eqref{eq:nonlinear system0}. Specifically,
\begin{equation}
\boldsymbol{\mathcal{K}}^{t}\left[\boldsymbol{\phi}\right]\,(\mathbf{x}_{0}):=\boldsymbol{\phi}\left(\mathbf{F}^{t}(\mathbf{x}_{0})\right),\label{eq:Koopman_definition}
\end{equation}
or, in more compact notation, 
\begin{equation}
\boldsymbol{\mathcal{K}}^{t}\left[\boldsymbol{\phi}\right]=\boldsymbol{\phi}\circ\mathbf{F}^{t},\quad\boldsymbol{\phi}\in\mathcal{G}^{d}.\label{eq:conjugacy on an observable}
\end{equation}

As $\mathcal{G}^{d}$ is a complete vector space, linear combinations
of $\ell$ observables $\phi_{1},\ldots,\phi_{\ell}\in\mathcal{G}^{d}$
are also observables in $\mathcal{G}^{d}$ and hence $\boldsymbol{\mathcal{K}}^{t}$
can be evaluated on them. Such an evaluation gives 
\begin{align}
\boldsymbol{\mathcal{K}}^{t}\left(c_{1}\phi_{1}+\ldots+c_{1}\phi_{\ell}\right)(\mathbf{x}_{0}) & =c_{1}\boldsymbol{\mathcal{K}}^{t}\left[\phi_{1}\right](\mathbf{x}_{0})+\ldots+c_{\ell}\boldsymbol{\mathcal{K}}^{t}\left[\phi_{\ell}\right](\mathbf{x}_{0})\label{eq:Koopman is linear}
\end{align}

\emph{This last equation shows that the mapping $\boldsymbol{\mathcal{K}}^{t}$
is linear with respect to the choice of observables while the system's
initial condition $\mathbf{x}_{0}$ is kept fixed. In contrast, DMD
seeks to find an approximate mapping that is linear with respect to
the choice of the system's initial conditions while the observable
is kept fixed.} Indeed, in applications of DMD, eq. (\ref{eq:approximation by DMD})
is posed for a fixed observable vector evaluated on various initial
conditions. EDMD is also trained on system trajectories launched from
different initial conditions, observed under the same general function
of an initially fixed set of observables. 

On a broader note, the linearity of the Koopman operator with respect
to changes in the choice of an observable, as seen in eq. (\ref{eq:Koopman is linear}),
does not imply linear dynamics for observations of the system under
a fixed observable. For instance, let $\phi_{1}(\mathbf{x})$ denote
the observation of the displacement, measured in meters, of the end
point of a nonlinear beam whose current state in its phase space is
denoted by $\mathbf{x}$. If, instead, we want to observe the position
of the endpoint in millimeters, then we switch the observable $\phi_{2}(\mathbf{x})=10^{3}\phi_{1}(\mathbf{x}),$
and hence for the observed initial state $\mathbf{x}_{0}$ of the
beam, we have the change \linebreak{}
\[
\phi_{2}(\mathbf{x}_{0})=c\phi_{1}(\mathbf{x}_{0}),\qquad c=10^{3},
\]
in the initial observation. It is then no surprise that this change
in the initial observable will transform to the same change in the
current observations of the endpoint at time $t$, i.e., 
\[
\boldsymbol{\mathcal{K}}^{t}\left[\phi_{2}\right](\mathbf{x}_{0})=c\boldsymbol{\mathcal{K}}^{t}\left[\phi_{1}\right](\mathbf{x}_{0}).
\]
This simple fact, however, just reflects that the final results at
time $t$ change by the same multiple as their initial conditions
if we decide to change the physical units in our measurements. Clearly,
this fact does not imply any linear dynamics for the displacement
of the endpoint of the beam. Indeed, $\boldsymbol{\mathcal{K}}^{t}\left[\phi_{2}\right](\mathbf{x}_{0})$
may well be a chaotic signal.

Another example would be tracking the total population of a continent
and denoting the current state of the population by $\mathbf{x}$.
Let $\phi_{1}(\mathbf{x})$ denote specifically the observable returning
the population of country $A$ and $\phi_{2}(\mathbf{x})$ denote
the observable returning the population of country $B$ within the
same continent. If we decide now to jointly observe the population
of these two countries, then we are in effect passing to a third observable
at at initial state $\mathbf{x}_{0}$ to obtain 
\[
\phi_{3}(\mathbf{x}_{0})=\phi_{2}(b_{0})+\phi_{1}(\mathbf{x}_{0}),
\]
for the initial observations. To obtain the total population of countries
$A$ and $B$ at the current state $\mathbf{x}$ of the total population,
we can simply write 
\[
\boldsymbol{\mathcal{K}}^{t}\left[\phi_{3}\right](\mathbf{x}_{0})=\boldsymbol{\mathcal{K}}^{t}\left[\phi_{1}\right](\mathbf{x}_{0})+\boldsymbol{\mathcal{K}}^{t}\left[\phi_{2}\right](\mathbf{x}_{0}).
\]
Again, this fact by itself does not say anything about the linearity
or nonlinearity of the population growth in either country $A$ or
country $B$. 

Finally, \emph{the Koopman operator is as much nonlinear as it is
linear with respect to the changes in the observables} as it commutes
with any nonlinear single or multi-variable function of observables,
as longs as that function is also an observable. Indeed, for any function
\[
\mathbf{g}\colon\left(\mathcal{G}^{d}\right)^{\ell}\to\mathcal{G}^{d},
\]
the observable $\boldsymbol{\psi}(\mathbf{x}_{0})=\mathbf{g}\left(\phi_{1}(\mathbf{x}_{0}),\ldots,\phi_{\ell}(\mathbf{x}_{0})\right)$
satisfies
\[
\boldsymbol{\mathcal{K}}^{t}\left[\mathbf{g}\left(\phi_{1},\ldots,\phi_{\ell}\right)\right]=\mathbf{g}\left(\boldsymbol{\mathcal{K}}^{t}\left[\phi_{1}\right],\ldots,\boldsymbol{\mathcal{K}}^{t}\left[\phi_{\ell}\right]\right).
\]

To illustrate this general property of the Koopman operator, let $\mathbf{x}$
denote the instantaneous velocity field of a 2D Navier-Stokes flow
with density $\rho$ and let $\phi_{ij}(\mathbf{x})=\left|v_{ij}\right|$
be the specific velocity magnitude along the $(i,j)$ location of
a fixed spatial grid. If we now want to track the local kinetic energy
of the flow at $(i,j)$, we do not have to reformulate and re-solve
the Navier--Stokes equation for $\phi_{\mathrm{kin}}(\mathbf{x})=\frac{1}{2}\rho\phi_{ij}^{2}(\mathbf{x})=\left|x_{ij}\right|^{2}$.
Rather, we can use our already available observations of $\phi_{ij}(\mathbf{x})$
of the velocity field starting from an initial velocity field $\mathbf{x}_{0}$
to obtain $\boldsymbol{\mathcal{K}}^{t}\left[\phi_{\mathrm{kin}}\right](\mathbf{x}_{0})=\frac{1}{2}\rho\left[\boldsymbol{\mathcal{K}}^{t}\left[\phi_{ij}\right](\mathbf{x}_{0})\right]^{2}.$
This, however, does not imply a statement that ``the dynamics of
the 2D Navier--Stokes equation are quadratic in the space of observables''.

\subsection{Differential equation for the dynamics of observables under the Koopman
operator}

We can directly verify that
\[
\boldsymbol{\mathcal{K}}^{t_{1}+t_{2}}\boldsymbol{\phi}=\mathcal{\boldsymbol{\mathcal{K}}}^{t_{2}}\boldsymbol{\mathcal{K}}^{t_{1}}\boldsymbol{\phi}=\boldsymbol{\mathcal{K}}^{t_{1}}\mathcal{\boldsymbol{\mathcal{K}}}^{t_{2}}\boldsymbol{\phi},\quad\left(\boldsymbol{\mathcal{K}}^{t}\right)^{-1}\boldsymbol{\phi}=\boldsymbol{\mathcal{K}}^{-t}\boldsymbol{\phi},
\]
i.e., $\boldsymbol{\mathcal{K}}^{t}$ defines a flow map on the observable
space $\mathcal{G}^{d}$, as long as system \eqref{eq:nonlinear system0}
generates a flow $\mathbf{F}^{t}$ on the phase space $\mathcal{P}$.
To describe the dynamics generated by this flow map, let us select
an arbitrary scalar observable $\boldsymbol{\phi}\in\mathcal{G}^{d}$
and differentiate the defining relation (\ref{eq:Koopman_definition})
with respect to $t$ to obtain\emph{
\begin{equation}
\frac{d}{dt}\left\{ \text{\ensuremath{\mathcal{\boldsymbol{\mathcal{K}}}^{t}\left[\boldsymbol{\phi}\right]}}(\boldsymbol{x}_{0})\right\} =D\boldsymbol{\phi}\left(\mathbf{F}^{t}(x_{0})\right)\mathbf{f}\left(\mathbf{F}^{t}(x_{0})\right).\label{eq:first DE for Koopman}
\end{equation}
}Since we have
\[
D_{\mathbf{x}_{0}}\boldsymbol{\phi}\left(\mathbf{F}^{t}(\mathbf{x}_{0})\right)=D\boldsymbol{\phi}\left(\mathbf{F}^{t}(\mathbf{x}_{0})\right)D\mathbf{F}^{t}(\mathbf{x}_{0}),
\]
and $\mathbf{F}^{t}$ is a diffeomorphism, we can rewrite (\ref{eq:first DE for Koopman})
as
\begin{equation}
\frac{d}{dt}\left\{ \text{\ensuremath{\mathcal{\boldsymbol{\mathcal{K}}}^{t}\left[\boldsymbol{\phi}\right]}}(\mathbf{x}_{0})\right\} =D_{\mathbf{x}_{0}}\boldsymbol{\phi}\left(\mathbf{F}^{t}(\mathbf{x}_{0})\right)\left[D\mathbf{F}^{t}(\mathbf{x}_{0})\right]^{-1}\mathbf{f}\left(\mathbf{F}^{t}(\mathbf{x}_{0})\right)=D_{\mathbf{x}_{0}}\left\{ \mathcal{\boldsymbol{\mathcal{K}}}^{t}\left[\boldsymbol{\phi}\right](\mathbf{x}_{0})\right\} \left[D\mathbf{F}^{t}(\mathbf{x}_{0})\right]^{-1}\mathbf{f}\left(\mathbf{F}^{t}(\mathbf{x}_{0})\right).\label{eq:k1}
\end{equation}
Note, however, that
\begin{equation}
\left[D\mathbf{F}^{t}(\mathbf{x}_{0})\right]^{-1}\mathbf{f}\left(\mathbf{F}^{t}(\mathbf{x}_{0})\right)=\mathbf{f}(\mathbf{x}_{0}),\label{eq:k2}
\end{equation}
where we used the fact that $\mathbf{f}\left(\mathbf{F}^{t}(\mathbf{x}_{0})\right)$
is a solution of the equation of variations $\dot{\boldsymbol{\xi}}=D\mathbf{F}^{t}(\mathbf{x}_{0})\boldsymbol{\xi}$
and hence we have $\mathbf{f}\left(\mathbf{F}^{t}(\mathbf{x}_{0})\right)=D\mathbf{F}^{t}(\mathbf{x}_{0})\mathbf{f}\left(\mathbf{x}_{0}\right)$.
Therefore, we obtain from eqs. (\ref{eq:k1}) and (\ref{eq:k2}) that
\begin{equation}
\frac{d}{dt}\left(\mathcal{\boldsymbol{\mathcal{K}}}^{t}\boldsymbol{\phi}\right)=\boldsymbol{\mathcal{L}}\mathcal{\boldsymbol{\mathcal{K}}}^{t}\boldsymbol{\phi}\label{eq:linear DE for Koopman operator}
\end{equation}
 with the Liouville operator $\boldsymbol{\mathcal{L}}:\mathcal{G}^{d}\mapsto\mathcal{G}^{d}$
defined as
\[
\boldsymbol{\mathcal{L}}\colon\boldsymbol{\phi}\mapsto\mathcal{\boldsymbol{L}}\boldsymbol{\phi}:=D\boldsymbol{\phi}\,\mathbf{f}.
\]

\subsection{An example: The phase space variable as observable\label{subsec:The phase space variable as observable}}

As an illustration of the solution structure of the functional differential
equation (\ref{eq:linear DE for Koopman operator}), consider the
simplest nontrivial case wherein the observable function $\boldsymbol{\phi}$
is just the identity map on $\mathcal{P}=\mathbb{R}^{d}$, i.e., $\hat{\boldsymbol{\phi}}=\mathbf{I}\colon\mathbb{R}^{d}\to\mathbb{R}^{d}$.
In that case, we specifically have the expressions
\[
\hat{\boldsymbol{\phi}}(\mathbf{x}_{0})=\mathbf{x}_{0},\qquad\mathcal{\boldsymbol{\mathcal{K}}}^{t}\hat{\boldsymbol{\phi}}(\mathbf{x}_{0})=\mathbf{F}^{t}(\mathbf{x}_{0}),\qquad\frac{d}{dt}\left\{ \text{\ensuremath{\mathcal{\boldsymbol{\mathcal{K}}}^{t}\hat{\boldsymbol{\phi}}}}\right\} =\dot{\mathbf{F}}^{t}(\mathbf{x}_{0})=\dot{\mathbf{x}},\qquad D_{\mathbf{x}_{0}}\left\{ \mathcal{\boldsymbol{\mathcal{K}}}^{t}\hat{\boldsymbol{\phi}}(\mathbf{x}_{0})\right\} =D\mathbf{F}^{t}(\mathbf{x}_{0}).
\]
Substituting these formulas into the functional differential equation
(\ref{eq:linear DE for Koopman operator}), we obtain 
\begin{equation}
\dot{\mathbf{x}}=D\mathbf{F}^{t}(\mathbf{x}_{0})\mathbf{f}(\mathbf{x}_{0})=\mathbf{f}(\mathbf{x}).\label{eq:simplifies back to noninear system}
\end{equation}
Therefore, for the most commonly used observable in classic dynamical
systems, the phase space variable $\mathbf{x}$, the linear functional
differential equation (\ref{eq:linear DE for Koopman operator}) simplifies
to the original nonlinear ODE (\eqref{eq:nonlinear system0}). 

This again underscores that the linearity of the Koopman operator
does not imply linear dynamics for individual observations $\mathbf{x}(t;\mathbf{x}_{0})$
of the evolution of the initial condition $\mathbf{x}_{0}$. Rather,
it implies that for any real constant $c$, \emph{the scaled initial
observation $\tilde{\boldsymbol{\phi}}\,(\mathbf{x}_{0})=c\hat{\boldsymbol{\phi}}(\mathbf{x}_{0})$
of $\mathbf{x}_{0}$ will evolve into the identically scaled current
observation }$\mathcal{\boldsymbol{\mathcal{K}}}^{t}\hat{\boldsymbol{\phi}}(\mathbf{x}_{0})=c\mathcal{\boldsymbol{\mathcal{K}}}^{t}\hat{\boldsymbol{\phi}}(\mathbf{x}_{0})=c\mathbf{x}(t;\mathbf{x}_{0})$
\emph{of the trajectory at time $t$}, i.e., $\mathcal{\boldsymbol{\mathcal{K}}}^{t}$
will return $\mathbf{x}(t;\mathbf{x}_{0})$ scaled by the same constant
$c$. Most importantly, we will still generally have
\[
\mathbf{x}(t;c\mathbf{x}_{0})\neq c\mathbf{x}(t;\mathbf{x}_{0}),
\]
unless the dynamical system (\ref{eq:simplifies back to noninear system})
is linear. Therefore, the dynamics of the fixed observable $\mathbf{x}(t;\mathbf{x}_{0})$
is not linear in the usual sense, i.e., with respect to changes in
initial conditions. Rather, it is linear with respect to changes in
the observable in which the evolution of the same trajectory is observed.

Equation (\ref{eq:simplifies back to noninear system}) also illustrates
that passing from the autonomous nonlinear ODE \eqref{eq:nonlinear system0}
to the functional differential equation (\ref{eq:linear DE for Koopman operator})
gives as infinite-dimensional linear formulation that is at least
as complicated to solve as the original finite-dimensional nonlinear
ODE formulation.

\subsection{Koopman eigenfunctions\label{subsec:Koopman-eigenfunctions}}

There are, nevertheless, non-generic sets of observables restricted
to which e.q. (\ref{eq:linear DE for Koopman operator}) becomes a
constant-coefficient linear system of ODEs. Examples are observables
falling in the span of eigenfunctions of the Koopman operator $\mathcal{\boldsymbol{\mathcal{K}}}^{t}$,
if and where such eigenfunctions exist. 

Indeed, if for some constant $\lambda\in\mathbb{C}$, the eigenvalue
problem
\begin{equation}
\mathcal{\boldsymbol{\mathcal{K}}}^{t}\left[\boldsymbol{\phi}\right](\mathbf{x}_{0})=e^{\lambda t}\boldsymbol{\phi}(\mathbf{x}_{0}),\quad\mathbf{x}_{0}\in\mathcal{D},\label{eq:Koopman eigenproblem}
\end{equation}
has a solution $\boldsymbol{\phi}\colon\mathcal{D}\to\mathbb{C}^{d}$
over some domain $\mathcal{D}\subset\mathcal{P}$ of the phase space,
then we obtain
\[
\frac{d}{dt}\text{\ensuremath{\mathcal{\boldsymbol{\mathcal{K}}}^{t}\left[\boldsymbol{\phi}\right]}}=\lambda\mathcal{\boldsymbol{\mathcal{K}}}^{t}\left[\boldsymbol{\phi}\right].
\]

Consequently, if $\boldsymbol{\phi}_{1}(\boldsymbol{x}_{0}),\ldots,\boldsymbol{\phi}_{k}(\boldsymbol{x}_{0})\in\mathbb{C}^{d}$
are Koopman eigenfunctions with corresponding Koopman eigenvalues
$\lambda_{1},\ldots,\lambda_{k}\in\mathbb{C}$ and domains of definition
$\mathcal{D}_{1},\ldots,\mathcal{D}_{k}\subset\mathcal{P}$, then
any observable $\boldsymbol{w}\in\mathcal{\Phi}^{d}\left(\cap_{j=1}^{k}\mathcal{D}_{j}\right)$
of the form
\begin{equation}
\boldsymbol{w}(\mathbf{x}_{0})=c_{1}\boldsymbol{\phi}_{1}(\mathbf{x}_{0})+\ldots+c_{k}\boldsymbol{\phi}_{k}(\mathbf{x}_{0})\label{eq:observer in Koopman eigenspace}
\end{equation}
 satisfies
\[
\text{\ensuremath{\mathcal{\boldsymbol{\mathcal{K}}}^{t}\left[\boldsymbol{w}\right]}}=\sum_{j=1}^{k}c_{j}\mathcal{\boldsymbol{\mathcal{K}}}^{t}\left[\boldsymbol{\phi}^{j}\right]=\sum_{j=1}^{k}c_{j}e^{\lambda_{j}t}\boldsymbol{\phi}_{j}.
\]
 Therefore, the coordinate representation $\boldsymbol{W}(t)$ of
$\mathcal{\boldsymbol{\mathcal{K}}}^{t}\left[\boldsymbol{w}\right]$
in the basis $\boldsymbol{\phi}_{1},\ldots,\boldsymbol{\phi}_{j},$
given by
\[
\mathbf{W}(t)=\left(\begin{array}{c}
c_{1}e^{\lambda_{1}t}\\
\vdots\\
c_{k}e^{\lambda_{k}t}
\end{array}\right),
\]
satisfies the $k$-dimensional autonomous linear ODE
\[
\frac{d}{dt}\mathbf{W}=\boldsymbol{\Lambda}\mathbf{W},\quad\boldsymbol{\Lambda}=\left(\begin{array}{ccc}
\lambda_{1} & 0 & 0\\
0 & \ddots & 0\\
0 & 0 & \lambda_{k}
\end{array}\right).
\]
 More generally, if $\boldsymbol{\phi}_{1},\ldots,\boldsymbol{\phi}_{k}$
 are just linearly independent observables in the spectral subspace
$\mathrm{span}\left\{ \boldsymbol{\phi}_{1},\ldots,\boldsymbol{\phi}_{k}\right\} $,
then a similar ODE holds for $W$ in the basis $\boldsymbol{\phi}_{1},\ldots,\boldsymbol{\phi}_{k}.$
In that case, $\Lambda$ is not a diagonal matrix but its eigenvalues
are still $\lambda_{1},\ldots,\lambda_{k}$. 

It is often forgotten, however, that Koopman eigenfunctions satisfying
the eigenvalue problem (\ref{eq:Koopman eigenproblem}) generally
only exist on a subset $\mathcal{D}$ of the phase space. This is
made precise by the following simple observation:
\begin{prop}
\label{prop:Koopman blow-up} At least one principal Koopman eigenfunction
blows up (i.e., becomes unbounded) at the boundary of a domain of
attraction or repulsion of a fixed point around which the underlying
dynamical system admits a local $C^{1}$linearization.
\end{prop}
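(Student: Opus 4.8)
The plan is to construct the principal Koopman eigenfunctions explicitly on the entire basin and then show that at least one of them is amplified without bound along every sequence approaching the basin boundary. I would treat an attracting fixed point $\mathbf{x}=\mathbf{0}$ with open basin $\mathcal{B}$; the repelling case should follow verbatim after reversing time, which sends each eigenvalue $\lambda_j$ to $-\lambda_j$ and turns the domain of repulsion into an attracting basin. First I would invoke the $C^1$ linearization theorem of \citet{hartman60} to obtain a neighborhood $\mathcal{N}$ of the origin and a homeomorphism $\mathbf{y}=\mathbf{h}(\mathbf{x})$ with $\mathbf{h}(\mathbf{0})=\mathbf{0}$ and $\mathbf{h}(\mathbf{F}^t(\mathbf{x}))=e^{\mathbf{A}t}\mathbf{h}(\mathbf{x})$ on $\mathcal{N}$. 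Assuming $\mathbf{A}$ semisimple (the generic case), I would take left eigenvectors $\mathbf{u}_j^\ast\mathbf{A}=\lambda_j\mathbf{u}_j^\ast$ and set $\phi_j(\mathbf{x})=\mathbf{u}_j^\ast\mathbf{h}(\mathbf{x})$, each of which solves the eigenvalue problem \eqref{eq:Koopman eigenproblem} on $\mathcal{N}$ with eigenvalue $\lambda_j$; these are the principal eigenfunctions. I would then extend each $\phi_j$ to all of $\mathcal{B}$ through the eigenrelation: for $\mathbf{x}_0\in\mathcal{B}$ pick any $T$ with $\mathbf{F}^T(\mathbf{x}_0)\in\mathcal{N}$ and put $\phi_j(\mathbf{x}_0):=e^{-\lambda_j T}\phi_j(\mathbf{F}^T(\mathbf{x}_0))$, which the eigenrelation renders independent of $T$.

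Next I would establish an entry-time divergence. Fixing a closed ball $\overline{B_\delta}\subset\mathcal{N}$, let $T(\mathbf{x}_0)$ be the first time the trajectory from $\mathbf{x}_0$ meets the sphere $\partial B_\delta$, and take $\mathbf{x}_0^{(k)}\to\mathbf{x}_\ast\in\partial\mathcal{B}$. I would argue $T_k:=T(\mathbf{x}_0^{(k)})\to\infty$ by contradiction: if some subsequence had $T_k\to T_\ast<\infty$, then continuity of the flow would give $\mathbf{F}^{T_\ast}(\mathbf{x}_\ast)\in\overline{B_\delta}\subset\mathcal{B}$, and since $\mathcal{B}$ is open and flow-invariant this would force $\mathbf{x}_\ast\in\mathcal{B}$, contradicting $\mathbf{x}_\ast\in\partial\mathcal{B}$.

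I would then combine this with exponential amplification. Writing $\mathbf{z}_k:=\mathbf{F}^{T_k}(\mathbf{x}_0^{(k)})\in\partial B_\delta$, the fact that $\mathbf{h}$ is a homeomorphism fixing the origin makes $\mathbf{h}(\partial B_\delta)$ compact and bounded away from $\mathbf{0}$, so $|\mathbf{h}(\mathbf{z}_k)|\ge m>0$; since the $\mathbf{u}_j^\ast$ span the dual space, $\sum_j|\phi_j(\mathbf{z}_k)|\ge c\,m>0$. With $\mu:=\min_i|\mathrm{Re}\,\lambda_i|>0$ (all real parts being negative for an attractor), the extension formula yields $\max_j|\phi_j(\mathbf{x}_0^{(k)})|=\max_j e^{-\mathrm{Re}\,\lambda_j T_k}|\phi_j(\mathbf{z}_k)|\ge \tfrac{c m}{n}\,e^{\mu T_k}\to\infty$. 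Were every $\phi_j$ bounded on $\mathcal{B}$, this maximum would stay bounded, a contradiction; hence at least one principal eigenfunction is unbounded on $\mathcal{B}$ and blows up at $\partial\mathcal{B}$, as claimed.

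The hard part will be the semisimplicity assumption used in the amplification step: when $\mathbf{A}$ carries nontrivial Jordan blocks, the genuine left eigenvectors no longer span the dual, so $\sum_j|\phi_j(\mathbf{z}_k)|$ need not be bounded below by $|\mathbf{h}(\mathbf{z}_k)|$, and an entry direction $\mathbf{z}_k$ could in principle stay orthogonal to every $\mathbf{u}_j$. I expect to patch this by enlarging the family to include the generalized principal eigenfunctions, namely the polynomial-times-exponential solutions of \eqref{eq:linear DE for Koopman operator} attached to each Jordan chain, whose joint magnitude does control $|\mathbf{h}(\mathbf{z}_k)|$ while still being amplified by the same factor $e^{\mu T_k}$. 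The only other point needing care is verifying that the flow-based extension is genuinely single-valued and as regular as claimed on all of $\mathcal{B}$, but this is routine once the eigenrelation is in hand.
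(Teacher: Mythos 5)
Your proposal is correct and follows essentially the same route as the paper: where the paper invokes the global $C^{1}$ linearization of \citet{lan13} on the basin and reads off blow-up of $\boldsymbol{\phi}_{j}(\mathbf{x})=\left\langle \mathbf{h}^{-1}(\mathbf{x}),\mathbf{v}_{j}\right\rangle$ from the diverging times of flight to a small ball, you rebuild that globalization by hand, extending the local Hartman eigenfunctions along the flow via the eigenrelation and making the amplification $e^{\mu T_{k}}$ and the entry-time divergence explicit. Your semisimplicity caveat is no weaker than the paper's own implicit assumption, since formula (\ref{eq:nonlinear Koopman eigenfunctions}) likewise presumes the left eigenvectors $\mathbf{v}_{j}$ form a basis.
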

\begin{proof}
We only prove the statement for domains of attraction of fixed points
of continuous dynamical systems. Domains of repulsion can be handled
in the same fashion in backward time and the proof for discrete dynamical
systems is similar. Let $\mathbf{x}_{0}$ be a point on the boundary
$\partial\mathcal{B}$ of a domain of attraction $\mathcal{B}$ of
a fixed point $p$ of a continuous dynamical system. Then, arbitrarily
close to $\mathbf{x}_{0}$, there are initial conditions $\hat{\mathbf{x}}_{0}$
in the domain of attraction with arbitrarily long times of flight
to the ball $B_{\rho}(p)$ around $p$ in which the dynamical system
can be $C^{1}$linearized.

\citet{lan13} extended local $C^{1}$ linearization near an attracting
fixed point via the backward-time flow map to a global $C^{1}$ linearizing
transformation $\mathbf{x}=\mathbf{h}(\mathbf{y})$ within $\mathcal{B}$
that maps the original ODE within $\mathcal{B}$ to $\dot{\mathbf{y}}=\mathbf{A}\mathbf{y}$
defined on $\mathbb{R}^{n}$. By the construction of this linearizing
transformation, initial conditions $\hat{\mathbf{x}}_{0}$ whose times
of flight to $B_{\rho}(p)$ is large will be mapped into initial conditions
$\hat{\mathbf{y}}_{0}$ of the linearized system that are far from
the origin $\mathbf{y}=\mathbf{0}$. 

The unique principal Koopman eigenfunctions of the nonlinear system
are known to be 
\begin{equation}
\boldsymbol{\phi}_{j}(\mathbf{x})=\left\langle \mathbf{h}^{-1}(\mathbf{x}),\mathbf{v}_{j}\right\rangle ,\quad j=1,\ldots,n,\label{eq:nonlinear Koopman eigenfunctions}
\end{equation}
with $\mathbf{v}_{j}$ denoting the left eigenvectors or $\mathbf{A}$.
As we have just concluded that $\mathbf{y}=\mathbf{h}^{-1}(\mathbf{x})$
will admit arbitrarily large values arbitrarily close to any point
$\mathbf{x}_{0}\in\mathcal{\partial\mathcal{B}}$, it follows that
there will be at least one $j$ for which the principal Koopman eigenfunction
$\boldsymbol{\phi}_{j}(\mathbf{x})$ blows up at $\mathbf{x}_{0}\in\partial\mathcal{B}$. 
\end{proof}
We note that the statement of Proposition \ref{prop:Koopman blow-up}
can also be deduced from the more general results in Theorem 3 of
\citet{kvalheim23}.

As a consequence of Proposition \ref{prop:Koopman blow-up}, one cannot
simply patch together Koopman eigenfunctions outside boundaries of
domains of attraction and thereby provide a viable Koopman-linearization
for the whole phase space, as is often suggested. Indeed, even if
found somehow, Koopman eigenfunctions would become unmanageable for
the purposes of observer expansions even before reaching basin boundaries,
as shown explicitly by the examples of \citet{page19}.

\section{Proof of Theorem \ref{thm: DMD} and technical remarks \label{sec: Proof of Theorem 1}}

Under assumption (A1) of hyperbolicity of the theorem and under assumption
(A2) on the smoothness class $C^{2}$ of the dynamical system, a refinement
of the Hartman--Grobman theorem (see \citet{guckenheimer83}) by
\citet{hartman60} guarantees the local existence of a near-identity,
linearizing change of coordinates of the form\footnote{\citet{hartman60} only states that $h\in C^{1}$ in his main theorem.
He adds, however, that his proof of the theorem also shows that $Dh$
is uniformly Hölder continuous in $U$ with Hölder exponent $\beta\in\left(0,1\right)$,
which implies eq. (\ref{eq:Hartman linearization}). } 
\begin{equation}
\mathbf{x}=\mathbf{y}+\mathbf{h}(\mathbf{y}),\qquad\mathbf{y}\in U\subset\mathbb{R}^{n},\qquad\mathbf{h}(\mathbf{y}),\mathbf{h}^{-1}(\mathbf{y})=\mathcal{O}\left(\left|\mathbf{y}\right|^{1+\beta}\right),\label{eq:Hartman linearization}
\end{equation}
for some $\beta\in\left(0,1\right)$, under which system \eqref{eq:more specific nonlinear system}
takes the exact linear form 
\begin{equation}
\dot{\mathbf{y}}=\mathbf{A}\mathbf{y}.\label{eq:linearized system}
\end{equation}

As pointed out by \citet{haller23} in the context of the more restrictive,
$C^{\infty}$ linearization theorem of \citet{sternberg58}, the inverse
of the linearizing transformation (\ref{eq:Hartman linearization})
maps the $d$-dimensional spectral subspace $E$ of the linearized
system (\ref{eq:linearized system}) into a $d$-dimensional, normally
attracting spectral submanifold (SSM), 
\begin{equation}
\mathcal{W}(E)=\left(\mathbf{I}+\mathbf{h}\right)^{-1}(E),\label{eq:W(E) construction}
\end{equation}
for system \eqref{eq:more specific nonlinear system}. The internal
dynamics of this SSM govern the longer-term behavior of all trajectories
near the origin. Generally, there will be infinitely many such invariant
manifolds, but they are all tangent to $E$ at the origin. We note
that, $\mathcal{W}(E)$, as defined in eq. (\ref{eq:W(E) construction}),
is only known to be $C^{1}$ under our current set of assumptions.
Under further nonresonance assumption on the spectrum of $\mathbf{A},$
the SSM can be shown to be as smooth as the underlying dynamical system
(see \citet{cabre03,haller16}).

Based on these results, we apply the linearizing transformation (\ref{eq:Hartman linearization})
followed by a linear change of coordinates to $\left(\boldsymbol{\xi},\boldsymbol{\eta}\right)$,
where $\boldsymbol{\xi}$ are coordinates along the spectral subspace
$E$ and $\boldsymbol{\eta}$ are coordinates along the spectral subspace
$F.$ Specifically, let 
\begin{align}
\mathbf{x} & =\mathbf{T}\mathbf{y}+\mathbf{h}(\mathbf{T}\mathbf{y}),\quad\mathbf{y}=\left(\boldsymbol{\xi},\boldsymbol{\eta}\right)^{\mathrm{T}}\in\mathbb{R}^{d}\times\mathbb{R}^{n-d},\label{eq:main transformation}
\end{align}
with the matrix $\mathbf{T}$ defined in formula (\ref{eq:Tdef}). 

Under the transformation (\ref{eq:main transformation}), the nonlinear
system \eqref{eq:more specific nonlinear system}becomes
\begin{equation}
\left(\begin{array}{c}
\dot{\boldsymbol{\xi}}\\
\dot{\boldsymbol{\eta}}
\end{array}\right)=\left(\begin{array}{cc}
\boldsymbol{\Lambda}_{E} & 0\\
0 & \boldsymbol{\Lambda}_{F}
\end{array}\right)\left(\begin{array}{c}
\boldsymbol{\xi}\\
\boldsymbol{\eta}
\end{array}\right),\qquad\boldsymbol{\Lambda}_{E}=\left(\mathbf{T}^{-1}\mathbf{A}\mathbf{T}\right)\vert_{E},\qquad\boldsymbol{\Lambda}_{F}=\left(\mathbf{T}^{-1}\mathbf{A}\mathbf{T}\right)\vert_{F}.\label{eq:decoupled linear system}
\end{equation}
In these coordinates, the smoothest invariant manifold tangent to
$E$ is 
\[
\mathcal{W}(E)=\left\{ \left(\boldsymbol{\xi},\boldsymbol{\eta}\right)\,:\,\boldsymbol{\eta}=0\right\} ,
\]
within which the dynamics restricted to $\mathcal{W}(E)$ are then
given by 
\begin{equation}
\dot{\boldsymbol{\xi}}=\boldsymbol{\Lambda}_{E}\boldsymbol{\xi}.\label{eq:xi_ODE}
\end{equation}
From eq. (\ref{eq:decoupled linear system}), we obtain the estimates
\begin{align*}
\left|\boldsymbol{\xi}\left(t;\boldsymbol{\xi}_{0}\right)\right| & \leq\left|\boldsymbol{\xi}_{0}\right|e^{\left(\mathrm{Re}\text{\ensuremath{\lambda_{1}+\epsilon_{1}}}\right)t},\\
\left|\boldsymbol{\eta}\left(t;\boldsymbol{\eta}_{0}\right)\right| & \leq\left|\boldsymbol{\eta}_{0}\right|e^{\left(\mathrm{Re}\text{\ensuremath{\lambda_{d+1}+\epsilon_{d+1}}}\right)t},
\end{align*}
where $\epsilon_{j}\geq0$ is an arbitrarily small constant that can
be chosen zero if the algebraic multiplicity of $\lambda_{j}$ is
equal to its geometric multiplicity. 

With the help of the $\left(\boldsymbol{\xi},\boldsymbol{\eta}\right)$
coordinates and eq. (\ref{eq:Hartman linearization}), the class $C^{2}$
observable $\boldsymbol{\phi}$ be locally be written as 
\begin{align}
\boldsymbol{\phi}(x) & =\boldsymbol{\phi}\left(\mathbf{T}\mathbf{y}+\mathbf{h}(\mathbf{T}\mathbf{y})\right)=D\boldsymbol{\phi}(\mathbf{0})\left[\mathbf{T}\mathbf{y}+\mathbf{h}(\mathbf{Ty})\right]+\mathcal{O}\left(\left|\mathbf{y}+\mathbf{h}(\mathbf{y})\right|^{2}\right)\nonumber \\
 & =D\boldsymbol{\phi}(\mathbf{0})\mathbf{T}\left(\begin{array}{c}
\boldsymbol{\xi}\\
\boldsymbol{\eta}
\end{array}\right)+D\boldsymbol{\phi}(\mathbf{0})\mathbf{h}(\mathbf{T}\mathbf{y})+\mathcal{O}\left(\left|\boldsymbol{\xi}\right|^{2},\left|\boldsymbol{\xi}\right|\left|\boldsymbol{\eta}\right|,\left|\boldsymbol{\eta}\right|^{2}\right)\nonumber \\
 & =D\boldsymbol{\phi}(\mathbf{0})\mathbf{T}_{E}\boldsymbol{\xi}+\mathcal{O}\left(\left|\boldsymbol{\eta}\right|\right)+\mathcal{O}\left(\left|\left(\boldsymbol{\xi},\boldsymbol{\eta}\right)\right|^{1+\beta}\right)+\mathcal{O}\left(\left|\boldsymbol{\xi}\right|^{2},\left|\boldsymbol{\xi}\right|\left|\boldsymbol{\eta}\right|,\left|\boldsymbol{\eta}\right|^{2}\right)\label{eq:g near origin}\\
 & =D\boldsymbol{\phi}(\mathbf{0})\mathbf{T}_{E}\boldsymbol{\xi}+\mathcal{O}\left(\left|\boldsymbol{\eta}\right|\right)+\mathcal{O}\left(\left|\left(\boldsymbol{\xi},\boldsymbol{\eta}\right)\right|^{1+\beta}\right)\nonumber 
\end{align}
for some $\beta\in\left(0,1\right)$.

We also express the data matrix $\boldsymbol{\Phi}$ defined in eq.
(\ref{eq:DMD data matrix definitions}) in terms of data matrices
with respect to the coordinates $\mathbf{y}=\left(\boldsymbol{\xi},\boldsymbol{\eta}\right)$
by letting

\begin{equation}
\boldsymbol{\Phi}=\boldsymbol{\phi}\left(\mathbf{TY}-\mathbf{h}(\mathbf{TY})\right),\qquad\mathbf{Y}=\left(\begin{array}{c}
\boldsymbol{\Xi}\\
\boldsymbol{H}
\end{array}\right),\label{eq:Xi and H}
\end{equation}
where the functions $\boldsymbol{\phi}$ and $\boldsymbol{h}$ are
applied to the matrices involved column by column. From eqs. (\ref{eq:g near origin})
and (\ref{eq:Xi and H}), we obtain that
\begin{align}
\Phi & =D\boldsymbol{\phi}(\mathbf{0})\mathbf{T}_{E}\boldsymbol{\Xi}+\mathcal{O}\left(\left|\boldsymbol{H}\right|\right)+\mathcal{O}\left(\left|\left(\boldsymbol{\Xi},\boldsymbol{H}\right)\right|^{1+\beta}\right),\nonumber \\
\hat{\Phi} & =D\boldsymbol{\phi}(\mathbf{0})\mathbf{T}_{E}e^{\boldsymbol{\Lambda}_{E}\Delta t}\boldsymbol{\Xi}+\mathcal{O}\left(\left|\boldsymbol{H}\right|\right)+\mathcal{O}\left(\left|\left(\boldsymbol{\Xi},\boldsymbol{H}\right)\right|^{1+\beta}\right),\label{eq:Phi and Phi hat}
\end{align}
where we have used the boundedness of the sampling time $\Delta t$
and the near-identity nature of the linearizing mapping (\ref{eq:main transformation}),
which implies
\begin{equation}
\mathcal{O}\left(\left|\boldsymbol{\Xi}\right|^{\beta}\right)=\mathcal{O}\left(\left|\mathbf{P}_{E}\mathbf{X}\right|^{\beta}\right),\qquad\mathcal{O}\left(\left|\boldsymbol{H}\right|^{\beta}\right)=\mathcal{O}\left(\left|\mathbf{P}_{F}\mathbf{X}\right|^{\beta}\right).\label{eq:orders are equal}
\end{equation}
 From the equations (\ref{eq:Phi and Phi hat}), we obtain
\begin{align}
\hat{\boldsymbol{\Phi}}\boldsymbol{\Phi}^{\mathrm{T}} & =\left[D\boldsymbol{\phi}(0)\mathbf{T}_{E}e^{\boldsymbol{\Lambda}_{E}\Delta t}\boldsymbol{\Xi}+\mathcal{O}\left(\left|\boldsymbol{H}\right|\right)+\mathcal{O}\left(\left|\left(\boldsymbol{\Xi},\boldsymbol{H}\right)\right|^{1+\beta}\right)\right]\left[D\boldsymbol{\phi}(0)\mathbf{T}_{E}\boldsymbol{\Xi}+\mathcal{O}\left(\left|\boldsymbol{H}\right|\right)+\mathcal{O}\left(\left|\left(\boldsymbol{\Xi},\boldsymbol{H}\right)\right|^{1+\beta}\right)\right]^{\mathrm{T}}\nonumber \\
 & =D\boldsymbol{\phi}(0)\mathbf{T}_{E}e^{\boldsymbol{\Lambda}_{E}\Delta t}\boldsymbol{\Xi}\boldsymbol{\Xi}^{\mathrm{T}}\mathbf{T}_{E}^{\mathrm{T}}\left[D\boldsymbol{\phi}(\mathbf{0})\right]^{\mathrm{T}}+\mathcal{O}\left(\left|\boldsymbol{H}\right|^{2},\left|\boldsymbol{H}\right|\left|\left(\boldsymbol{\Xi},\boldsymbol{H}\right)\right|^{1+\beta},\left|\left(\boldsymbol{\Xi},\boldsymbol{H}\right)\right|^{2+2\beta}\right),\nonumber \\
\left(\boldsymbol{\Phi}\boldsymbol{\Phi}^{\mathrm{T}}\right)^{\dagger} & =\left\{ \left[D\boldsymbol{\phi}(\mathbf{0})\mathbf{T}_{E}\boldsymbol{\Xi}+\mathcal{O}\left(\left|\boldsymbol{H}\right|\right)+\mathcal{O}\left(\left|\left(\boldsymbol{\Xi},\boldsymbol{H}\right)\right|^{1+\beta}\right)\right]\left[D\boldsymbol{\phi}(\mathbf{0})\mathbf{T}_{E}\boldsymbol{\Xi}+\mathcal{O}\left(\left|\boldsymbol{H}\right|\right)+\mathcal{O}\left(\left|\left(\boldsymbol{\Xi},\boldsymbol{H}\right)\right|^{1+\beta}\right)\right]^{\mathrm{T}}\right\} ^{\dagger}\nonumber \\
 & =\left[D\boldsymbol{\phi}(\mathbf{0})\mathbf{T}_{E}\boldsymbol{\Xi}\boldsymbol{\Xi}^{\mathrm{T}}\mathbf{T}_{E}^{\mathrm{T}}\left[D\boldsymbol{\phi}(\mathbf{0})\right]^{\mathrm{T}}+\mathcal{O}\left(\left|\boldsymbol{H}\right|^{2},\left|\boldsymbol{H}\right|\left|\left(\boldsymbol{\Xi},\boldsymbol{H}\right)\right|^{1+\beta},\left|\left(\boldsymbol{\Xi},\boldsymbol{H}\right)\right|^{2+2\beta}\right)\right]^{\dagger}.\label{eq:first step}
\end{align}

By the first assumption in (A4) of the theorem, $\boldsymbol{\Phi}\boldsymbol{\Phi}^{\mathrm{T}}$
is invertible, and hence $\left(\boldsymbol{\Phi}\boldsymbol{\Phi}^{\mathrm{T}}\right)^{\dagger}=\left(\boldsymbol{\Phi}\boldsymbol{\Phi}^{\mathrm{T}}\right)^{-1}$.
Therefore, by the second equation in (\ref{eq:first step}), for small
enough $\left|\boldsymbol{\Xi}\right|$ and $\left|\boldsymbol{H}\right|$,
the matrix $D\boldsymbol{\phi}(\mathbf{0})\mathbf{T}_{E}\boldsymbol{\Xi}\boldsymbol{\Xi}^{\mathrm{T}}\mathbf{T}_{E}^{\mathrm{T}}\left[D\boldsymbol{\phi}(\mathbf{0})\right]^{\mathrm{T}}$
is also invertible and 
\begin{align}
\left(\boldsymbol{\Phi}\boldsymbol{\Phi}^{\mathrm{T}}\right)^{\dagger} & =\left[D\boldsymbol{\phi}(\mathbf{0})\mathbf{T}_{E}\boldsymbol{\Xi}\boldsymbol{\Xi}^{\mathrm{T}}\mathbf{T}_{E}^{\mathrm{T}}\left[D\boldsymbol{\phi}(\mathbf{0})\right]^{\mathrm{T}}\right]^{-1}+\mathcal{O}\left(\left|\boldsymbol{H}\right|^{2},\left|\boldsymbol{H}\right|\left|\left(\boldsymbol{\Xi},\boldsymbol{H}\right)\right|^{1+\beta},\left|\left(\boldsymbol{\Xi},\boldsymbol{H}\right)\right|^{2+2\beta}\right).\label{eq:second step1}
\end{align}
 Then, by assumption (A3) of the theorem, we can write
\begin{equation}
\left(\boldsymbol{\Phi}\boldsymbol{\Phi}^{\mathrm{T}}\right)^{\dagger}=\left(\mathbf{T}_{E}^{\mathrm{T}}\left[D\boldsymbol{\phi}(\mathbf{0})\right]^{\mathrm{T}}\right)^{-1}\left(\boldsymbol{\Xi}\boldsymbol{\Xi}^{\mathrm{T}}\right)^{-1}\left(D\boldsymbol{\phi}(\mathbf{0})\mathbf{T}_{E}\right)^{-1}+\mathcal{O}\left(\left|\boldsymbol{H}\right|^{2},\left|\boldsymbol{H}\right|\left|\left(\boldsymbol{\Xi},\boldsymbol{H}\right)\right|^{1+\beta},\left|\left(\boldsymbol{\Xi},\boldsymbol{H}\right)\right|^{2+2\beta}\right).\label{eq:second step2}
\end{equation}

Substitution of formulas (\ref{eq:second step1})-(\ref{eq:second step2})
into eq. (\ref{eq:general solution of DMD problem}) gives
\begin{align*}
\boldsymbol{\mathcal{D}} & =\hat{\boldsymbol{\Phi}}\boldsymbol{\Phi}^{\mathrm{T}}\left(\boldsymbol{\Phi}\boldsymbol{\Phi}^{\mathrm{T}}\right)^{\dagger}=\hat{\boldsymbol{\Phi}}\boldsymbol{\Phi}^{\mathrm{T}}\left(\boldsymbol{\Phi}\boldsymbol{\Phi}^{\mathrm{T}}\right)^{-1}\\
 & =\left[D\boldsymbol{\phi}(0)\mathbf{T}_{E}e^{\boldsymbol{\Lambda}_{E}\Delta t}\boldsymbol{\Xi}\boldsymbol{\Xi}^{\mathrm{T}}\mathbf{T}_{E}^{\mathrm{T}}\left[D\boldsymbol{\phi}(\mathbf{0})\right]^{\mathrm{T}}+\mathcal{O}\left(\left|\boldsymbol{H}\right|^{2},\left|\boldsymbol{H}\right|\left|\left(\boldsymbol{\Xi},\boldsymbol{H}\right)\right|^{1+\beta},\left|\left(\boldsymbol{\Xi},\boldsymbol{H}\right)\right|^{2+2\beta}\right)\right]\\
 & \times\left[\left(\mathbf{T}_{E}^{\mathrm{T}}\left[D\boldsymbol{\phi}(\mathbf{0})\right]^{\mathrm{T}}\right)^{-1}\left(\boldsymbol{\Xi}\boldsymbol{\Xi}^{\mathrm{T}}\right)^{-1}\left(D\boldsymbol{\phi}(\mathbf{0})\mathbf{T}_{E}\right)^{-1}+\mathcal{O}\left(\left|\boldsymbol{H}\right|^{2},\left|\boldsymbol{H}\right|\left|\left(\boldsymbol{\Xi},\boldsymbol{H}\right)\right|^{1+\beta},\left|\left(\boldsymbol{\Xi},\boldsymbol{H}\right)\right|^{2+2\beta}\right)\right]\\
 & =D\boldsymbol{\phi}(0)\mathbf{T}_{E}e^{\boldsymbol{\Lambda}_{E}\Delta t}\left(D\boldsymbol{\phi}(\mathbf{0})\mathbf{T}_{E}\right)^{-1}+\mathcal{O}\left(\left|\boldsymbol{\Xi}\right|^{-2}\left|\boldsymbol{H}\right|^{2},\left|\boldsymbol{\Xi}\right|^{-2}\left|\boldsymbol{H}\right|\left|\left(\boldsymbol{\Xi},\boldsymbol{H}\right)\right|^{1+\beta},\left|\boldsymbol{\Xi}\right|^{-2}\left|\left(\boldsymbol{\Xi},\boldsymbol{H}\right)\right|^{2+2\beta}\right)\\
 & =D\boldsymbol{\phi}(0)\mathbf{T}_{E}e^{\boldsymbol{\Lambda}_{E}\Delta t}\left(D\boldsymbol{\phi}(\mathbf{0})\mathbf{T}_{E}\right)^{-1}+\mathcal{O}\left(\left|\boldsymbol{\Xi}\right|^{\beta}\right),
\end{align*}
where we have use the second assumption in (A4) of the Theorem. This
completes the proof of Theorem \ref{thm: DMD}, given the order-of-magnitude
relations (\ref{eq:orders are equal}).
\begin{rem}
\label{rem:oscillatory modes}In systems where the slowest decaying
modes are oscillatory, the slow spectral subspace $E$ is always even
dimensional, as it is spanned by the real and imaginary parts of the
generalized eigenvectors of $\mathbf{A}$ that correspond to complex
conjugate pairs of eigenvalues. For such predominantly oscillatory
systems, therefore, the number $d$ of observables used in DMD has
to be an even number for assumption (A3) to hold.
\end{rem}
\begin{rem}
\label{rem:higher smoothness under nonresonance}In most applications
the dynamical system will have higher degree of smoothness, i.e.,
we will have $\mathbf{f}\in C^{r}$ for some \emph{$r\in\mathbb{N}^{+}\cup\left\{ \infty,a\right\} $,
}with $C^{a}$ referring to the space of analytic functions. Then,
under further nonresonance conditions of the spectrum of $\mathbf{A}$,
the spectral submanifold $\mathcal{W}(E)$ will also be of class $C^{r}$,
as we discuss in Section \ref{sec:DDL}. In those cases, for the approximate
topological equivalence holds in the statement of Theorem \ref{thm: DMD}
hold on the full domain of attraction of the $\mathbf{\mathbf{x}}=\mathbf{0}$
fixed point within $\mathcal{W}(E)$, as one deduces from the linearization-in-the-large
results of \citet{lan13} and \citet{kvalheim21}.\footnote{Note that these global linearization results rely on the existence
of a $C^{1}$ local linearization, which generally only exists for
class $C^{2}$ dynamical systems. Consequently, one cannot use the
available global linearization results within $\mathcal{W}(E)$ under
the general assumptions of Theorem \ref{thm: DMD}, which only guarantee
$C^{1}$ differentiability for the reduced flow in $\mathcal{W}(E)$.} In Theorem \ref{thm: DMD}, our objective was to give a minimal set
of conditions under which DMD can be justified as an approximate,
leading-order, $d$-dimensional reduced model for the original nonlinear
system. These minimal conditions only require hyperbolicity, i.e.,
robustness of the spectrum under perturbations, without insisting
on the lack of resonances in the spectrum. This is advantageous in
data-driven applications in which details of the spectrum are generally
not known.
\end{rem}
\begin{rem}
\label{rem:smoothness in infinite dimensions}As in the case of finite-dimensional
systems, linearization theorems guaranteeing higher-degree of smoothness
for the linearizing transformations are also available in infinite
dimensions, but these require various non-resonance conditions on
the spectrum of $\mathbf{A}$ (see, e.g., \citet{elbialy01}). In
Theorem \ref{thm:DMD-infinite-dim}, as in the finite-dimensional
case, our objective was to present a minimal set of assumptions under
which DMD can be justified without the detailed knowledge of the spectrum
of $\mathbf{A}$. 
\end{rem}
\begin{rem}
\label{rem:mixed mode SSM in infinite dimensions}We note that the
more general Theorem 1.5 of \citet{newhouse17} also shows $C^{1,\alpha}$
linearizability for a class of systems that have so called \emph{$\alpha-$hyperbolic}
(as opposed to stable hyperbolic) fixed points. This would enable
us to waive the requirement in Theorem \ref{thm:DMD-infinite-dim}
that $\mathbf{A}$ is a contraction and also allow for expanding directions
within the spectral subspace $E$, as we did in our remarks after
Theorems \ref{thm: DMD} and \ref{thm:DMD-maps}. The defining properties
of these $\alpha$-hyperbolic systems are, however, complicated to
verify and do not hold for general hyperbolic fixed points even in
finite-dimensional Banach spaces. For instance, the simple three-dimensional
example of \citet{hartman60} (eq. (8) in that reference) of a non-$C^{1}$
linearizable system has a hyperbolic fixed point that is not $\alpha$-hyperbolic
in the sense of \citet{newhouse17}. 
\end{rem}

\section{Necessity of assumptions (A2)-(A4) of Theorem \ref{thm: DMD}\label{sec:Necessity-of-assumptions}}

Assumption (A2) of Theorem \ref{thm: DMD} requires $C^{2}$ smoothness
for the dynamical system \eqref{eq:more specific nonlinear system}.
We now show on an example that if only $C^{1}$ smoothness holds for
the dynamical system, then the DMD approximation can be less accurate.
Consider the 1D system 
\begin{equation}
\dot{x}=-x+x^{1+\alpha}.\label{eq:smooth_nonsmooth}
\end{equation}

The origin $x=0$ is an asymptotically stable fixed point. For $\alpha>0$,
the system is $C^{1}$ at the origin, but the second derivative $\frac{\partial^{2}\dot{x}}{\partial x^{2}}=\alpha^{2}x^{\alpha-1}$
is singular at $x=0$ for $\alpha<1$. We compare the accuracy of
a simple DMD prediction for $\alpha=1$, which corresponds to $C^{2}$
smoothness and for $\alpha=0.9$, which corresponds to only $C^{1}$-smoothness
at $x=0$. The DMD prediction errors can be seen in Fig. \ref{fig:assumption1}
as a function of the initial distance of the trajectory from the origin.
Although the $C^{1}$-smooth system is close to the $C^{2}$system,
a large difference can be observed in the prediction accuracy of DMD
asymptotically as $x\to0$. 

\begin{figure}
\centering{}\includegraphics[width=0.5\textwidth]{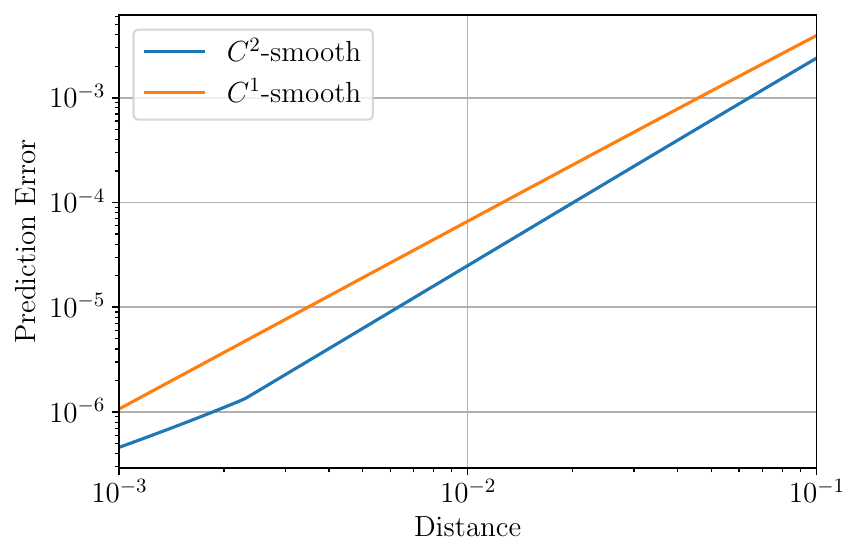}\caption{Prediction accuracy of DMD on $C^{1}-$ and $C^{2}-$smooth systems
\eqref{eq:smooth_nonsmooth}. The orange curve shows the accuracy
of DMD for $\alpha=0.9$, while the blue curve corresponds to $\alpha=1$.
\label{fig:assumption1}}
\end{figure}

Assumption (A3) of Theorem \ref{thm: DMD} requires a specific nondegeneracy
condition to hold for the observable in order for DMD to give a meaningful
approximation. We now illustrate on an example what happens if this
nondegeneracy condition is not satisfied. Consider the chain of nonlinear
oscillators in Section \ref{subsec:Oscillator-chain}. Let us denote
the normal modes of the mechanical system \eqref{eq:oscillator chain second order}
linearized around the stable fixed point $\mathbf{q=0},$ as $\hat{\mathbf{q}}_{j}(t)=\hat{\mathbf{q}}_{j}(0)e^{\lambda_{j}t}$,
where the mode shapes $\hat{\mathbf{q}}_{j}(0)\in\mathbb{R}^{5}$
satisfy
\[
\left(\lambda_{j}^{2}\mathbf{M}+\lambda_{j}\mathbf{C}+\mathbf{K}\right)\hat{\mathbf{q}}_{j}(0)=0.
\]

Therefore, in the phase space spanned by the variable
\[
\mathbf{x}=\left(q_{1},q_{2},...,q_{5},\dot{q_{1}},\dots,\dot{q}_{5}\right)\in\mathbb{R}^{10},
\]
 the eigenvectors of the linear part of \eqref{eq:oscillator chain second order}are
given as 
\[
\mathbf{e}_{j}=\left(\hat{\mathbf{q}}_{j}(0),\lambda_{j}\hat{\mathbf{q}}_{j}(0)\right)^{T}.
\]

With our choice of parameters, the slowest pair of eigenvalues are
\[
\lambda_{1,2}=-0.0012\pm i0.2825,
\]

and hence there exists a 2D slow SSM tangent to the slow spectral
subspace 
\[
E=\text{span}\left([\text{Re}(\mathbf{e}_{1}),\text{Im}(\mathbf{e}_{1})]\right)=\text{span}\left(\left[\hat{\mathbf{q}}_{1}(0),\text{Re}\left(\lambda_{1}\right)\hat{\mathbf{q}}_{1}(0)\right],\left[0,\text{Im}\left(\lambda_{1}\right)\hat{\mathbf{q}}_{1}(0)\right]\right),
\]
where $\mathbf{e}_{1}$ is the eigenvector corresponding to $\lambda_{1}$. 

As the dimension of $\mathcal{W}\left(E\right)$ is $d=2$, we consider
three possible 2D observables with respect to which Theorem \ref{thm: DMD}
could potentially be applied:

\[
\boldsymbol{\phi}_{1}(\mathbf{x})=\left(\begin{array}{c}
\langle\text{Re}(\mathbf{e}_{1}),\mathbf{x}\rangle\\
\langle\text{Im}(\mathbf{e}_{1}),\mathbf{x}\rangle
\end{array}\right)\quad\boldsymbol{\phi}_{2}(\mathbf{x})=\left(\begin{array}{c}
q_{1}\\
\dot{q}_{1}
\end{array}\right)\quad\boldsymbol{\phi}_{3}(\mathbf{x})=\left(\begin{array}{c}
q_{1}\\
q_{2}
\end{array}\right).
\]

The Jacobians of these observables with respect to $x$ can be computed
as
\begin{align*}
D\boldsymbol{\phi}_{1}(\mathbf{0}) & =\left(\text{Re}(\mathbf{e}_{1})\,\text{Im}(\mathbf{e}_{1})\right)^{\mathrm{T}}=\left(\begin{array}{ccccc}
* & * & * & \cdots & *\\
0 & 0 & 0 & \cdots & *
\end{array}\right),\\
D\boldsymbol{\phi}_{2}(\mathbf{0}) & =\left(\begin{array}{ccccc}
1 & 0 & 0 & \cdots & 0\\
0 & 1 & 0 & \cdots & 0
\end{array}\right),\\
D\boldsymbol{\phi}_{3}(\mathbf{0}) & =\left(\begin{array}{ccccc}
1 & 0 & 0 & \cdots & 0\\
0 & 0 & 1 & \cdots & 0
\end{array}\right),
\end{align*}
where $*$ denotes nonzero entries. Therefore, we require\emph{
\begin{equation}
\mathrm{rank\,}\left[D\boldsymbol{\phi}_{1,2,3}\left(\mathbf{0}\right)\vert_{E}\right]=2.\label{eq:nondegeneracy of observables for DMD1-2}
\end{equation}
}The restriction of the Jacobians of the observables to $E$ is $D\boldsymbol{\phi}_{i}(\mathbf{0})[\text{Re}(\mathbf{e}_{1}),\text{Im}(\mathbf{e}_{1})]$.
It can be seen from the structure of the matrices, that for $\boldsymbol{\phi}_{1}$
and $\boldsymbol{\phi}_{2}$, the rank of the restricted Jacobian
is 2, but for $\boldsymbol{\phi}_{3}$, it is only of rank 1. Therefore,
the observable vector $\boldsymbol{\phi}_{3}$ cannot be used to parametrize
the SSM and to derive a reduced-order model. 

Figures \ref{fig:osc_chain_ssm}a-b show that observing the dynamics
via $\boldsymbol{\phi}_{1}(x)$ and $\boldsymbol{\phi}_{2}(x)$ indeed
results in a successful embedding of the dynamics on $\mathcal{W}\left(E\right)$
near the fixed point. At the same time, Fig. \ref{fig:osc_chain_ssm}c
shows that using the observable $\boldsymbol{\phi}_{3}(x)$ does not
correctly reproduce the dynamics near the origin.

\begin{figure}
\centering{}\includegraphics[width=0.99\textwidth]{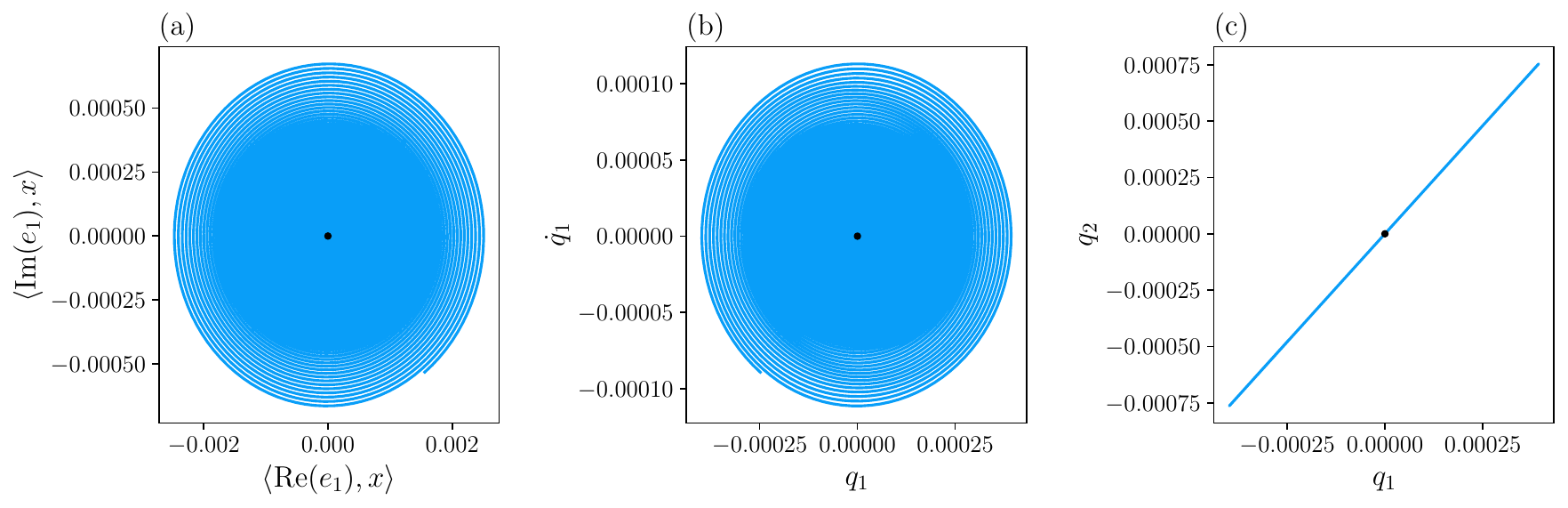}\caption{The dynamics near the $\mathbf{x}=\mathbf{0}$ point on the slow 2D
SSM $\mathcal{W}\left(E\right)$ of system \eqref{eq:oscillator chain second order}
represented via different observables (a) Observable $\boldsymbol{\phi}_{1}(\mathbf{x})$
(b) Observable $\boldsymbol{\phi}_{2}(\mathbf{x})$ (c) Observable
$\boldsymbol{\phi}_{3}(\mathbf{x})$.\label{fig:osc_chain_ssm}}
\end{figure}

Assumption (A4) of Theorem \ref{thm: DMD} states that the collected
data has to be rich enough to a neighborhood of the SSM near the fixed
point. To illustrate this, we modify the linear system in \eqref{eq:3d_lin_system1}
to have three real eigenvalues by letting $\boldsymbol{\Lambda}=\text{diag}\left(a,b,c\right)\in\mathbb{R}^{3}$.
We then collect the resulting trajectories in the phase space in the
observable data matrix 
\[
\boldsymbol{\Phi}=\left(\begin{array}{cccc}
x_{1}(0) & ax_{1}(0) &  & a^{n}x_{1}(0)\\
x_{2}(0) & bx_{2}(0) & \cdots & b^{n}x_{2}(0)\\
x_{3}(0) & cx_{3}(0) &  & c^{n}x_{3}(0)
\end{array}\right).
\]

Assuming $c<b<a<0$, we seek to identify from the data matrix $\Phi$
the 2D slow eigenspace $\mathcal{W}\left(E\right)=E$ corresponding
to the eigenvalues $a$ and $b$. Assume now that $x_{2}(0)=x_{3}(0)=0$
holds. In this case, the second and third rows of $\Phi$ are identically
zero and hence its rank is only 1, which violates assumption (A4)
of Theorem \ref{thm: DMD}. Indeed, the trajectory observed in $\boldsymbol{\Phi}$
is restricted to the slowest 1D eigenspace $\mathcal{W}\left(E_{1}\right)=E_{1}$
corresponding to the eigenvalue $a$ and hence cannot be used to approximate
the shape and internal dynamics of $\mathcal{W}\left(E\right)$. 

In practice, a generic trajectory of a nonlinear system would not
be restricted to a lower-dimensional invariant subspace and hence
the data matrix $\Phi$ would have full rank with probability one.
However, if the available data is close to a lower-dimensional invariant
subspace, the condition number of the data matrix may become prohibitively
large. So a near-failure of assumption (A4) is already detrimental
to the quality of the results obtained from DMD.

\section{Proof of Theorem \ref{thm:DDL}\label{sec:Proof-of-Theorem-DDL}}

For $\mathbf{f}_{2}\in C^{r}$ and under the nonresonance conditions
\begin{equation}
\lambda_{k}\neq\sum_{j=1}^{n}m_{j}\lambda_{j},\qquad j,k=1,\ldots,n,\qquad\sum_{j=1}^{n}m_{j}\geq2,\label{eq:nonresonance condition2-2-1}
\end{equation}
the $C^{\infty}$ linearization theorems of \citet{sternberg57} and
\citet{poincare1892} guarantee the local existence of a near-identity,
linearizing change of coordinates of the form 
\begin{equation}
\mathbf{x}=\mathbf{y}+\mathbf{h}(\mathbf{y}),\qquad\mathbf{y}\in U\subset\mathbb{R}^{n},\qquad\mathbf{h}(\mathbf{y}),\mathbf{h}^{-1}(\mathbf{y})=\mathcal{O}\left(\left|\mathbf{y}\right|^{2}\right),\label{eq:near-identity linearizing transformation}
\end{equation}
with $\mathbf{h}\in C^{r}$. Additionally, a strengthening of Sternberg's
linearization theorem by \citet{kvalheim21}\footnote{This generalizes the results of \citet{lan13} in the setting of the
$C^{1}$ linearization theorem by \citet{hartman60}.} implies that the transformation in eq. (\ref{eq:near-identity linearizing transformation})
is unique and globally defined on the full domain of attraction of
the fixed point at the origin. As pointed out by \citet{haller23},
this implies the existence of a unique $d$-dimensional, normally
attracting spectral submanifold $\mathcal{W}(E)\in C^{\infty}$ for
system \eqref{eq:more specific nonlinear system} whose internal dynamics
govern the longer-term behavior of all trajectories near the origin.\footnote{\citet{cabre03} guarantees the existence and uniqueness of $\mathcal{W}(E)\in C^{\infty}$
under weaker nonresonance conditions that allow resonances inside
$E$ but no resonances of the form (\ref{eq:nonresonance condition2-2-1})
with $\lambda_{k}\in\mathrm{Spect}\,\left(A\vert_{F}\right)$ and
$\lambda_{j}\in\mathrm{Spect}\,\left(\mathbf{A}\vert_{E}\right).$
We do not rely on these results here, as the next step in our construction
is local linearization within $\mathcal{W}(E)$, which cannot be carried
out in the presence of resonances inside $E$. } 

By the main assumption of the theorem, we have $\mathrm{Re}\text{\ensuremath{\lambda_{1}<0}}$.
Under this assumption, the infinitely many nonresonance conditions
in (\ref{eq:nonresonance condition2-2-1}) simplify to finitely many.
Indeed, if we define the spectral quotient as the positive integer
$Q$ given by
\[
Q=\left\lfloor \frac{\max_{i}\left|\mathrm{Re}\,\lambda_{i}\right|}{\min_{i}\left|\mathrm{Re}\,\lambda_{i}\right|}\right\rfloor +1,
\]
 Then, for all $m_{j}\in\mathbb{N}$ with $\sum_{j=1}^{n}m_{j}>Q$,
we have the estimate
\begin{align*}
\sum_{j=1}^{n}m_{j}\mathrm{Re}\,\lambda_{j} & \leq-\sum_{j=1}^{n}m_{j}\min_{i}\left|\mathrm{Re}\,\lambda_{i}\right|<-\sum_{j=1}^{n}m_{j}\frac{\max_{i}\left|\mathrm{Re}\,\lambda_{i}\right|}{Q}=-\max_{i}\left|\mathrm{Re}\,\lambda_{i}\right|\frac{\sum_{j=1}^{n}m_{j}}{Q}\\
 & <-\max_{i}\left|\mathrm{Re}\,\lambda_{i}\right|,
\end{align*}
implying 
\begin{equation}
\sum_{j=1}^{n}m_{j}\mathrm{Re}\,\lambda_{j}<\mathrm{Re}\,\lambda_{k},\qquad k=1,\ldots,n,\quad\sum_{j=1}^{n}m_{j}>Q.\label{eq:nonresonance in W(E)-0}
\end{equation}
Consequently, in this case, no integer resonance among the eigenvalues
of $\mathbf{A}$ can occur for $\sum_{j=1}^{n}m_{j}>Q$ and hence
the nonresonance condition (\ref{eq:nonresonance condition2-2-1})
can be relaxed to assumption (A1) of Theorem \ref{thm:DDL}. 

Based on these results, we apply the $C^{r}$ linearizing transformation
(\ref{eq:near-identity linearizing transformation}) followed by a
linear change of coordinates to $\left(\boldsymbol{\xi},\boldsymbol{\eta}\right)$uses
in the proof of Theorem \ref{thm: DMD} (see eq. (\ref{eq:main transformation})).
As se have seen there, this coordinate change maps the spectral submanifold
$\mathcal{W}(E)$ into the $\boldsymbol{\eta}=\mathbf{0}$ plane.
On $\mathcal{W}(E)$, the reduced dynamics obeys the linear ODE 
\begin{equation}
\dot{\boldsymbol{\xi}}=\boldsymbol{\Lambda}_{E}\boldsymbol{\xi},\qquad\boldsymbol{\Lambda}_{E}=\left(\mathbf{T}^{-1}\mathbf{A}\mathbf{T}\right)\vert_{E},\label{eq:linearized dynamics on W(E)}
\end{equation}
where $\boldsymbol{\Lambda}_{E}$, the real Jordan canonical form
of $\mathbf{A}\vert_{E},$ has eigenvalues $\lambda_{1},\ldots,\lambda_{d}$. 

Restricted to $\mathcal{W}(E),$ we can express the $d$-dimensional
observable vector $\boldsymbol{\phi}(\mathbf{x})$ as 
\begin{equation}
\boldsymbol{\varphi}(\boldsymbol{\xi})=\boldsymbol{\phi}\left(\mathbf{T}\left(\begin{array}{c}
\boldsymbol{\xi}\\
\mathbf{0}
\end{array}\right)+\mathbf{h}\left(\begin{array}{c}
\boldsymbol{\xi}\\
\mathbf{0}
\end{array}\right)\right)\in\mathbb{R}^{d}.\label{eq:restricted observable}
\end{equation}
On the manifold $\mathcal{W}(E)$, we can use $\boldsymbol{\varphi}$
locally near the origin as a new variable if the coordinate change
from $\boldsymbol{\xi}$ to $\boldsymbol{\varphi}$ is a $C^{r}$
diffeomorphism on $\mathcal{W}(E)$, i.e., $D_{\boldsymbol{\xi}}\boldsymbol{\varphi}$
is nonsingular at the origin. To see if this is the case, note that
\[
D_{\boldsymbol{\xi}}\boldsymbol{\psi}(\boldsymbol{\xi})=D\boldsymbol{\phi}\left(\mathbf{T}\left(\begin{array}{c}
\boldsymbol{\xi}\\
\mathbf{0}
\end{array}\right)\right)\mathbf{T}\left(\begin{array}{c}
\mathbf{I}_{d\times d}\\
\mathbf{0}
\end{array}\right),
\]
which gives
\[
D_{\boldsymbol{\xi}}\boldsymbol{\gamma}(\mathbf{0})=D\boldsymbol{\phi}\left(\mathbf{0}\right)\mathbf{T}\left(\begin{array}{c}
\mathbf{I}_{d\times d}\\
\mathbf{0}
\end{array}\right)=D\boldsymbol{\phi}\left(\mathbf{0}\right)\mathbf{T}_{E},
\]
with the operator $\mathbf{T}_{E}$ defined in (\ref{eq:Tdef}). By
assumption (\ref{eq:nondegeneracy of observables for DMD-1-2}) of
the theorem, we have 
\begin{equation}
\det\,\left[D\boldsymbol{\phi}\left(\mathbf{0}\right)\mathbf{T}_{E}\right]\neq0,\label{eq:nondegeneracy for implicit function theorem}
\end{equation}
and hence, by the inverse function theorem, $\boldsymbol{\xi}$ can
be expressed from the relationship (\ref{eq:restricted observable})
near the origin as a smooth function of $\boldsymbol{\varphi}$. Specifically,
there exists a $C^{r}$ diffeomorphism $\boldsymbol{\psi}\colon\mathbb{R}^{d}\to\mathbb{R}^{d}$
such that 
\begin{equation}
\boldsymbol{\xi}=\boldsymbol{\psi}(\boldsymbol{\varphi}),\qquad\boldsymbol{\psi}(\boldsymbol{0})=\boldsymbol{0},\quad D_{\boldsymbol{\varphi}}\boldsymbol{\psi}(\boldsymbol{\varphi})=\left[D_{\boldsymbol{\xi}}\boldsymbol{\varphi}(\boldsymbol{\xi})\right]^{-1}.\label{eq:G_def}
\end{equation}
Differentiating the first equation in (\ref{eq:G_def}), substituting
eq. (\ref{eq:linearized dynamics on W(E)}), using the relations (\ref{eq:G_def})
and Taylor expanding the result around $\boldsymbol{\varphi}=\mathbf{0}$,
we obtain the reduced dynamics on $\mathcal{W}(E)$ in the form
\begin{align}
\dot{\boldsymbol{\varphi}} & =\left[D_{\boldsymbol{\varphi}}\boldsymbol{\psi}(\boldsymbol{\varphi})\right]^{-1}\boldsymbol{\Lambda}_{E}\boldsymbol{\psi}(\boldsymbol{\varphi})\nonumber \\
 & =D_{\boldsymbol{\xi}}\boldsymbol{\varphi}(\mathbf{0})\boldsymbol{\Lambda}_{E}\left[D_{\boldsymbol{\xi}}\boldsymbol{\varphi}(\mathbf{0})\right]^{-1}\boldsymbol{\varphi}+\mathbf{q}\left(\boldsymbol{\varphi}\right),\qquad\mathbf{q}\left(\boldsymbol{\varphi}\right)=o\left(\left|\boldsymbol{\varphi}\right|\right)\label{eq:DMD nonlinear system}
\end{align}
 for some $C^{r}$ function $\mathbf{q}\colon U\subset\mathbb{R}^{d}\to\mathbb{R}^{d}$
defined in a neighborhood of $\boldsymbol{\varphi}=\mathbf{0}$. This
proves statement (i) of the theorem.

We have seen that $D_{\boldsymbol{\xi}}\boldsymbol{\varphi}(\mathbf{0})$
is nonsingular and hence the linear part of the nonlinear ODE in (\ref{eq:DMD nonlinear system})
has the same eigenvalues as $\boldsymbol{\Lambda}_{E}$ does.  $\left(\mathbf{T}^{-1}\mathbf{A}\mathbf{T}\right)\vert_{E}$.
Consequently, the origin of system (\ref{eq:linearized observer dynamics})
is a hyperbolic fixed point. If the nonresonance conditions (\ref{eq:nonresonance condition2-2})
hold for the full spectrum of $\mathbf{A}$, then they also hold for
any subset of this spectrum, i.e., for the eigenvalues of $\boldsymbol{\Lambda}_{E}$
as well. Consequently, from the linearization theorems of \citet{sternberg57}
and \citet{kvalheim21}, we can also deduce a similar near-identity,
$C^{r}$ linearizing diffeomorphism 
\[
\boldsymbol{\varphi}=\boldsymbol{\gamma}+\boldsymbol{\ell}(\boldsymbol{\gamma}),\qquad\boldsymbol{\ell}(\boldsymbol{\gamma})=o\left(\left|\boldsymbol{\gamma}\right|\right),\quad\boldsymbol{\ell}\in C^{r},
\]
for \emph{$r\in\mathbb{N}^{+}\cup\left\{ \infty,a\right\} $} under
the nonresonance assumption (\ref{eq:nonresonance condition2-2}).
In the new $\boldsymbol{\gamma}$ coordinates, the dynamics on the
SSM $\mathcal{W}\left(E\right)$ is governed by the linear ODE
\begin{equation}
\dot{\boldsymbol{\gamma}}=D_{\boldsymbol{\xi}}\boldsymbol{\gamma}(\mathbf{0})\boldsymbol{\Lambda}_{E}\left[D_{\boldsymbol{\xi}}\boldsymbol{\gamma}(\mathbf{0})\right]^{-1}\boldsymbol{\gamma},\label{eq:linearized observer dynamics}
\end{equation}
 which proves statement (ii) of the theorem. 

The partial differential equation (\ref{eq:invariance PDE}) can then
be obtained by differentiating eq. (\ref{eq:linearizing transformation on W(E)})
and substituting eqs. (\ref{eq:gamma ODE}) and (\ref{eq:linearized flow on SSM})
into the resulting equation. The expansions (\ref{eq:DDL transformation formal})
and (\ref{eq:DDL transformation convergent}) then follow from the
guaranteed smoothness class of the linearizing transformation for
general $r$ and for $r=a$, respectively. This concludes the proof
of statement (iii) of the theorem.

\section{Reduced dynamics on periodically forced SSMs \label{sec:Proof-of-Theorem-DDL-forced}}

Let us denote the time-$T$ map for the periodically forced system
by $\mathbf{P}_{\varepsilon}^{T}$. Since $\mathbf{P}_{0}^{T}$ represents
the period-$T$ sampling of the flow map of the unperturbed system,
it has a fixed point at $\mathbf{x}=\mathbf{0}$. By the implicit
function theorem, $\mathbf{P}_{\varepsilon}^{T}$ also has a fixed
point $O(\varepsilon)-$close to $\mathbf{x}=\mathbf{0}$. This corresponds
to a periodic orbit of \eqref{eq:forced system} close to the origin.
As explained by \citet{haller23}, the $C^{\infty}$ linearization
results of \citet{sternberg57} and \citet{poincare1892} also establish
the smoothness of the linearizations in $\varepsilon.$ Applied to
the discrete dynamical system defined by the Poincaré-map, these results
guarantee the existence of a linearizing transformation of the form

\[
\mathbf{x}=\mathbf{y}+\mathbf{h}^{\varepsilon}(\mathbf{y}),\qquad\mathbf{y}\in U\subset\mathbb{R}^{n},\qquad\mathbf{h}(\mathbf{y}),\mathbf{h}^{-1}(\mathbf{y})=\mathcal{O}\left(\left|\mathbf{y}\right|^{2},\varepsilon|\mathbf{y}|^{2}\right),
\]
 with $\mathbf{h}\in C^{r}$ in both $\mathbf{y}$ and $\varepsilon$. 

We now apply the $\varepsilon$-dependent linearizing transformation
followed by the linear change of coordinates to the coordinates $\left(\boldsymbol{\xi},\boldsymbol{\eta}\right)$
introduced in the proof of Theorem \ref{thm:DDL}. Specifically, let
\begin{equation}
\mathbf{x}=\mathbf{Ty}+\mathbf{h}^{\varepsilon}(\mathbf{Ty}),\quad\mathbf{y}=\left(\boldsymbol{\xi},\boldsymbol{\eta}\right)^{\mathrm{T}}\in\mathbb{R}^{d}\times\mathbb{R}^{n-d},\label{eq:transformation_epsilon}
\end{equation}
with the matrix $\mathbf{T}$ defined in formula (\ref{eq:Tdef}),
i.e., containing eigenvalues of $\mathbf{A}$. As the change of coordinates
(\ref{eq:transformation_epsilon}) is smooth in $\varepsilon$, it
can be written as $\mathbf{x}=\mathbf{T}\mathbf{y}+\mathbf{h}^{0}(\mathbf{T}\mathbf{y})+O(|y|^{2}\varepsilon).$
Differentiation of \eqref{eq:transformation_epsilon} with respect
to time yields
\[
\dot{\mathbf{x}}=\left(\mathbf{T}+D\mathbf{h}^{\varepsilon}(\mathbf{T}\mathbf{y})\mathbf{T}\right)\dot{\mathbf{y}}=\mathbf{A}(\mathbf{Ty}+\mathbf{h}^{\varepsilon}(\mathbf{Ty}))+\tilde{\mathbf{f}}(\mathbf{Ty}+\mathbf{h}^{\varepsilon}(\mathbf{Ty}))+\varepsilon\mathbf{F}(\mathbf{x}+h^{\varepsilon}(\mathbf{Ty}),t),
\]
which can be rearranged as
\begin{align}
\dot{\mathbf{y}} & =\left(\mathbf{T}+D\mathbf{h}^{0}(\mathbf{Ty})\mathbf{T}+O(|\mathbf{y}|\varepsilon)\right){}^{-1}\left[\mathbf{A}\left(\mathbf{T}\mathbf{y}+\mathbf{h}^{0}(\mathbf{Ty})+O(|\mathbf{y}|^{2}\varepsilon)\right)\right]\label{eq:epsilon_linearization_transformed}\\
 & +\left(\mathbf{T}+D\mathbf{h}^{0}(\mathbf{Ty})\mathbf{T}+O(|\mathbf{y}|\varepsilon)\right){}^{-1}\left[\tilde{\mathbf{f}}(\mathbf{Ty}+\mathbf{h}^{0}(\mathbf{Ty})+O(|\mathbf{y}|^{2}\varepsilon))+\varepsilon\mathbf{F}(\mathbf{Ty}+\mathbf{h}^{0}(\mathbf{Ty}),t)\right].\nonumber 
\end{align}
Since $y+h^{0}(y)$ linearizes \eqref{eq:forced system} for $\varepsilon=0$,
\eqref{eq:epsilon_linearization_transformed} simplifies to
\begin{equation}
\dot{\mathbf{y}}=\boldsymbol{\Lambda}\mathbf{y}+\left(\mathbf{T}+D\mathbf{h}^{0}(\mathbf{Ty})\mathbf{T}\right){}^{-1}\varepsilon\mathbf{F}(\mathbf{0},t)+O(|\mathbf{y}|^{2}\varepsilon,\mathbf{y}\varepsilon^{2}),\quad\mathbf{y}=(\boldsymbol{\xi},\boldsymbol{\eta})^{\mathrm{T}}.\label{eq:linearized dynamics with epsilon}
\end{equation}
The smoothest SSM $\mathcal{W_{\varepsilon}}(E,t)$ of the periodic
orbit is $O(\varepsilon)-$close to that of the fixed point and can
be written as 
\[
\mathcal{W_{\varepsilon}}(E,t)=\left(\begin{array}{c}
\boldsymbol{\xi}\\
O(\varepsilon)
\end{array}\right).
\]

We now neglect the small, time-dependent correction to the $\boldsymbol{\eta}$-component
of $\mathcal{W_{\varepsilon}}(E,t).$ The reduced dynamics can then
be written as the projection of \eqref{eq:linearized dynamics with epsilon}
to the $\boldsymbol{\eta}=0$ plane, which becomes 
\begin{equation}
\dot{\boldsymbol{\xi}}=\boldsymbol{\Lambda}_{E}\boldsymbol{\xi}+\left(\mathbf{T}+D\mathbf{h}^{0}(\mathbf{Ty})\mathbf{T}\right){}^{-1}\vert_{E}\varepsilon\mathbf{F}(0,t)+O(|\mathbf{y}|^{2}\varepsilon,y\varepsilon^{2}),\qquad\boldsymbol{\Lambda}_{E}=\left(\mathbf{T}^{-1}\mathbf{AT}\right)\vert_{E}.\label{eq:linearized dynamics with epsilon xi}
\end{equation}

The restriction of the observable vector $\boldsymbol{\phi}$ to the
subspace $E$ can be written as $\boldsymbol{\varphi}(\boldsymbol{\xi}).$
Repeating the arguments of \ref{sec:Proof-of-Theorem-DDL}, the change
of coordinates $\boldsymbol{\xi}=\boldsymbol{\psi}(\boldsymbol{\varphi})$
can be carried out, which transforms \eqref{eq:linearized dynamics with epsilon xi}
to
\begin{align}
\dot{\boldsymbol{\varphi}} & =D_{\boldsymbol{\xi}}\boldsymbol{\varphi}(\mathbf{0})\boldsymbol{\Lambda}_{E}\left[D_{\boldsymbol{\xi}}\boldsymbol{\varphi}(\mathbf{0})\right]^{-1}\boldsymbol{\varphi}+\mathbf{q}\left(\boldsymbol{\varphi}\right)+\varepsilon\left\{ \left[\mathbf{I}+D\mathbf{h}^{0}\left(D_{\boldsymbol{\xi}}\boldsymbol{\varphi}(\mathbf{0})\right)^{-1}\right]\mathbf{T}_{E}\right\} ^{-1}\mathbf{F}(\mathbf{0},t)+O(|\boldsymbol{\varphi}|^{2}\varepsilon,\left|\boldsymbol{\varphi}\right|\varepsilon^{2}),\label{eq:linearized dynamics with epsilon gamma}\\
\mathbf{q}\left(\boldsymbol{\varphi}\right) & =o\left(\left|\boldsymbol{\varphi}\right|\right),\nonumber 
\end{align}
 which proves formula (\ref{eq:gamma ODE-1}).

\section{Forced response from approximate DDL \label{sec:approximate ddl}}

As stated in Section \ref{subsec:predicting forced response from DDL},
the linearizing change of coordinates $\boldsymbol{\varphi}=\boldsymbol{\kappa}\left(\boldsymbol{\gamma}\right)=\boldsymbol{\gamma}+\boldsymbol{\ell}(\boldsymbol{\gamma})$
transforms the reduced dynamics of the forced system \eqref{eq:linearized dynamics with epsilon gamma}
to the form $\dot{\boldsymbol{\gamma}}=\mathbf{B}\boldsymbol{\gamma}+\varepsilon(\mathbf{I}+D\boldsymbol{\ell}(\boldsymbol{\gamma}))^{-1}\hat{\mathbf{F}}(\mathbf{0},t).$
Therefore, a trajectory of the forced system may be computed by integrating
\eqref{eq:linearized flow on SSM-1} and transforming to the reduced
coordinates $\boldsymbol{\varphi}$ as
\[
\boldsymbol{\varphi}(t)=\boldsymbol{\gamma}\left(t\right)+\boldsymbol{\ell}\left(\boldsymbol{\gamma}(t)\right).
\]

An approximate version of \eqref{eq:linearized flow on SSM-1} may
be obtained by assuming \eqref{eq:approximate ddl assumption}, i.e.,
$\varepsilon(\mathbf{I}+D\boldsymbol{\ell}(\boldsymbol{\gamma}))^{-1}=\varepsilon\mathbf{I}+O\left(\varepsilon|\boldsymbol{\gamma}|\right)\approx\varepsilon\mathbf{I}.$
This results in the forced linear system
\begin{equation}
\dot{\boldsymbol{\gamma}}=\mathbf{B}\boldsymbol{\gamma}+\varepsilon\hat{\mathbf{F}}\left(\mathbf{0},t\right).\label{eq:approximate_ddl_dynamics}
\end{equation}

However, we must keep in mind that the $O\left(\varepsilon|\gamma|\right)$
terms are already retained when we compute $\boldsymbol{\varphi}(t)=\boldsymbol{\gamma}(t)+\boldsymbol{\ell}\left(\boldsymbol{\gamma}(t)\right)$,
therefore terms of the same order must also be retained in the dynamics
\eqref{eq:linearized flow on SSM-1}. Nevertheless, assumption \eqref{eq:approximate ddl assumption}
makes the dynamics \eqref{eq:approximate_ddl_dynamics} an inhomogeneous
linear system of ODEs, which is simple to solve. We refer to DDL under
assumption \eqref{eq:approximate ddl assumption} as Approximate DDL.
We compare forced response predictions of the analytic linearization,
DDL, Approximate DDL and DMD on the forced-damped Duffing system discussed
in Section \ref{subsec:Duffing}. 

In Fig. \ref{fig:appendix_duffing}, we repeat the results already
presented in Fig. \ref{fig:duffing_2}, which should now be compared
to the forced response predicted using Approximate DDL. Panel (a)
shows that although Approximate DDL is accurate only for low amplitudes,
it clearly outperforms the DMD-model. For higher amplitudes, predictions
are not accurate, since the forced linear system \eqref{eq:approximate_ddl_dynamics},
by construction, cannot show nonlinear behavior. In contrast, the
analytic linearization and the full DDL continue to yield accurate
predictions for the forced response, even in the nonlinear regime. 

\begin{figure}
\begin{centering}
\includegraphics[width=0.8\textwidth]{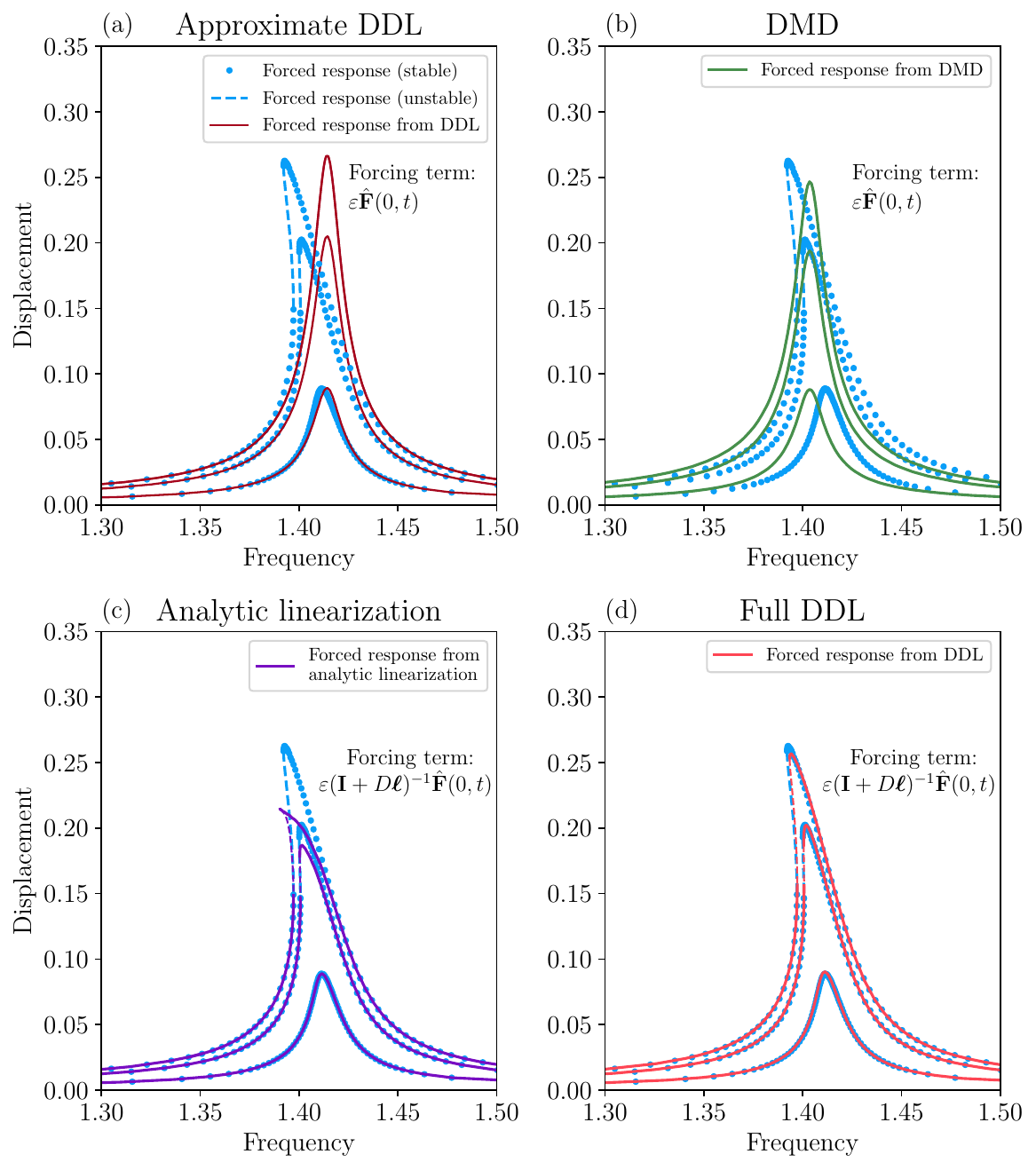}\caption{Periodic response of the forced Duffing oscillator \eqref{eq:duffing_transformed}.
Panel (a) shows DDL-predictions under the simplifying assumption \eqref{eq:approximate ddl assumption}.
Panels (b)-(d) are the same as Fig. \ref{fig:duffing_2}.\label{fig:appendix_duffing}}
\par\end{centering}
\end{figure}

\end{document}